\def\O{\Omega}
\renewcommand\sp{\mathop{\mathrm{Sp}}\nolimits}
\newcommand{\set}[1]{\lbrace #1 \rbrace}
\newcommand{\norm}[1]{\lVert#1\rVert}
\newcommand\bu{\boldsymbol{u}}
\newcommand\bv{\boldsymbol{v}}
\newcommand\bw{\boldsymbol{w}}
\newcommand\br{\boldsymbol{r}}
\newcommand\bs{\boldsymbol{s}}
\def\hdel{\widehat{\delta}}
\newcommand\bT{\boldsymbol{T}}
\newcommand\bQ{\boldsymbol{Q}}
\def\CT{{\mathcal T}}
\newcommand{\dd}{\texttt{d}}
\newcommand\bsig{\boldsymbol{\sigma}}
\newcommand\btau{\boldsymbol{\tau}}
\newcommand\R{\mathbb{R}}
\renewcommand\H{\mathrm{H}}
\renewcommand\L{\mathrm{L}}
\renewcommand\O{\Omega}
\newcommand\bdiv{\mathop{\mathbf{div}}\nolimits}
\renewcommand\div{\mathop{\mathrm{div}}\nolimits}
\newcommand\tr{\mathop{\mathrm{tr}}\nolimits}
\renewcommand\sp{\mathop{\mathrm{sp}}\nolimits}
\newcommand{\vertiii}[1]{{\left\vert\kern-0.25ex\left\vert\kern-0.25ex\left\vert #1 
    \right\vert\kern-0.25ex\right\vert\kern-0.25ex\right\vert}}
\crefname{hypothesis}{Hypothesis}{Hypotheses}
\title{A Mixed finite element  method for the velocity-pseudostress formulation of the Oseen eigenvalue problem\thanks{Submitted to the editors DATE.
\funding{The first author was partially supported by
	DIUBB through project 2120173 GI/C Universidad del B\'io-B\'io (Chile).
	The second author was supported by Universidad de Los Lagos Regular R02/21  and ANID-Chile through FONDECYT project 1231619 (Chile).  The third author was partially supported by the National Agency for Research and Development, ANID-Chile through FONDECYT Postdoctorado project 3230302, and by project Centro de Modelamiento Matemático (CMM), FB210005, BASAL funds for centers of excellence.
}}}
\author{Felipe Lepe\thanks{GIMNAP-Departamento de Matem\'atica, Universidad del B\'io - B\'io, Casilla 5-C, Concepci\'on, Chile. \email{flepe@ubiobio.cl}.}
\and Gonzalo Rivera\thanks{Departamento de Ciencias Exactas,
	Universidad de Los Lagos, Casilla 933, Osorno, Chile. \email{gonzalo.rivera@ulagos.cl}.}
\and Jesus Vellojin\thanks{GIMNAP-Departamento de Matem\'atica, Universidad del B\'io - B\'io, Casilla 5-C, Concepci\'on, Chile. \email{jvellojin@ubiobio.cl}.}}
\def\CT{{\mathcal T}}
\begin{document}

\maketitle

\begin{abstract}
In this paper, we introduce and analyze a mixed formulation for the Oseen eigenvalue problem by introducing the pseudostress tensor as a new unknown, allowing us to eliminate the fluid pressure. The well-posedness of the solution operator is established using a fixed-point argument. For the numerical analysis, we use the tensorial versions of Raviart-Thomas and Brezzi-Douglas-Marini elements to approximate the pseudostress, and piecewise polynomials for the velocity. Convergence and a priori error estimates are derived based on compact operator theory. We present a series of numerical tests in two and three dimensions to confirm the theoretical findings.
\end{abstract}

\begin{keywords}
	Non-symmetric eigenvalue problem, mixed formulations, Oseen equations, convergence, a priori error estimates
\end{keywords}

\begin{AMS}
	35Q35, 65N15, 65N25, 65N30, 65N50
\end{AMS}

\section{Introduction}\label{sec:intro}
Mixed formulations are an attractive technique for approximating solutions to partial differential equations, as they inherently allow for the introduction of additional variables with physical significance. A classic reference discussing the advantages of mixed methods is \cite{MR3097958}.

Eigenvalue problems associated with partial differential equations are well-known topic and their applications are documented in \cite{MR2652780}, which presents a comprehensive discussion on the analysis. However, the study of eigenvalue problems is continuously evolving in various contexts, including those arising in fluid mechanics.

Recently, in \cite{LEPE2024116959}, a finite element method for the Oseen eigenvalue problem was introduced, where the classic velocity-pressure formulation is analyzed. This paper presents, to the best of the authors' knowledge, the first effort to numerically analyze this spectral problem using inf-sup stable families of finite elements where the  analysis includes a priori and a posteriori error estimates. The article also reports computed eigenvalues and eigenfunctions in different domains and for various convective velocities, allowing for comparison with results obtained by other methods and formulations.

Inspired by the work of \cite{MR4307023} in the context of the Navier-Stokes equations, we continue our research program, now focusing on the Oseen eigenvalue problem. This variable, introduced in \cite{cai2010} has led to a large number of works associated to numerical methods to solve problems in continuum mechanics, particularly mixed formulations,  as \cite{MR3535625, MR4434148, MR4593742, MR3453481, MR4627698, MR4789346,MR4480275}. From these references, we observe that in flow problems, the pseudostress is dependent on both the gradient of the velocity and the pressure. Through certain algebraic manipulations, the pressure can be eliminated from the formulation, leading to a mixed problem that is expressed solely in terms of the velocity and the pseudostress. The introduction of the pseudostress tensor was initially motivated by its applications in source problems. this idea has been extended for elasticity problems. The introduction of the pseudostress tensor was initially motivated by its applications in source problems. Nevertheless, for eigenvalue problems, the incorporation of this unknown has been also  a suitable alternative to approximate the spectrum of some differential operators as is shown on  \cite{MR4570534,MR4077220,MR4430561,LRVSISC,MR4471016,MR4666864,MR3335223}, where classic mixed finite elements, virtual elements, and discontinuous Galerkin methods have been considered. These formulations have demonstrated accuracy in computing the respective spectra. The work of \cite{MR4307023} motivates us to continue exploring pseudostress formulations, now for the non-symmetric Oseen problem.

Let us focus on the particularities of the pseudostress formulation for the Oseen eigenvalue problem. For the mathematical analysis of  this formulation, particularly the well-posedness of the solution operator, we need to adopt a fixed-point strategy due to the presence of the convective term. This analysis requires certain conditions on the data, such as viscosity and convective velocity, to prove the necessary criteria for the fixed-point argument. Furthermore, the proposed solution operator is compact, leading to a spectral analysis based on the theory of \cite{MR1115235}. Given that the Oseen eigenvalue problem is non-selfadjoint, it is essential to include the adjoint problem in the analysis, incorporating the corresponding adjoint solution operators in both continuous and discrete versions for the spectral analysis. However, one drawback of utilizing the fixed point strategy is that it is not straightforward to obtain a desirable estimate of the dependency on the data for the source problem. This limitation extends naturally to the corresponding eigenfunctions. As a result, when proving convergence in norm, it becomes difficult to establish bounds that reflect the dependence on the source data. To circumvent this issue, an additional assumption is required, namely that the solutions of both the source and eigenvalue problems exhibit continuous dependence on the data of the source problem.

Since the proposed formulation is mixed, we use classic finite element families for this type of formulation, such as the Raviart-Thomas and Brezzi-Douglas-Marini elements, for the discretization. With these families, we prove the well-posedness of the discrete source problem to define the discrete solution operator correctly. With the aid of \cite{MR1115235}, we derive convergence and error estimates for the mixed scheme and achieve the desired spurious-free feature of the method.

The paper is organized as follows. In Section \ref{sec:model_problem}, we derive a velocity-pseudostress eigenvalue problem. The well-posedness of the source problem is studied by means of a fixed point argument. Due to the non-symmetric nature of the proposed problem, we also introduce the adjoint model along with its properties. Section \ref{sec:fem} is devoted to state the discrete version of the problem. The discrete scheme is approximated by the Raviart-Thomas and Brezzi-Douglas-Marini elements. Some approximation properties for these spaces are presented. We also report the well-posedness of the discrete primal and adjoint formulations, together with their corresponding convergence analysis an error estimates. We conclude with Section \ref{sec:numerics}, where several numerical experiments are performed to test the accuracy and robustness of the scheme.

\subsection{Notations and preliminaries}
Throughout this work, we will use notations that will allow a smoother reading of the content. Let us set these notations. Given $n \in\{2,3\}$, we denote $\mathbb{C}^{n \times n}$ as  the space of square real matrices of order $n$, where $\mathbb{I}:=\left(\delta_{i j}\right) \in \mathbb{C}^{n \times n}$ denotes the identity matrix. Given $\boldsymbol{\mathbb{A}}:=\left(A_{i j}\right), \boldsymbol{\mathbb{B}}:=\left(B_{i j}\right) \in \mathbb{C}^{n \times n}$, we define the following operations
$$
 \boldsymbol{\mathbb{A}}: \boldsymbol{\mathbb{B}}:=\sum_{i, j=1}^n A_{i j} \overline{B}_{i j}, \quad\tr\left(\mathbb{A}\right):=\sum_{i=1}^nA_{ii},\quad \mathbb{A}^{\dd}:=\mathbb{A}-\frac{1}{n}\left(\tr\left(\mathbb{A}\right)\right)\mathbb{I}.
$$
The entry $\overline{B}_{i j}$ represent the complex conjugate of $B_{i j}$. Similarly, given two vectors $\bs:=(s_i), \br:=(r_i) \in  \mathbb{C}^n$, we define the products
$$
\bs\cdot\br:= \sum_{i=1}^n s_i \overline{r}_i, \qquad \bs \otimes \br : = \bs \overline{\br}^\texttt{t}  = \sum_{i=1}^d\sum_{j=1}^n s_i \overline{r}_j,
$$
as the dot and dyadic product in $\mathbb{C}$, respectively, where $(\cdot)^\texttt{t}$ denotes the transpose operator. For simplicity, a generic null vector or matrix will be simply denoted by $\boldsymbol{0}$.

Let $\mathcal{O}$ be a subset of $\mathbb{R}^n$ with a Lipschitz boundary $\partial \mathcal{O}$. For $s \geq 0$ and $p \in [1, \infty]$, we denote by $\mathrm{L}^p(\mathcal{O})$ and $\mathrm{W}^{s, p}(\mathcal{O})$ the Lebesgue and Sobolev spaces of maps from $\mathcal{O}$ to $\mathbb{C}$, endowed with the norms $\|\cdot\|_{\mathrm{L}^p(\mathcal{O})}$ and $\|\cdot\|_{\mathrm{W}^{s, p}(\mathcal{O})}$, respectively, where $\mathrm{W}^{0, p}(\mathcal{O}) = \mathrm{L}^p(\mathcal{O})$.  If $p=2$, we write $\mathrm{H}^s(\mathcal{O})$ instead of $\mathrm{W}^{s, 2}(\mathcal{O})$, and denote the corresponding Lebesgue and Sobolev norms by $\|\cdot\|_{0, \mathcal{O}}$ and $\|\cdot\|_{s, \mathcal{O}}$, respectively. For $p=\infty$, we will denote $\Vert\cdot\Vert_{\infty,\mathcal{O}}$ as the induced norm over the space $\mathrm{W}^{1,\infty}(\mathcal{O})$. Bold letters will denote vector function spaces. For example, $\boldsymbol{\mathrm{L}}^p(\mathcal{O})$, $\boldsymbol{\mathrm{W}}^{s, p}(\mathcal{O})$ and $\boldsymbol{\mathrm{H}}^s(\mathcal{O})$ denote the vectorial version of $\mathrm{L}^p(\mathcal{O})$, $\mathrm{W}^{s, p}(\mathcal{O})$ and $\mathrm{H}^s(\mathcal{O})$, respectively. In the case of tensor spaces,  blackboard bold letters will be used. For example, $\mathbb{H}^s(\mathcal{O})$ is the tensorial space associated with $\mathrm{H}^s(\mathcal{O})$.
In this line, we define the Hilbert space 
$\mathbb{H}^{s}(\bdiv;\mathcal{O}):=\set{\btau\in\mathbb{H}^s(\mathcal{O}):\ \bdiv\btau\in\boldsymbol{\H}^s(\mathcal{O})}$, whose norm
is given by $\norm{\btau}^2_{\mathbb{H}^s(\bdiv;\mathcal{O})}
:=\norm{\btau}_{s,\mathcal{O}}^2+\norm{\bdiv\btau}^2_{s,\mathcal{O}}$. If $s=0$, we simply have $\mathbb{H}(\bdiv,\mathcal{O}):=\mathbb{H}^{0}(\bdiv;\mathcal{O})=\set{\btau\in\mathbb{L}^2(\mathcal{O}):\ \bdiv\btau\in\boldsymbol{\L}^2(\mathcal{O})}$, with norm $\norm{\btau}^2_{\bdiv,\mathcal{O}}
:=\norm{\btau}_{0,\mathcal{O}}^2+\norm{\bdiv\btau}^2_{0,\mathcal{O}}$.

\section{The model problem}
\label{sec:model_problem}

Let  $\O\subset\mathbb{R}^n$,  with $n\in\{2,3\}$,  be an open and bounded polygonal/polyhedral domain with Lipschitz boundary $\partial\O$. 
The problem of interest in this study is based on the Oseen equations, whose main characteristic is to model incompressible  viscous fluids at small Reynolds number. The corresponding strong form of the equations are given as:
\begin{equation}\label{def:oseen-eigenvalue}
\left\{
\begin{array}{rcll}
-\nu\Delta \bu + (\boldsymbol{\beta}\cdot\nabla)\bu + \nabla p&=&\lambda\bu,&\text{in}\,\O,\\
\div \bu&=&0,&\text{in}\,\O,\\
\displaystyle\int_{\O} p &=&0, &\text{in}\,\O,\\
\bu &=&\boldsymbol{0},&\text{in}\,\partial\O,
\end{array}
\right.
\end{equation}
where $\bu$ is the displacement, $p$ is the pressure and $\boldsymbol{\beta}$ is a given vector field, representing a \textit{steady flow velocity} such that $\boldsymbol{\beta}\in \boldsymbol{\mathrm{W}}^{1,\infty}(\O)$ is divergence free and $\nu>0$ is the kinematic viscosity. Over this last  parameter, we assume that there exists two positive  numbers $\nu^+$ and $\nu^{-}$ such that $\nu^{-}< \nu< \nu^{+}$. 

The standard assumptions on the coefficients are the following (see \cite{John2016})
\begin{itemize}
\item $\|\boldsymbol{\beta}\|_{\infty,\O}\sim 1$ if $\nu\leq \|\boldsymbol{\beta}\|_{\infty,\O}$,
\item $\nu\sim 1$ if $\|\boldsymbol{\beta}\|_{\infty,\O}<\nu$,
\end{itemize}
where the first point is the case more close to the real applications. 

Now we introduce the pseudostress tensor 
\begin{equation}\label{eq:psudostress}
\bsig=\nu\nabla\bu-( \bu\otimes\boldsymbol{\beta})-p\mathbb{I} \text{ in }\O.
\end{equation}
It is easy to check that $p=-\frac{1}{n}\left(\tr(\bsig)+\tr{(\bu\otimes\boldsymbol{\beta})}\right)$ in $\O$, which, along with \eqref{eq:psudostress}, leads us to the equation
\begin{equation}\label{eq:psudostressD}
\bsig^{\dd}=\nu\nabla\bu-(\bu\otimes\boldsymbol{\beta})^{\dd} \text{ in }\O.
\end{equation}

Note that given that $\div \boldsymbol{\beta}=0$ in $\O$, then it can be shown that $\bdiv(\bu\otimes\boldsymbol{\beta})=(\boldsymbol{\beta}\cdot\nabla)\bu.$  in $\O$

On the other hand, from the first equation of \eqref{def:oseen-eigenvalue} together with \eqref{eq:psudostress} we have $-\bdiv(\bsig)=\lambda\bu$ in $\O$. Finally, from \eqref{def:oseen-eigenvalue} we observe that the condition $(p, 1)_{0,\O} = 0$, is equivalent to $(\tr(\bsig),1)_{0,\O}=-(\tr{(\bu\otimes\boldsymbol{\beta})},1)_{0,\O}$.

Therefore, in accordance with the above, problem \eqref{def:oseen-eigenvalue} can be rewritten as follows 

\begin{equation}\label{def:oseen-eigenvalue_v-p}
\left\{
\begin{array}{rcll}
\bsig^{\dd}&=&\nu\nabla\bu-(\bu\otimes\boldsymbol{\beta})^{\dd},&\text{in}\,\O,\\
-\bdiv(\bsig)&=&\lambda\bu,&\text{in}\,\O,\\
(\tr(\bsig),1)_{0,\O}&=&-(\tr{(\bu\otimes\boldsymbol{\beta})},1)_{0,\O}, &\text{in}\,\O,\\
\bu &=&\boldsymbol{0},&\text{in}\,\partial\O.
\end{array}
\right.
\end{equation}
Let us define the spaces $\mathbb{H}:=\mathbb{H}(\bdiv,\O)$ and $\bQ= \boldsymbol{\L}^2(\O)$. 
We also define the norm $\vertiii{(\btau,\bv)}^2:=\|\btau\|_{\bdiv,\O}^2+\|\bv\|_{0,\O}^2$. A variational formulation for \eqref{def:oseen-eigenvalue_v-p} consists of: Find $\lambda\in\mathbb{C}$ and $(\boldsymbol{0},\boldsymbol{0})\neq(\bsig, \bu)\in \mathbb{H}\times\bQ$ such that  $(\tr(\bsig),1)_{0,\O}=-(\tr{(\bu\otimes\boldsymbol{\beta})},1)_{0,\O}$ and
\begin{equation}\label{def:oseen_system_weak_1}
	\left\{
	\begin{array}{rcll}
a(\bsig,\btau)+ b(\btau,\bu)+c(\bu,\btau)&=&0&\forall \btau\in \mathbb{H},\\

 b(\bsig, \bv)&=&-\lambda(\bu,\bv)_{0,\O}&\forall \bv\in \bQ,
\end{array}
	\right.
\end{equation}
where where the bilinear forms $a(\cdot,\cdot)$, $b(\cdot,\cdot)$ and $c(\cdot,\cdot)$, are defined by
\begin{align*}
a(\boldsymbol{\rho},\btau)&:=\dfrac{1}{\nu}\int_\O\boldsymbol{\rho}^{\dd}:\btau^{\dd},\qquad &\boldsymbol{\rho},\btau\in\mathbb{H},\\
b(\btau,\bv)&:=\int_\O\bdiv(\btau)\cdot\bv,\qquad &(\btau,\bv)\in \mathbb{H}\times\bQ,\\
c(\bv,\btau)&:=\dfrac{1}{\nu}\int_\O(\bv\otimes\boldsymbol{\beta})^{\dd}:\btau,\qquad &(\bv,\btau)\in \bQ\times\mathbb{H}.
\end{align*}

Now for our analysis it is necessary the following  decomposition $\mathbb{H}=\mathbb{H}_0\oplus \R\mathbb{I}$, where 
$$\mathbb{H}_0:=\left\{\btau\in \mathbb{H} \;\:\;\ \int_{\O}\tr(\btau)=0 \right\}.$$
More precisely, each $\btau\in\mathbb{H}$ can be decomposed uniquely as 
$$\btau=\btau_0+c\mathbb{I},\quad\text{with}\quad \btau_0\in \mathbb{H}_0\quad \text{and}\quad c:=\dfrac{1}{n|\O|}\int_{\O}\tr(\btau)\in\mathbb{R}.$$

Thus, if we define the tensor $\bsig_0:=\bsig+\displaystyle\dfrac{1}{n|\O|}\int_\O\tr(\bu\otimes\boldsymbol{\beta})\mathbb{I}$, it is clear that $\bsig_0\in\mathbb{H}_0$ and  it is not difficult to prove that Problem \eqref{def:oseen_system_weak_1}  can be equivalently rewritten in terms of $\bsig_0$ as: Find $\lambda\in\mathbb{C}$ and $(\boldsymbol{0},\boldsymbol{0})\neq(\bsig_0, \bu)\in \mathbb{H}_0\times\bQ$ such that 
$$
	\left\{
	\begin{array}{rcll}
a(\bsig_0,\btau)+ b(\btau,\bu)+c(\bu,\btau)&=&0&\forall \btau\in \mathbb{H}_0,\\

 b(\bsig_0, \bv)&=&-\lambda(\bu,\bv)_{0,\O}&\forall \bv\in \bQ.
\end{array}
	\right.
$$
Consequently, to simplify the analysis, instead of addressing Problem \eqref{def:oseen_system_weak_1} , we propose to analyze and discretize the following problem: Find $\lambda\in\mathbb{C}$ and $(\boldsymbol{0},\boldsymbol{0})\neq(\bsig, \bu)\in \mathbb{H}_0\times\bQ$ such that 
\begin{equation}\label{def:oseen_system_weak_1_H0}
	\left\{
	\begin{array}{rcll}
a(\bsig,\btau)+ b(\btau,\bu)+c(\bu,\btau)&=&0&\forall \btau\in \mathbb{H}_0,\\

 b(\bsig, \bv)&=&-\lambda(\bu,\bv)_{0,\O}&\forall \bv\in \bQ.
\end{array}
	\right.
\end{equation}
\begin{remark}
It is easy to check that $(\lambda,\bsig, \bu)\in\mathbb{C}\times\mathbb{H}_0\times\bQ$ is a solution of Problem \eqref{def:oseen_system_weak_1}  if and only if $(\lambda,\bsig, \bu)\in\mathbb{C}\times\mathbb{H}_0\times\bQ$ is a solution of Problem \eqref{def:oseen_system_weak_1_H0}. Also, note that the post-processing formula for the pressure $p$ now reduces  
$$p=-\dfrac{1}{n}\left(\tr(\bsig)+\tr{(\bu\otimes\boldsymbol{\beta})}-\dfrac{1}{|\O|}\int_\O\tr(\bu\otimes\boldsymbol{\beta})\right).$$
\end{remark}

To continue with our analysis, we invoke the following result (see \cite[Chapter 9, Proposition 9.1.1]{MR3097958}) 
\begin{lemma}\label{lmm:cota}
There exists $c_1>0$, such that
\begin{equation}
c_1\|\btau\|_{\bdiv,\O}^2\leq \|\btau^{\dd}\|_{0,\O}^2+\|\bdiv\btau\|_{0,\O}^2,\qquad\forall \btau\in \mathbb{H}_0.
\end{equation}
\end{lemma}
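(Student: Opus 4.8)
The plan is to reduce the stated coercivity of the deviatoric part to the classical surjectivity of the divergence operator. I would start from the pointwise orthogonal splitting $\btau=\btau^{\dd}+\frac1n\tr(\btau)\mathbb{I}$; since $\mathbb{I}:\btau^{\dd}=\tr(\btau^{\dd})=0$ and $\mathbb{I}:\mathbb{I}=n$, integrating over $\O$ gives the exact identity
\begin{equation*}
\|\btau\|_{0,\O}^2=\|\btau^{\dd}\|_{0,\O}^2+\tfrac1n\|\tr(\btau)\|_{0,\O}^2 .
\end{equation*}
Because $\|\btau\|_{\bdiv,\O}^2=\|\btau\|_{0,\O}^2+\|\bdiv\btau\|_{0,\O}^2$, the whole inequality follows once $\|\tr(\btau)\|_{0,\O}$ is controlled by $\|\btau^{\dd}\|_{0,\O}+\|\bdiv\btau\|_{0,\O}$; the remaining two terms on the right-hand side of the claim are then absorbed for free.

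The key analytical ingredient enters here. For $\btau\in\mathbb{H}_0$ the scalar field $\tr(\btau)$ has zero integral over $\O$, hence belongs to $\L^2_0(\O)$. I would invoke the existence of a bounded right inverse of the divergence --- equivalently, the Stokes inf-sup condition on $\O$ --- to produce $\bv\in\boldsymbol{\H}^1_0(\O)$ with $\div\bv=\tr(\btau)$ and $\|\bv\|_{1,\O}\le C\,\|\tr(\btau)\|_{0,\O}$, the constant $C$ depending only on $\O$. (For complex-valued $\btau$ one applies this real linear construction to the real and imaginary parts of $\tr(\btau)$.)

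With such a $\bv$ at hand the estimate is pure bookkeeping. Taking the pointwise identity $\frac1n\tr(\btau)\,\div\bv=\btau:\nabla\bv-\btau^{\dd}:\nabla\bv$, integrating over $\O$ and using $\div\bv=\tr(\btau)$ on the left yields $\frac1n\|\tr(\btau)\|_{0,\O}^2=\int_\O\btau:\nabla\bv-\int_\O\btau^{\dd}:\nabla\bv$. An integration by parts in the first term, whose boundary contribution vanishes because $\bv\in\boldsymbol{\H}^1_0(\O)$, turns it into $-\int_\O\bdiv\btau\cdot\bv$. Cauchy--Schwarz together with $\|\bv\|_{0,\O}\le\|\bv\|_{1,\O}$ and $\|\nabla\bv\|_{0,\O}\le\|\bv\|_{1,\O}\le C\|\tr(\btau)\|_{0,\O}$ then gives $\frac1n\|\tr(\btau)\|_{0,\O}^2\le C\big(\|\bdiv\btau\|_{0,\O}+\|\btau^{\dd}\|_{0,\O}\big)\|\tr(\btau)\|_{0,\O}$, so dividing by $\|\tr(\btau)\|_{0,\O}$ (the case $\tr(\btau)=0$ being trivial) bounds $\|\tr(\btau)\|_{0,\O}$ as required. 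Combining this with the splitting identity and the definition of $\|\cdot\|_{\bdiv,\O}$ produces a constant $c_1>0$ with the claimed estimate.

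The only genuinely nontrivial step is the surjectivity of $\div$ onto $\L^2_0(\O)$ with a continuous right inverse; the rest is algebra and one integration by parts. This is exactly where the defining constraint $\int_\O\tr(\btau)=0$ of $\mathbb{H}_0$ is indispensable: without the zero-mean condition $\tr(\btau)$ would not lie in $\L^2_0(\O)$, no admissible $\bv$ would exist, and the inequality would indeed fail on the larger space $\mathbb{H}$ (take $\btau=c\mathbb{I}$, for which $\btau^{\dd}=\boldsymbol{0}$ and $\bdiv\btau=\boldsymbol{0}$ but $\btau\neq\boldsymbol{0}$).
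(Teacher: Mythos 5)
Your proof is correct. Note that the paper does not actually prove this lemma: it simply cites \cite[Chapter 9, Proposition 9.1.1]{MR3097958}, and your argument is precisely the standard proof behind that citation --- the orthogonal splitting $\btau=\btau^{\dd}+\tfrac1n\tr(\btau)\mathbb{I}$, control of the zero-mean trace via a bounded right inverse of $\div$ on $\L^2_0(\O)$, and one integration by parts using $\bv\in\boldsymbol{\H}^1_0(\O)$. All steps check out, including the careful handling of the complex-valued case and the observation that the zero-mean constraint defining $\mathbb{H}_0$ is exactly what excludes the counterexample $\btau=c\mathbb{I}$.
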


Let us define the kernel $\mathcal{K}$ of $b(\cdot,\cdot)$ as follows $
\mathcal{K}:=\{\bv\in\mathbb{H}_0\,:\,  b(\bv, \boldsymbol{q})=0\,\,\,\,\forall \boldsymbol{q}\in\bQ\}$.
Thanks to the above lemma  we have the coercivity of  $a(\cdot,\cdot)$ in $\mathcal{K}$. i.e
\begin{equation}\label{eq:K-coercive}
\dfrac{c_1}{\nu}\|\btau\|_{\bdiv,\O}^2\leq a(\btau,\btau),\qquad \forall\btau\in\mathcal{K}.
\end{equation}

It is easy to check that  $b(\cdot,\cdot)$ satisfies the following inf-sup condition
\begin{equation}
\label{eq:inf-sup_cont}
\displaystyle\sup_{\btau\in\mathbb{H}_0}\frac{b(\btau,\boldsymbol{q})}{\|\btau\|_{\bdiv,\O}}\geq\gamma\|\boldsymbol{q}\|_{0,\O}\quad\forall \boldsymbol{q}\in\bQ.
\end{equation}
Hence, we introduce the so-called solution operator, which we denote by $\bT$ and is defined as follows
\begin{equation}\label{eq:operador_solucion_u}
	\bT:\bQ\rightarrow \bQ,\qquad \boldsymbol{f}\mapsto \bT\boldsymbol{f}:=\widehat{\bu}, 
\end{equation}
where the pair  $(\widehat{\bsig}, \widehat{\bu})\in\mathbb{H}_0\times\bQ$ is the solution of the following  source problem
\begin{equation}\label{def:oseen_system_source}
	\left\{
	\begin{array}{rcll}
a(\widehat{\bsig},\btau)+ b(\btau,\widehat{\bu})+c(\widehat{\bu},\btau)&=&0&\forall \btau\in \mathbb{H}_0,\\

 b(\widehat{\bsig}, \bv)&=&-(\boldsymbol{f},\bv)_{0,\O}&\forall \bv\in \bQ.
\end{array}
	\right.
\end{equation}
Now our  aim is to determine that $\bT$ is well defined. To do this task, we employ a fixed point strategy.
\subsection{The fixed point argument}\label{sec:fixed-point-continuous} In this section we will define the solution operator associated to our spectral problem and analyze, through the fixed point theory, the good approach of the solution operator. Let us consider the following source  problem
\begin{equation}\label{def:oseen_system_source_00}
	\left\{
	\begin{array}{rcll}
a(\widehat{\bsig},\btau)+ b(\btau,\widehat{\bu})&=&-\displaystyle\frac{1}{\nu}\int_{\O}(\boldsymbol{w}\otimes\boldsymbol{\beta})^{\texttt{d}}:\btau&\forall \btau\in \mathbb{H}_0,\\
 b(\widehat{\bsig}, \bv)&=&-(\boldsymbol{f},\bv)_{0,\O}&\forall \bv\in \bQ.
\end{array}
	\right.
\end{equation}
Let $\mathcal{G}_{\boldsymbol{w},\boldsymbol{\beta}}$ and $\mathcal{F}$ be two functionals defined by
$$
\mathcal{G}_{\boldsymbol{w},\boldsymbol{\beta}}(\btau):=-\displaystyle\frac{1}{\nu}\int_{\O}(\boldsymbol{w}\otimes\boldsymbol{\beta})^{\texttt{d}}:\btau^{\texttt{d}}\,\,\,\forall\btau\in\mathbb{H}_0,\,\,\,\,\,\,\,\,\mathcal{F}(\bv):=-\int_{\O}\boldsymbol{f}\cdot\bv\,\,\,\forall\bv\in\bQ.
$$ and are such that
$$
\frac{|\mathcal{G}_{\boldsymbol{w},\boldsymbol{\beta}}(\btau)|}{\|\btau\|_{\bdiv,\O}}\leq\frac{1}{\nu}\|\boldsymbol{w}\|_{0,\O}\|\boldsymbol{\beta}\|_{\infty,\O}\,\,\,\,\text{and}\,\,\,\,\,\frac{|\mathcal{F}(\bv)|}{\|\bv\|_{0,\O}}\leq \|\boldsymbol{f}\|_{0,\O}.
$$
$$
|a(\bsig,\btau)|\leq\dfrac{1}{\nu}\|\bsig\|_{\bdiv,\O}\|\btau\|_{\bdiv,\O}\,\,\,\,\text{and}\,\,\,\,\,|b(\bsig,\bv)|\leq\|\bsig\|_{\bdiv,\O}\|\bv\|_{0,\O}.
$$
Then, thanks to the above estimates, to the coercivity of $a(\cdot,\cdot)$ in $\mathcal{K}$, to the inf-sup property, we have from the  Bab\v{u}ska-Brezzi theory (see, for example, \cite[Theorem 2.34]{MR2050138}) that there is guaranteed to exist a unique pair $(\widehat{\bsig} ,\widehat{\bu})$ solution of \eqref{def:oseen_system_source_00}, which also satisfies the continuous dependence on the data. i.e
\begin{align}\label{eq:dependece_date_1}
\|\widehat{\bsig}\|_{\bdiv,\O}\leq \dfrac{\nu}{c_1}\|\mathcal{G}_{\boldsymbol{w},\boldsymbol{\beta}}\|_{\mathbb{H}(\bdiv,\O)'}+\dfrac{1}{\gamma}\left(1+\dfrac{\nu}{c_1}\|a\|\right)\|\mathcal{F}\|_{0,\O},\\\label{eq:dependece_date_2}
\|\widehat{\bu}\|_{0,\O}\leq\dfrac{1}{\gamma}\left(1+\dfrac{\nu}{c_1}\|a\|\right)\|\mathcal{G}_{\boldsymbol{w},\boldsymbol{\beta}}\|_{\mathbb{H}(\bdiv,\O)'}+\dfrac{\|a\|}{\gamma^2}\left(1+\dfrac{\nu}{c_1}\|a\|\right)\|\mathcal{F}\|_{0,\O},
\end{align}
where $\|a\|:=\min\left\{1,\dfrac{1}{\nu}\right\}$.

 To continue with the analysis, let us introduce the following space $M_{R_0}:=\{\bv\in\bQ\,:\,\|\bv\|_{0,\O}\leq R_0\}$. Also, let us define the following operator
 \begin{align*}
 \mathcal{J}:&\bQ\rightarrow\bQ\times \mathbb{H}_0,\\
 &\bw\mapsto\mathcal{J}(\bw):=(\mathcal{J}_1(\bw),\mathcal{J}_2(\bw))=(\bu,\boldsymbol{\sigma}).
 \end{align*}
 Observe that fixing $R_0> 0$ and let us consider $\bw\in M_{R_0}$. We have, thanks to the definition
of $J_1$, the triangular inequality, and \eqref{eq:dependece_date_2}, that
 \begin{multline*}
 \|\mathcal{J}_1(\bw)\|_{0,\O}=\|\bu\|_{0,\O} \leq \dfrac{1}{\gamma}\left(1+\dfrac{\nu}{c_1}\|a\|\right)\|\mathcal{G}_{\bw,\boldsymbol{\beta}}\|_{\mathbb{H}(\bdiv,\O)'}+ \dfrac{\|a\|}{\gamma^2}(1+\dfrac{\nu}{c_1}\|a\|)\|\mathcal{F}\|_{0,\O}\\
 \leq\dfrac{1}{\nu\gamma}\left(1+\dfrac{\nu}{c_1}\|a\|\right)(\|\bw\|_{0,\O}\|\boldsymbol{\beta}\|_{\infty,\O})+\dfrac{\|a\|}{\gamma^2}(1+\dfrac{\nu}{c_1}\|a\|)\|\boldsymbol{f}\|_{0,\O}.
 \end{multline*}
Now, we define $H:=\left(1+\dfrac{\nu}{c_1}\|a\|\right)$, $C_1:=\dfrac{1}{2\nu\gamma}$ and $C_2:=\dfrac{C_1}{2\nu\gamma}\|\boldsymbol{\beta}\|_{\infty,\O}^2+\dfrac{\|a\|}{\gamma^2}\|\boldsymbol{f}\|_{0,\O}$.
 After elementary algebraic manipulations, we can assert that the right-hand side above is smaller or
equal than $R_0$ if
 \begin{equation}\label{eq:R_0}
 \displaystyle \dfrac{HC_1}{2}R_0^2-R_0+HC_2\leq 0,
 \end{equation}
 where $R_0$ and $C_2$ are such that $$0<R_0<\dfrac{1-\sqrt{1-2H^2C_1C_2}}{HC_1},\qquad C_2< \dfrac{1}{2H^2C_1}.$$
Then we have that $\mathcal{J}_1(M_{R_0})\subset M_{R_0}$.

The following results proves that $\mathcal{J}_1$ is a contraction.
\begin{lemma}\label{lmm:cotaj}
There exists a positive constant $L$ , depending only on data, such that 
$$\|\mathcal{J}_1(\bw_1)-\mathcal{J}_1(\bw_2)\|_{0,\O}\leq L\|\bw_1-\bw_2\|_{0,\O},\qquad \forall\bw_1,\bw_2\in M_{R_0}.$$ 
\end{lemma}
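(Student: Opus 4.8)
The plan is to exploit the fact that the source problem \eqref{def:oseen_system_source_00} is linear with respect to the data entering its right-hand side, so that the difference $\mathcal{J}_1(\bw_1)-\mathcal{J}_1(\bw_2)$ is itself governed by a source problem of the same type whose data depend only on $\bw_1-\bw_2$. Concretely, for $i=1,2$ let $(\widehat{\bsig}_i,\widehat{\bu}_i)\in\mathbb{H}_0\times\bQ$ denote the unique solution of \eqref{def:oseen_system_source_00} with $\bw=\bw_i$, so that $\mathcal{J}_1(\bw_i)=\widehat{\bu}_i$. First I would subtract the two systems and invoke the bilinearity of $a(\cdot,\cdot)$ and $b(\cdot,\cdot)$.

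Subtracting, the pair $(\widehat{\bsig}_1-\widehat{\bsig}_2,\widehat{\bu}_1-\widehat{\bu}_2)$ satisfies
\begin{equation*}
\left\{
\begin{array}{rcll}
a(\widehat{\bsig}_1-\widehat{\bsig}_2,\btau)+b(\btau,\widehat{\bu}_1-\widehat{\bu}_2)&=&\mathcal{G}_{\bw_1-\bw_2,\boldsymbol{\beta}}(\btau)&\forall\btau\in\mathbb{H}_0,\\
b(\widehat{\bsig}_1-\widehat{\bsig}_2,\bv)&=&0&\forall\bv\in\bQ,
\end{array}
\right.
\end{equation*}
where I used that $\mathcal{G}_{\bw_1,\boldsymbol{\beta}}-\mathcal{G}_{\bw_2,\boldsymbol{\beta}}=\mathcal{G}_{\bw_1-\bw_2,\boldsymbol{\beta}}$ by linearity of the map $\bw\mapsto\mathcal{G}_{\bw,\boldsymbol{\beta}}$, and crucially that the load $\mathcal{F}(\bv)=-(\boldsymbol{f},\bv)_{0,\O}$ is identical in both problems and hence cancels, leaving a homogeneous second equation. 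This is precisely a problem of the form \eqref{def:oseen_system_source_00} with data $\mathcal{G}=\mathcal{G}_{\bw_1-\bw_2,\boldsymbol{\beta}}$ and $\mathcal{F}=0$, so its well-posedness and continuous dependence are already guaranteed.

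Applying the continuous dependence estimate \eqref{eq:dependece_date_2} to this difference problem with $\mathcal{F}=0$ gives
\begin{equation*}
\|\widehat{\bu}_1-\widehat{\bu}_2\|_{0,\O}\leq\frac{1}{\gamma}\left(1+\frac{\nu}{c_1}\|a\|\right)\|\mathcal{G}_{\bw_1-\bw_2,\boldsymbol{\beta}}\|_{\mathbb{H}(\bdiv,\O)'}.
\end{equation*}
Then I would bound the functional norm exactly as already recorded in the excerpt, namely $\|\mathcal{G}_{\bw_1-\bw_2,\boldsymbol{\beta}}\|_{\mathbb{H}(\bdiv,\O)'}\leq\frac{1}{\nu}\|\bw_1-\bw_2\|_{0,\O}\|\boldsymbol{\beta}\|_{\infty,\O}$, which follows from Cauchy--Schwarz together with $\|(\bw\otimes\boldsymbol{\beta})^{\dd}\|_{0,\O}\leq\|\bw\|_{0,\O}\|\boldsymbol{\beta}\|_{\infty,\O}$. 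Collecting constants yields the claim with
\begin{equation*}
L:=\frac{1}{\nu\gamma}\left(1+\frac{\nu}{c_1}\|a\|\right)\|\boldsymbol{\beta}\|_{\infty,\O}=\frac{H}{\nu\gamma}\|\boldsymbol{\beta}\|_{\infty,\O},
\end{equation*}
which depends only on $\nu$, $\gamma$, $c_1$ and $\|\boldsymbol{\beta}\|_{\infty,\O}$, as required.

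The computation is essentially routine; the only genuinely essential point is the linearity-and-cancellation argument that reduces the Lipschitz estimate to the already-proven a priori bound \eqref{eq:dependece_date_2}. The subtlety worth double-checking is that $L$ is independent of $R_0$ and of the particular iterates $\bw_1,\bw_2$, which is automatic precisely because $\mathcal{F}$ (and hence the source $\boldsymbol{f}$) drops out of the difference problem. To upgrade this Lipschitz bound to a genuine contraction, as the preamble of the lemma anticipates, one must additionally impose the data-smallness condition $L<1$, i.e. $\|\boldsymbol{\beta}\|_{\infty,\O}$ small relative to $\nu\gamma/H$, which is consistent with the standing assumptions on $\nu$ and $\boldsymbol{\beta}$ and allows Banach's fixed point theorem to be applied to $\mathcal{J}_1$ on $M_{R_0}$.
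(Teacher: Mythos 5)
Your proof is correct, and its first half coincides exactly with the paper's: both arguments subtract the two saddle-point problems so that the load $\mathcal{F}$ cancels, leaving the difference pair $(\widehat{\bsig}_1-\widehat{\bsig}_2,\widehat{\bu}_1-\widehat{\bu}_2)$ governed by a problem of type \eqref{def:oseen_system_source_00} with right-hand side $\mathcal{G}_{\bw_1-\bw_2,\boldsymbol{\beta}}$ and homogeneous second equation. Where you diverge is in how the stability of that difference problem is extracted: you invoke the abstract Bab\v{u}ska--Brezzi a priori bound \eqref{eq:dependece_date_2} with $\mathcal{F}=0$ in a single stroke, whereas the paper re-derives the estimate by hand, first observing that $\widehat{\bsig}_1-\widehat{\bsig}_2$ lies in the kernel $\mathcal{K}$ so that the coercivity \eqref{eq:K-coercive} gives $\|\widehat{\bsig}_1-\widehat{\bsig}_2\|_{\bdiv,\O}\leq c_1^{-1}\|\boldsymbol{\beta}\|_{\infty,\O}\|\bw_1-\bw_2\|_{0,\O}$, and then feeding this into the inf-sup condition \eqref{eq:inf-sup_cont} to control $\|\widehat{\bu}_1-\widehat{\bu}_2\|_{0,\O}$. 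The two routes are logically equivalent (the explicit route is essentially the proof of \eqref{eq:dependece_date_2} specialized to this case); yours is shorter and reuses an estimate already on record, while the paper's hands-on version makes the role of the kernel coercivity visible and also yields, as a by-product, the explicit bound on the pseudostress difference. The resulting Lipschitz constants differ slightly --- your $L=H\|\boldsymbol{\beta}\|_{\infty,\O}/(\nu\gamma)$ versus the paper's $(\|\boldsymbol{\beta}\|_{\infty,\O}/\gamma)\max\{1/\nu,1/c_1\}$ --- but both depend only on the data, which is all the lemma asserts, and your closing remark that the contraction property requires the additional smallness condition $L<1$ matches the hypothesis \eqref{eq_contraction} imposed in the theorem that follows.
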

\begin{proof}
Let $\bw_1,\bw_2\in M_{R_0}$, such that $\mathcal{J}(\bw_1):=(\bu_1,\bsig_1)$ , $\mathcal{J}(\bw_2)=(\bu_2,\bsig_2)$ solutions to the following problems:
$$
	\left\{
	\begin{array}{rcll}
a(\bsig_1,\btau)+ b(\btau,\bu_1)&=&-\displaystyle\frac{1}{\nu}\int_{\O}(\boldsymbol{w}_1\otimes\boldsymbol{\beta})^{\texttt{d}}:\btau&\forall \btau\in \mathbb{H}_0,\\
 b(\bsig_1, \bv)&=&-(\boldsymbol{f},\bv)_{0,\O}&\forall \bv\in \bQ.
\end{array}
	\right.
$$
$$
	\left\{
	\begin{array}{rcll}
a(\bsig_2,\btau)+ b(\btau,\bu_2)&=&-\displaystyle\frac{1}{\nu}\int_{\O}(\boldsymbol{w}_2\otimes\boldsymbol{\beta})^{\texttt{d}}:\btau&\forall \btau\in \mathbb{H}_0,\\
 b(\bsig_2, \bv)&=&-(\boldsymbol{f},\bv)_{0,\O}&\forall \bv\in \bQ.
\end{array}
	\right.
$$
Therefore, subtracting the second problem from the first, it follows that 
\begin{equation}\label{eq:intermedia}
	\left\{
	\begin{array}{rcll}
a(\bsig_1-\bsig_2,\btau)+ b(\btau,\bu_1-\bu_2)&=&-\displaystyle\frac{1}{\nu}\int_{\O}(\bw_1\otimes\boldsymbol{\beta})^{\dd}:\btau+\displaystyle\frac{1}{\nu}\int_{\O}(\bw_2\otimes\boldsymbol{\beta})^{\dd}:\btau,\\
 b(\bsig_1-\bsig_2, \bv)&=&0.
\end{array}
	\right.
\end{equation}
We note that from the above equation, if we take $\btau=\bsig_1-\bsig_2$ and $\bv=\bu_1-\bu_2$ we have that 
$$a(\bsig_1-\bsig_2,\bsig_1-\bsig_2)=\displaystyle\frac{1}{\nu}\int_{\O}(\bw_1\otimes\boldsymbol{\beta})^{\dd}:(\bsig_1-\bsig_2)-\displaystyle\frac{1}{\nu}\int_{\O}(\bw_2\otimes\boldsymbol{\beta})^{\dd}:(\bsig_1-\bsig_2).$$
Now using \eqref{eq:K-coercive}, we obtain 
$$
\dfrac{c_1}{\nu}\|\bsig_1-\bsig_2\|_{\bdiv,\O}^2\leq a(\bsig_1-\bsig_2,\bsig_1-\bsig_2)\leq\dfrac{1}{\nu}||\bw_1-\bw_2\|_{0,\O}\|\bsig_1-\bsig_2\|_{\bdiv,\O}\|\boldsymbol{\beta}\|_{\infty,\O}. 
$$
Then
\begin{equation}
\|\bsig_1-\bsig_2\|_{\bdiv,\O}\leq \dfrac{1}{c_1}||\bw_1-\bw_2\|_{0,\O}\|\boldsymbol{\beta}\|_{\infty,\O}. 
\end{equation}
On the other hand, using the inf-sup condition \eqref{eq:inf-sup_cont} we have
\begin{multline*}
\gamma\|\mathcal{J}_1(\bw_1)-\mathcal{J}_1(\bw_2)\|_{0,\O}=\gamma\|\bu_1-\bu_2\|_{0,\O}\leq \sup_{\btau\in\mathbb{H}_0}\frac{b(\btau,\bu_1-\bu_2)}{\|\btau_h\|_{\bdiv,\O}}\\
\leq \sup_{\btau\in\mathbb{H}_0}\frac{\left|\mathcal{G}_{\boldsymbol{\bw_1},\boldsymbol{\beta}}(\btau)-\mathcal{G}_{\boldsymbol{\bw_2},\boldsymbol{\beta}}(\btau)-a(\bsig_1-\bsig_2,\btau)\right|}{\|\btau_h\|_{\bdiv,\O}}\\
	\leq \sup_{\btau\in\mathbb{H}_0}\frac{\left|\mathcal{G}_{\boldsymbol{\bw_1},\boldsymbol{\beta}}(\btau)-\mathcal{G}_{\boldsymbol{\bw_2},\boldsymbol{\beta}}(\btau)\right|}{\|\btau_h\|_{\bdiv,\O}}+\sup_{\btau\in\mathbb{H}_0}\frac{\left|a(\bsig_1-\bsig_2,\btau)\right|}{\|\btau_h\|_{\bdiv,\O}}\\ \leq \dfrac{1}{\nu}\|\bw_1-\bw_2\|_{0,\O}\|\boldsymbol{\beta}\|_{\infty,\O}+\|\bsig_1-\bsig_2\|_{\bdiv,\O}\\
\leq  \dfrac{1}{\nu}\|\bw_1-\bw_2\|_{0,\O}\|\boldsymbol{\beta}\|_{\infty,\O}+ \dfrac{1}{c_1}\|\bw_1-\bw_2\|_{0,\O}\|\boldsymbol{\beta}\|_{\infty,\O} \\
\leq \|\boldsymbol{\beta}\|_{\infty,\O}\max\left\{\dfrac{1}{\nu},\dfrac{1}{c_1}\right\}||\bw_1-\bw_2\|_{0,\O}.
\end{multline*}
Therefore
$$\|\mathcal{J}_1(\bw_1)-\mathcal{J}_1(\bw_2)\|_{0,\O}\leq \dfrac{\|\boldsymbol{\beta}\|_{\infty,\O}}{\gamma}\max\left\{\dfrac{ 1}{\nu},\dfrac{1}{c_1}\right\}\|\bw_1-\bw_2\|_{0,\O}.$$
Then the desired result follows  choosing
$L:=\dfrac{\|\boldsymbol{\beta}\|_{\infty,\O}}{\gamma}\max\left\{\dfrac{ 1}{\nu},\dfrac{1}{c_1}\right\}.$
\end{proof}
\begin{theorem}
Given $\boldsymbol{f}\in\bQ$, assume  that the data is sufficiently small as in \eqref{eq:R_0}, and , further assume that 
\begin{equation}\label{eq_contraction}\|\boldsymbol{\beta}\|_{\infty,\O}\max\left\{\dfrac{ 1}{\nu},\dfrac{1}{c_1}\right\}<\gamma.
\end{equation}
Then there exists a unique solution of problem \eqref{def:oseen_system_source}.
\end{theorem}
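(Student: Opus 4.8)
The plan is to recognize that problem \eqref{def:oseen_system_source} is precisely the fixed-point equation for the map $\mathcal{J}_1$ already introduced, and then to invoke the Banach fixed-point theorem on the closed ball $M_{R_0}$. The first step I would carry out is to establish the equivalence: a pair $(\widehat{\bsig},\widehat{\bu})\in\mathbb{H}_0\times\bQ$ solves \eqref{def:oseen_system_source} if and only if $\widehat{\bu}$ is a fixed point of $\mathcal{J}_1$, with $\widehat{\bsig}=\mathcal{J}_2(\widehat{\bu})$. Indeed, taking $\bw=\widehat{\bu}$ in the auxiliary problem \eqref{def:oseen_system_source_00}, its right-hand side $-\frac{1}{\nu}\int_{\O}(\widehat{\bu}\otimes\boldsymbol{\beta})^{\dd}:\btau$ coincides with $-c(\widehat{\bu},\btau)$, because $(\widehat{\bu}\otimes\boldsymbol{\beta})^{\dd}$ is trace-free so that $(\widehat{\bu}\otimes\boldsymbol{\beta})^{\dd}:\btau=(\widehat{\bu}\otimes\boldsymbol{\beta})^{\dd}:\btau^{\dd}$; moving this term to the left recovers exactly the first equation of \eqref{def:oseen_system_source}, while the second equations agree verbatim. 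Thus solving \eqref{def:oseen_system_source} reduces to finding a fixed point of $\mathcal{J}_1$.

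Next I would assemble the hypotheses into the three ingredients required by the contraction principle. The Bab\v{u}ska--Brezzi theory applied to \eqref{def:oseen_system_source_00}, using the kernel coercivity \eqref{eq:K-coercive} and the inf-sup condition \eqref{eq:inf-sup_cont}, guarantees that $\mathcal{J}$, hence $\mathcal{J}_1$, is well defined and satisfies the continuous-dependence bounds \eqref{eq:dependece_date_1}--\eqref{eq:dependece_date_2}. The smallness condition \eqref{eq:R_0} on the data, together with those bounds, yields the self-mapping property $\mathcal{J}_1(M_{R_0})\subset M_{R_0}$ established just before the statement. Finally, Lemma \ref{lmm:cotaj} provides the Lipschitz estimate $\|\mathcal{J}_1(\bw_1)-\mathcal{J}_1(\bw_2)\|_{0,\O}\le L\|\bw_1-\bw_2\|_{0,\O}$ with $L=\frac{\|\boldsymbol{\beta}\|_{\infty,\O}}{\gamma}\max\{1/\nu,1/c_1\}$, and the contraction hypothesis \eqref{eq_contraction} is exactly the requirement $L<1$.

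Since $M_{R_0}$ is a closed ball of the Hilbert space $\bQ$, it is a complete metric space, and $\mathcal{J}_1:M_{R_0}\to M_{R_0}$ is a contraction. The Banach fixed-point theorem then furnishes a unique $\widehat{\bu}\in M_{R_0}$ with $\mathcal{J}_1(\widehat{\bu})=\widehat{\bu}$; setting $\widehat{\bsig}:=\mathcal{J}_2(\widehat{\bu})$ and invoking the equivalence of the first step gives a solution of \eqref{def:oseen_system_source}. For uniqueness I would observe that $\bw\mapsto\mathcal{J}_1(\bw)$ is affine, since $\bw$ enters only linearly through $\mathcal{G}_{\bw,\boldsymbol{\beta}}$ while the $\boldsymbol{f}$-contribution is fixed; consequently the Lipschitz bound of Lemma \ref{lmm:cotaj} holds on all of $\bQ$ (its proof never uses the restriction to $M_{R_0}$), and $L<1$ makes $\mathcal{J}_1$ a global contraction, so its fixed point is unique in $\bQ$. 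This transfers to uniqueness of the solution of \eqref{def:oseen_system_source}.

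The only genuinely delicate point is the equivalence in the first step, namely verifying that the $\dd$-projection makes the explicit source term in \eqref{def:oseen_system_source_00} agree with the implicit convective form $c(\cdot,\cdot)$ when $\bw$ is replaced by the sought velocity; once this algebraic identity is in place, the remainder is a direct appeal to the self-mapping property, Lemma \ref{lmm:cotaj}, and the Banach fixed-point theorem, all of which rest on results already proved.
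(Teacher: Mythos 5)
Your proposal is correct and follows essentially the same route as the paper: the paper's proof is a one-line appeal to the well-definedness of $\mathcal{J}$, the self-mapping property $\mathcal{J}_1(M_{R_0})\subset M_{R_0}$, Lemma \ref{lmm:cotaj}, and the fact that \eqref{eq_contraction} makes $\mathcal{J}_1$ a contraction, which is exactly your Banach fixed-point argument. Your additional care in spelling out the equivalence between \eqref{def:oseen_system_source} and the fixed-point equation, and in noting that the Lipschitz bound holds globally so uniqueness is not confined to $M_{R_0}$, only makes the argument more complete than the paper's.
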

\begin{proof}
The result is a direct consequence of the well definition of $\mathcal{J}$  together with  $\mathcal{J}_1(M_{R_0})\subset M_{R_0}$, Lemma \ref{lmm:cotaj}, and the fact that \eqref{eq_contraction} gives that $\mathcal{J}$ is a contraction mapping.
\end{proof}

We observe that $(\lambda , (\bsig, \bu )) \in  \mathbb{C}  \times \mathcal{X}$  is solution of \eqref{def:oseen_system_weak_1} if and only if $(\kappa , \bu )$ is an eigenpair of $\bT , i.e., \bT \bu  = \kappa \bu$  with $\kappa  := 1/\lambda$.

\begin{remark}\label{remark-del-uhat-sigmahat}
It is known that the additional regularity associated with the solutions of problem \ref{def:oseen-eigenvalue} is $\bu\in \boldsymbol{\H}^{1+s}(\Omega)$ and $p\in \H^s(\Omega)$ with $s>0$ (see for instance  \cite{LEPE2024116959}). therefore, observe this regularity, together with the first and second equations of \eqref{def:oseen-eigenvalue_v-p} allow us to conclude that  $\bdiv\bsig\in \boldsymbol{\H}^{1+s}(\O)$ and $\bsig\in \mathbb{H}^{s}(\O)$, respectively. This additional regularity for the pseudotress tensor is a key ingredient for the numerical approximation, letting us to conclude that operator $\bT$ is compact. Moreover, the spectrum of $\bT$ satisfies $\sp(\bT)=\{0\}\cup\{\mu_k\}_{k\in\mathbb{N}}$, where $\{\mu_k\}_{k\in\mathbb{N}}\in (0,1)$ is a sequence of complex  eigenvalues which converges to zero, repeated according their respective multiplicities.
\end{remark}
From now and on, the following assumption is made:
 Given $\boldsymbol{f}\in\bQ$, if the data are sufficiently small, as specified in  \eqref{eq:R_0}, and condition \eqref{eq_contraction} is satisfied, then it follows that 
\begin{equation}
	\label{eq:reg_u_if}
	\|\widehat{\bsig}\|_{s,\O}+\|\widehat{\bu}\|_{1+s,\Omega}\leq C\|\boldsymbol{f}\|_{0,\Omega},
\end{equation}
where $C>0$.
%

%
%

\subsection{The adjoint eigenvalue problem}
With the aim of proving convergence and error estimates of the non-selfadjoint problem \eqref{def:oseen_system_weak_1}, it is necessary to invoke its associated adjoint problem. 
The strong form of the dual equations written in terms of the velocity and pressure  are given by
\begin{equation}\label{def:oseen-eigenvalue-dual}
	\left\{
	\begin{array}{rcll}
		-\nu\Delta \bu^* - \div(\bu^*\otimes\boldsymbol{\beta}) - \nabla p^*&=&\lambda^*\bu^*,&\text{in}\,\O,\\
		-\div \bu^*&=&0,&\text{in}\,\O,\\
		\displaystyle\int_{\O} p^* &=&0, &\text{in}\,\O,\\
		\bu^* &=&\boldsymbol{0},&\text{in}\,\partial\O.
	\end{array}
	\right.
\end{equation}
Now, the pseudostress formulation of \eqref{def:oseen-eigenvalue-dual} is
\begin{equation}\label{def:oseen-eigenvalue-dual-pseudo} 
	\left\{
	\begin{array}{rcll}
		\boldsymbol{\sigma}^{*,\texttt{d}}-(\bu^*\otimes\boldsymbol{\beta})^{\texttt{d}}&=&\nu\nabla\bu^*,&\text{in}\,\O,\\
		-\div \boldsymbol{\sigma}^*&=&\lambda^*\bu^*,&\text{in}\,\O,\\
		(\tr(\boldsymbol{\sigma}^*),1)_{0,\O} &=&(\tr(\bu^*\otimes\boldsymbol{\beta}),1)_{0,\O}, &\text{in}\,\O,\\
		\bu^* &=&\boldsymbol{0},&\text{in}\,\partial\O.
	\end{array}
	\right.
\end{equation}

Now, proceeding as in the primal case, we introduce the decomposition of 
$\mathbb{H}=\mathbb{H}_0\oplus \R\mathbb{I}$. The dual weak variational formulation, which is equivalent to the one obtained from Problem  \eqref{def:oseen-eigenvalue-dual-pseudo} is given as follows:
Find $\lambda^*\in\mathbb{C}$ and the pair $(\boldsymbol{0},\boldsymbol{0})\neq(\boldsymbol{\sigma}^*,\bu^*)\in\mathbb{H}_0\times\bQ$ such that  
\begin{equation}\label{def:oseen_system_weak_dual_eigen}
	\left\{
	\begin{array}{rcll}
\displaystyle\int_{\O}\boldsymbol{\sigma}^{*,\texttt{d}}:\boldsymbol{\tau}^{\texttt{d}}-\int_{\O}(\bu^*\otimes\boldsymbol{\beta})^{\texttt{d}}:\boldsymbol{\tau}+\int_{\O}\bdiv\boldsymbol{\tau}\cdot\bu^*&=&0&\forall \btau\in \mathbb{H}_{0},\\
\displaystyle\int_{\O}\bdiv\boldsymbol{\sigma}^*\cdot\bv&=&\displaystyle-\lambda^*\int_{\O}\bv\cdot\bu^*&\forall \bv\in \bQ,
\end{array}
	\right.
\end{equation}
which we can write in an abstract manner as
\begin{equation}\label{def:oseen_system_weak_dual_source1}
	\left\{
	\begin{array}{rcll}
a(\btau,\bsig^*)+b(\btau,\bu^*)-c(\btau,\bu^*)&=&0&\forall \btau\in \mathbb{H}_0,\\
b(\bsig^*,\bv)&=&-\lambda^*(\bv,\bu^*)&\forall \bv\in \bQ.
\end{array}
	\right.
\end{equation}
Now we introduce the adjoint of \eqref{eq:operador_solucion_u} defined  by 
\begin{equation}\label{eq:operador_adjunto_solucion_u}
	\bT^*:\bQ\rightarrow \bQ,\qquad \boldsymbol{f}\mapsto \bT\boldsymbol{f}:=\widehat{\bu}^*, 
\end{equation} 
where $\widehat{\bu}^*\in\L^2(\O,\mathbb{C})$ is the adjoint velocity of $\widehat{\bu}$ and solves the following adjoint source  problem: Find  
$(\widehat{\bsig}^*, \widehat{\bu}^*)\in\mathbb{H}_0\times\bQ$ such that 
\begin{equation}\label{def:oseen_system_weak_dual_source11}
	\left\{
	\begin{array}{rcll}
a(\btau,\widehat{\bsig}^*)+b(\btau,\widehat{\bu}^*)-c(\btau,\widehat{\bu}^*)&=&0&\forall \btau\in \mathbb{H}_0,\\
b(\widehat{\bsig}^*,\bv)&=&-(\bv,\boldsymbol{f})&\forall \bv\in \bQ.
\end{array}
	\right.
\end{equation}
To prove that $\bT^*$ is well defined, we can follow the same arguments utilized for the well posedness of $\bT$. So, in order to simplify the presentation of our results, we skip the details.

Similar to Remark \ref{remark-del-uhat-sigmahat}, we have that the dual source and eigenvalue problems are such that the following regularity  holds: $\bsig^*\in \mathbb{H}^{s^*}(\O)$ and $\bu^*\in  \boldsymbol{\H}^{1+s^*}(\Omega)$ with $s^*>0$.

 As with the primal problem, we will assume the following:
 Given $\boldsymbol{f}\in\bQ$ sufficiently small, then there exists $C>0$, such that 
\begin{equation}
	\label{eq:reg_u_if*}
	\|\widehat{\bsig}^*\|_{s^{*},\O}+\|\widehat{\bu}^*\|_{1+s^{*},\Omega}\leq C\|\boldsymbol{f}\|_{0,\Omega}.
\end{equation}

The spectral characterization result for the $\bT^*$ operator is presented below.
\begin{lemma}(Spectral Characterization of $\bT^*$).
The spectrum of $\bT^*$ is such that $\sp(\bT^*)=\{0\}\cup\{\kappa_{k}^*\}_{k\in{N}}$ where $\{\kappa_{k}^*\}_{k\in\mathbf{N}}$ is a sequence of complex eigenvalues that converge to zero, according to their respective multiplicities. 
\end{lemma}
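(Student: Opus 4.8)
The plan is to mirror the spectral characterization of the primal operator $\bT$ recorded in Remark \ref{remark-del-uhat-sigmahat}, since $\bT^*$ is built from the source problem \eqref{def:oseen_system_weak_dual_source11}, which has exactly the same structure as the primal one. Throughout I take for granted that $\bT^*$ is well defined and bounded, which the excerpt already asserts follows from the same Babu\v{s}ka--Brezzi arguments used for $\bT$.

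First I would establish that $\bT^*$ is compact. By the assumed regularity \eqref{eq:reg_u_if*}, for every $\bF\in\bQ$ the adjoint velocity $\widehat{\bu}^*=\bT^*\bF$ belongs to $\boldsymbol{\H}^{1+s^*}(\O)$ with $\|\bT^*\bF\|_{1+s^*,\O}\leq C\|\bF\|_{0,\O}$, so $\bT^*$ maps $\bQ$ boundedly into $\boldsymbol{\H}^{1+s^*}(\O)$. Since $s^*>0$ and $\O$ is a bounded Lipschitz domain, the Rellich--Kondrachov embedding $\boldsymbol{\H}^{1+s^*}(\O)\hookrightarrow\boldsymbol{\L}^2(\O)=\bQ$ is compact; composing the bounded map with this compact embedding shows that $\bT^*\colon\bQ\to\bQ$ is compact.

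Next I would apply the spectral theory of compact operators as developed in \cite{MR1115235}: the spectrum of a compact operator on the infinite-dimensional Hilbert space $\bQ$ consists of $\{0\}$ together with an at most countable set of nonzero eigenvalues of finite algebraic multiplicity whose only possible accumulation point is $0$. To see that this set is genuinely an infinite sequence, and to identify its elements, I would use that $\bT^*$ is precisely the Hilbert-space adjoint of $\bT$ in $\bQ$: testing the primal system \eqref{def:oseen_system_source} (with data $\bF$) against the adjoint solution and the adjoint system \eqref{def:oseen_system_weak_dual_source11} (with data $\bg$) against the primal solution, and using the symmetry of $a(\cdot,\cdot)$ together with the compatible sign and argument-ordering of $c(\cdot,\cdot)$ in \eqref{def:oseen_system_weak_dual_source1}, yields the duality identity $(\bT\bF,\bg)_{0,\O}=(\bF,\bT^*\bg)_{0,\O}$ for all $\bF,\bg\in\bQ$. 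From this, the standard relation $\sp(\bT^*)=\overline{\sp(\bT)}$ transfers the primal characterization $\sp(\bT)=\{0\}\cup\{\mu_k\}_{k\in\mathbb{N}}$ of Remark \ref{remark-del-uhat-sigmahat} verbatim, with $\kappa_k^*:=\overline{\mu_k}$, giving a genuinely infinite sequence of complex eigenvalues converging to $0$ with matching multiplicities.

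The main obstacle is the duality identity establishing that $\bT^*$ is the adjoint of $\bT$. One must carefully track the sesquilinear (conjugate-linear second argument) structure of the forms and, above all, the fact that the convective form $c(\cdot,\cdot)$ enters the adjoint formulation \eqref{def:oseen_system_weak_dual_source1} with its arguments transposed and with opposite sign relative to the primal problem; it is exactly this sign-and-transposition convention that makes the cross-terms cancel and delivers $(\bT\bF,\bg)_{0,\O}=(\bF,\bT^*\bg)_{0,\O}$. Every other step is a direct application of compact-operator theory. If one prefers to avoid the duality bookkeeping entirely, compactness already follows from \eqref{eq:reg_u_if*} and Rellich--Kondrachov as above, and the adjoint relation is then needed only to confirm that the eigenvalue sequence is infinite rather than finite.
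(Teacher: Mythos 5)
Your proposal is correct and follows essentially the same route the paper intends: the paper leaves this lemma's proof implicit, relying on the regularity \eqref{eq:reg_u_if*} and the compact embedding $\boldsymbol{\H}^{1+s^*}(\O)\hookrightarrow\boldsymbol{\L}^2(\O)$ to get compactness of $\bT^*$ exactly as in Remark \ref{remark-del-uhat-sigmahat}, and then invokes the spectral theory of compact operators. Your additional duality identity $(\bT\boldsymbol{f},\bg)_{0,\O}=(\boldsymbol{f},\bT^*\bg)_{0,\O}$ is precisely the content the paper records separately right after the lemma, so it is consistent with, not divergent from, the authors' argument.
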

It is easy to prove that if $\kappa$ is an eigenvalue of $\bT$ with multiplicity $m$, $\overline{\kappa^*}$ is an eigenvalue of $\bT^*$ with the same multiplicity $m$.
\section{The finite element method}
\label{sec:fem}

Now we present the finite element spaces  involved in our numerical methods. We remark that our methods differ when the pseuodstress tensor is approximated, since the velocity will be approximated  with  basic piecewise polynomial spaces. To make matter precise,  from now and on we 
only refer to $\mathbb{H}_h$ to the finite element space related to the approximation of $\bsig$ whereas  for the velocity field,
we consider the space $\bQ_h$. Let us define the aforementioned spaces.  For $k\geq 0$, we define the vector field of piecewise polynomials of degree at most $k$
$$
	\mathbf{P}_k(\CT_h):=\{v\in\mathbf{L}^2(\O)\,:\, v|_T\in\text{P}_k(T)\,\,\forall T\in\CT_h\},
$$

We define the local Raviart-Thomas space of order $k$ as follows  (see \cite{MR3097958}) $\mathbf{RT}_k(T)=\mathbf{P}_k(T)\oplus \text{P}_k(T)\boldsymbol{x}$,
where if $\texttt{t}$ denotes the transpose operator,  $\boldsymbol{x}^{\texttt{t}}$ represents a generic vector of $\mathbb{R}^n$. Hence, the global Raviart-Thomas (RT) is defined by
$$
	\mathbb{RT}_k(\CT_h):=\{\btau\in\mathbb{H} \,:\, \btau|_T^{\texttt{t}}\in\mathbf{RT}_k(T),\,\,\forall T\in\CT_h\}.
$$

In the definition above $\btau|_T^{\texttt{t}}$ must be understood as $(\tau_{i1},\tau_{i2})^{\texttt{t}}\in\mathbf{RT}_k(T)$ for all $i\in\{1,2\}$ when $n=2$, and $(\tau_{j1},\tau_{j2}, \tau_{j3})^{\texttt{t}}\in\mathbf{RT}_k(T)$ for all $j\in\{1,2,3\}$ when $n=3$.

together with the  Brezzi-Douglas-Marini (BDM) finite element space \cite{MR799685},
$$
	\mathbb{BDM}_{k}:=\mathbf{P}_k(\CT_h)\cap\mathbb{H} \text{ with } k\geq 1.
$$

Now we introduce the following space
$$
	\mathbb{H}_{0,h}:=\left\{\btau\in\mathbb{H}_h\,:\,\,\int_{\Omega}\tr(\btau)=0 \right\},
$$
where $\mathbb{H}_h\in\{\mathbb{RT}_k,\mathbb{BDM}_{k+1}\}$. Also, we define  $\bQ_h:=\mathbf{P}_k(\CT_h)$.

\subsection{The discrete eigenvalue problem}\label{subsec:the-discrete-eigenvalue-problem}
With the finite elements described above, now we introduce the discrete versions of \eqref{def:oseen_system_weak_1_H0} and 
\eqref{def:oseen_system_weak_dual_source1}.

The discretization of \eqref{def:oseen_system_weak_1_H0} reads as follows:  Find $\lambda_h\in\mathbb{C}$ and $(\boldsymbol{0},0)\neq(\bsig_h, \bu_h)\in \mathbb{H}_{0,h}\times\bQ_h$ such that 
\begin{equation}\label{def:oseen_system_weak_1_H0_disc}
	\left\{
	\begin{array}{rcll}
a(\bsig_h,\btau_h)+ b(\btau_h,\bu_h)+c(\bu_h,\btau_h)&=&0&\forall \btau_h\in \mathbb{H}_{0,h},\\

 b(\bsig_h, \bv_h)&=&-\lambda_h(\bu_h,\bv_h)_{0,\O}&\forall \bv_h\in \bQ_h.
\end{array}
	\right.
\end{equation}
Now we introduce the discrete version of the  solution operator $\bT$ which we denote by $\bT_h$
\begin{equation}\label{eq:operador_solucion_uh}
	\bT_h:\bQ\rightarrow \bQ_h,\qquad \boldsymbol{f}\mapsto \bT_h\boldsymbol{f}:=\widehat{\bu}_h, 
\end{equation}
where the pair  $(\widehat{\bsig}_h, \widehat{\bu}_h)\in\mathbb{H}_{0,h}\times\bQ_h$ solves  the following  discrete  source problem
\begin{equation}\label{def:oseen_system_source_disc}
	\left\{
	\begin{array}{rcll}
a(\widehat{\bsig}_h,\btau_h)+ b(\btau_h,\widehat{\bu}_h)+c(\widehat{\bu}_h,\btau_h)&=&0&\forall \btau_h\in \mathbb{H}_{0,h},\\

 b(\widehat{\bsig}_h, \bv_h)&=&-(\boldsymbol{f},\bv_h)_{0,\O}&\forall \bv_h\in \bQ_h.
\end{array}
	\right.
\end{equation}

Let $\mathcal{K}_h$ be the discrete kernel of bilinear form $b(\cdot,\cdot)$, we have that as in the continuous case, the bilinear form $a(\cdot,\cdot)$ is coercive, i.e.
\begin{equation}
\dfrac{c_1}{\nu}\|\btau_h\|_{\bdiv,\O}^2\leq a(\btau_h,\btau_h),\qquad\forall\btau\in \mathcal{K}_h.
\end{equation}
On the other hand, the following discrete inf–sup condition holds.
\begin{equation}
\label{eq:inf-sup_discret}
\displaystyle\sup_{\btau_h\in\mathbb{H}_{0,h}}\frac{b(\btau_h,\boldsymbol{q}_h)}{\|\btau_h\|_{\bdiv,\O}}\geq\widehat{\gamma}\|\boldsymbol{q}_h\|_{0,\O}\quad\forall \boldsymbol{q}_h\in\bQ_h.
\end{equation}
The following bilinear form, which does not contain the convective term, is defined as follows: 
\begin{equation}
\label{eq:formB}
B((\bsig_h,\bu_h);(\btau_h,\bv_h)):=a(\bsig_h,\btau_h)+ b(\btau_h,\bu_h)+b(\bsig_h,\bv_h).
\end{equation}
In the following result, we establish an inf-sup condition for $B((\cdot,\cdot);(\cdot,\cdot))$, which will be instrumental to ensure the uniqueness and convergence of the discrete solution.
\begin{lemma}\label{inf-sup_B}
For each $(\bsig_h,\bu_h)\in \mathbb{H}_{0,h}\times \bQ_h$, there exists $(\btau_h,\bv_h)\in \mathbb{H}_{0,h}\times \bQ_h$ with 
$\vertiii{(\btau_h,\bv_h)} \leq C\vertiii{(\bsig_h,\bu_h)},$ and a constant $C_J>0$, depending on $\nu$,  such that
\begin{equation}
\vertiii{(\bsig_h,\bu_h)}^2\leq C_J B((\bsig_h,\bu_h);(\btau_h,\bv_h)).
\end{equation}
\end{lemma}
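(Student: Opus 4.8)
The plan is to recognize that $B(\cdot;\cdot)$ is the symmetric saddle-point form obtained by dropping the convective term $c(\cdot,\cdot)$, so that the ingredients already at hand---the control of the deviatoric part through $a(\cdot,\cdot)$ via Lemma~\ref{lmm:cota}, together with the discrete inf-sup condition \eqref{eq:inf-sup_discret}---are exactly the Brezzi conditions that guarantee global stability. Rather than quote an abstract theorem, I would build the test pair $(\btau_h,\bv_h)$ explicitly, since the statement also asks for the continuity bound $\vertiii{(\btau_h,\bv_h)}\le C\vertiii{(\bsig_h,\bu_h)}$.

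First I would invoke \eqref{eq:inf-sup_discret}: given $\bu_h\in\bQ_h$, the supremum is attained in the finite-dimensional setting, so there exists $\widetilde\btau_h\in\mathbb{H}_{0,h}$ with $\|\widetilde\btau_h\|_{\bdiv,\O}=\|\bu_h\|_{0,\O}$ and $b(\widetilde\btau_h,\bu_h)\ge\widehat\gamma\|\bu_h\|_{0,\O}^2$. Then I would test with $\btau_h:=\bsig_h+\delta\,\widetilde\btau_h$ and $\bv_h:=-\bu_h+\eta\,\bdiv\bsig_h$ for parameters $\delta,\eta>0$ to be fixed later. Here I use the structural property of the Raviart--Thomas and Brezzi--Douglas--Marini spaces that $\bdiv\bsig_h\in\mathbf{P}_k(\CT_h)=\bQ_h$, which makes $\bv_h$ an admissible test function.

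Next I would expand $B((\bsig_h,\bu_h);(\btau_h,\bv_h))$. The two copies of $b(\bsig_h,\bu_h)$ arising from $b(\btau_h,\bu_h)$ and $b(\bsig_h,\bv_h)$ cancel, leaving $a(\bsig_h,\bsig_h)+\eta\|\bdiv\bsig_h\|_{0,\O}^2+\delta\,b(\widetilde\btau_h,\bu_h)+\delta\,a(\bsig_h,\widetilde\btau_h)$. The crucial point is that $a(\bsig_h,\bsig_h)=\tfrac1\nu\|\bsig_h^{\dd}\|_{0,\O}^2$ only controls the deviatoric part; combining it with the term $\eta\|\bdiv\bsig_h\|_{0,\O}^2$ and applying Lemma~\ref{lmm:cota} to $\bsig_h\in\mathbb{H}_{0,h}\subset\mathbb{H}_0$ recovers the full norm $\|\bsig_h\|_{\bdiv,\O}^2$, while $\delta\,b(\widetilde\btau_h,\bu_h)$ yields $\delta\widehat\gamma\|\bu_h\|_{0,\O}^2$. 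The only indefinite contribution is the cross term $\delta\,a(\bsig_h,\widetilde\btau_h)$, which I would bound by $\tfrac\delta\nu\|\bsig_h\|_{\bdiv,\O}\|\bu_h\|_{0,\O}$ and split by Young's inequality, so that a small multiple of $\|\bsig_h\|_{\bdiv,\O}^2$ is absorbed into the deviatoric/divergence contribution and a small multiple of $\|\bu_h\|_{0,\O}^2$ into the inf-sup contribution.

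The main obstacle, and the only genuinely delicate step, is the bookkeeping of the constants: one must take $\delta$ small enough (relative to $\widehat\gamma$, $\nu$, and $c_1$) and $\eta$ of order $1/\nu$ so that, after absorbing the cross term, the remainder is a fixed positive multiple of $\|\bsig_h\|_{\bdiv,\O}^2+\|\bu_h\|_{0,\O}^2=\vertiii{(\bsig_h,\bu_h)}^2$; this determines $C_J$ and its dependence on $\nu$. Finally, the continuity of the chosen pair follows from the triangle inequality together with $\|\widetilde\btau_h\|_{\bdiv,\O}=\|\bu_h\|_{0,\O}$ and $\|\bdiv\bsig_h\|_{0,\O}\le\|\bsig_h\|_{\bdiv,\O}$, giving $\vertiii{(\btau_h,\bv_h)}\le C\vertiii{(\bsig_h,\bu_h)}$ with $C$ depending on $\delta$ and $\eta$.
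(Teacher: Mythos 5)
Your proof is correct, but it is not the route the paper takes, and the difference is worth spelling out. The paper perturbs only the stress test function: it chooses $(\btau_h,\bv_h)=(\bsig_h-\delta\widetilde{\bsig}_h,-\bu_h)$, where $\widetilde{\bsig}_h$ with $\bdiv\widetilde{\bsig}_h=-\bu_h$ comes from the discrete inf-sup condition, and it extracts the full norm $\|\bsig_h\|_{\bdiv,\O}^2$ from the single term $a(\bsig_h,\bsig_h)$ by invoking $a(\bsig_h,\bsig_h)\geq \frac{c_1}{\nu}\|\bsig_h\|_{\bdiv,\O}^2$. That inequality is the coercivity of $a(\cdot,\cdot)$ on the discrete kernel $\mathcal{K}_h$; for an arbitrary $\bsig_h\in\mathbb{H}_{0,h}$, Lemma~\ref{lmm:cota} only yields $c_1\|\bsig_h\|_{\bdiv,\O}^2\leq\|\bsig_h^{\dd}\|_{0,\O}^2+\|\bdiv\bsig_h\|_{0,\O}^2$, and $a(\bsig_h,\bsig_h)=\frac{1}{\nu}\|\bsig_h^{\dd}\|_{0,\O}^2$ by itself does not control $\|\bdiv\bsig_h\|_{0,\O}^2$. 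Your extra component $\eta\,\bdiv\bsig_h$ in $\bv_h$ --- admissible precisely because $\bdiv\mathbb{H}_h\subset\mathbf{P}_k(\CT_h)=\bQ_h$ for both the Raviart--Thomas and Brezzi--Douglas--Marini families --- supplies the missing term $\eta\|\bdiv\bsig_h\|_{0,\O}^2$ through $b(\bsig_h,\bv_h)$, after which Lemma~\ref{lmm:cota} applies legitimately to recover the full $\mathbb{H}(\bdiv,\O)$-norm. The remaining ingredients (the inf-sup corrector producing $\delta\widehat{\gamma}\|\bu_h\|_{0,\O}^2$, Young's inequality to absorb the cross term $\delta\,a(\bsig_h,\widetilde{\btau}_h)$, and the continuity bound on the test pair) coincide with the paper's up to the immaterial sign and normalization of the corrector. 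In short, yours is the standard three-term test-function construction for dual-mixed formulations, and it is the more robust of the two arguments: it buys validity for every $(\bsig_h,\bu_h)$, where the paper's shorter version leans on kernel coercivity outside the kernel.
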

\begin{proof}
The arguments for the proof  we exploit the discrete inf-sup condition of $b(\cdot,\cdot)$ and the ellipticity of $a(\cdot,\cdot)$, together with appropriate test functions.

Let us start by taking $(\btau_h,\bv_h)=(\bsig_h,-\bu_h)$ to obtain
\begin{equation}\label{eq:lema-inf-sup-000}
	B((\bsig_h,\bu_h),(\bsig_h,-\bu_h))=a(\bsig_h,\bsig_h)\geq \frac{c_1}{\nu}\Vert\bsig_h\Vert_{\bdiv,\Omega}^2.
\end{equation}
From the inf-sup condition \eqref{eq:inf-sup_discret}, given $\bu_h\in\bQ_h$, there exists $\widetilde{\bsig}_h\in\mathbb{H}_{0,h}$ such that 
\begin{equation}\label{eq:lema-inf-sup-001}
\bdiv \widetilde{\bsig}_h=-\bu_h,\quad\mbox{and}\quad \Vert \widetilde{\bsig}_h\Vert_{\bdiv,\Omega}\leq \widehat{\gamma}^{-1}\Vert \bu_h\Vert_{0,\Omega}.
\end{equation}
By taking $(\btau_h,\bv_h)=(-\widetilde{\bsig}_h,\boldsymbol{0})$, using Cauchy–Schwarz inequality, the continuity of $a(\cdot,\cdot)$ and \eqref{eq:lema-inf-sup-001} we obtain
\begin{equation}
\label{eq:lema-inf-sup-002}
\begin{aligned}
	B((\bsig_h,\bu_h),(-\widetilde{\bsig}_h,\boldsymbol{0}))&= -a(\bsig_h,\widetilde{\bsig}_h) - b(\widetilde{\bsig}_h,\bu_h)\\
	&\geq -\frac{1}{\nu}\left(\Vert\bsig_h\Vert_{\bdiv,\Omega}\Vert\widetilde{\bsig}_h\Vert_{\bdiv,\Omega}\right) + \Vert \bu_h\Vert_{0,\Omega}^2\\
	&\geq -\frac{1}{\nu}\left(\frac{\varepsilon}{2}\Vert\bsig_h\Vert_{\bdiv,\Omega}^2 + \frac{\widehat{\gamma}^{-2}}{2\varepsilon}\Vert\bu_h\Vert_{0,\Omega}^2\right) + \Vert \bu_h\Vert_{0,\Omega}^2\\
	&= -\frac{\varepsilon}{2\nu}\Vert\bsig_h\Vert_{\bdiv,\Omega}^2 + \left(1- \frac{\widehat{\gamma}^{-2}}{2\nu\varepsilon}\right)\Vert \bu_h\Vert_{0,\Omega}^2,
\end{aligned}
\end{equation}
for all $\varepsilon>0$.
Let $\delta>0$ and consider the test functions $(\btau_h,\bv_h)=(\bsig_h-\delta\widetilde{\bsig}_h,-\bu_h)$. Then, from \eqref{eq:lema-inf-sup-000}-\eqref{eq:lema-inf-sup-001} and \eqref{eq:lema-inf-sup-002} we have
$$
\begin{aligned}
B((\bsig_h,\bu_h),(\bsig_h-\delta\widetilde{\bsig}_h,-\bu_h&)) = a(\bsig_h,\bsig_h-\delta\widetilde{\bsig}_h) + b(\bsig_h-\delta\widetilde{\bsig}_h,\bu_h) + b(\bsig_h,-\bu_h)\\
&\geq \frac{c_1}{\nu}\Vert \bsig_h\Vert_{\bdiv,\Omega}^2 + \delta\left[-a(\bsig_h,\widetilde{\bsig}_h) - b(\widetilde{\bsig}_h,\bu_h)\right]\\
&\geq \frac{c_1}{\nu}\Vert \bsig_h\Vert_{\bdiv,\Omega}^2 - \frac{\delta\varepsilon}{2\nu}\Vert\bsig_h\Vert_{\bdiv,\Omega}^2  + \delta\left(1- \frac{\widehat{\gamma}^{-2}}{2\nu\varepsilon}\right)\Vert \bu_h\Vert_{0,\Omega}^2\\
& =  \frac{1}{\nu}\left(c_1 - \frac{\delta\varepsilon}{2}\right)\Vert \bsig_h\Vert_{\bdiv,\Omega}^2  + \delta\left(1- \frac{\widehat{\gamma}^{-2}}{2\nu\varepsilon}\right)\Vert \bu_h\Vert_{0,\Omega}^2.
\end{aligned}
$$
Choosing $\varepsilon=\widehat{\gamma}^{-2}/\nu$ and $\delta= c_1\widehat{\gamma}^2\nu$ it follows that
$$
B((\bsig_h,\bu_h),(\bsig_h-\delta\widetilde{\bsig}_h,-\bu_h))\geq \frac{c_1}{2\nu}\Vert\bsig_h\Vert_{\bdiv,\Omega}^2 + \frac{c_1\widehat{\gamma}^2\nu}{2}\Vert\bu_h\Vert_{0,\Omega}^2\geq C_J\vertiii{(\bsig_h,\bu_h)}^2,
$$
with $C_J:=\min\left\{\dfrac{c_1}{2\nu},\dfrac{c_1\widehat{\gamma}^2\nu}{2}\right\}$.  Moreover, we have
$$
\vertiii{(\bsig_h-\delta\widetilde{\bsig}_h,-\bu_h)}^2= \Vert \bsig_h - \delta\widetilde{\bsig}_h\Vert_{\bdiv,\Omega}^2 + \Vert \bu_h\Vert_{0,\Omega}^2\leq \max\{1,c_1^2\widehat{\gamma}^2\nu^2 \}\vertiii{(\bsig_h,\bu_h)}^2.
$$
The proof is completed.
\end{proof}

It now corresponds to discuss the existence and uniqueness of solutions for the source problem \eqref{def:oseen_system_source_disc}. 
The proof of the existence of a solution follows analogously as in the continuous case, but taking into consideration the discrete inf-sup constant. 
On the other hand, uniqueness follows is stated in  following result.
\begin{lemma}
\label{lmm:gonzalo1}
Let $(\widehat{\bsig}_h,\widehat{\bu}_h)\in \mathbb{H}_{0,h}\times \bQ_h$ be a solution of problem \eqref{def:oseen_system_source_disc}. If \linebreak $C_J\dfrac{\|\boldsymbol{\beta}\|_{\infty}}{\nu}<1$, where the constant $C_J$ is the one provided by Lemma \ref{inf-sup_B}, then $(\widehat{\bsig}_h,\widehat{\bu}_h)\in \mathbb{H}_{0,h}\times \bQ_h$ is unique.
\end{lemma}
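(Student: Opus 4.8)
The plan is a contradiction/energy argument resting on the inf-sup stability of $B$ from Lemma \ref{inf-sup_B}. Since $a$, $b$ and $c$ are all bilinear, it suffices to prove that the homogeneous problem has only the trivial solution: taking the difference of two solutions of \eqref{def:oseen_system_source_disc} cancels the data $\boldsymbol{f}$, so uniqueness is equivalent to showing that any $(\bsig_h,\bu_h)\in\mathbb{H}_{0,h}\times\bQ_h$ satisfying $a(\bsig_h,\btau_h)+b(\btau_h,\bu_h)+c(\bu_h,\btau_h)=0$ for all $\btau_h\in\mathbb{H}_{0,h}$ and $b(\bsig_h,\bv_h)=0$ for all $\bv_h\in\bQ_h$ must vanish.

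First I would recast this homogeneous system through the form $B$ of \eqref{eq:formB}. Adding the first equation (tested with $\btau_h$) to the second (tested with $\bv_h$) and recognizing that $a(\bsig_h,\btau_h)+b(\btau_h,\bu_h)+b(\bsig_h,\bv_h)$ is exactly $B((\bsig_h,\bu_h);(\btau_h,\bv_h))$, the convective term is isolated on the right, giving the identity $B((\bsig_h,\bu_h);(\btau_h,\bv_h))=-c(\bu_h,\btau_h)$ for every $(\btau_h,\bv_h)\in\mathbb{H}_{0,h}\times\bQ_h$.

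Next I would apply Lemma \ref{inf-sup_B} to $(\bsig_h,\bu_h)$, obtaining a test pair $(\btau_h,\bv_h)$ with $\vertiii{(\btau_h,\bv_h)}\leq C\vertiii{(\bsig_h,\bu_h)}$ and $\vertiii{(\bsig_h,\bu_h)}^2\leq C_J\,B((\bsig_h,\bu_h);(\btau_h,\bv_h))$. Substituting the identity above yields $\vertiii{(\bsig_h,\bu_h)}^2\leq -C_J\,c(\bu_h,\btau_h)$. Then I would bound the convective form by its continuity: since the deviatoric operator does not increase the Frobenius norm and $|\bu_h\otimes\boldsymbol{\beta}|\leq\|\boldsymbol{\beta}\|_{\infty,\O}\,|\bu_h|$ pointwise, one gets $|c(\bu_h,\btau_h)|\leq\frac{\|\boldsymbol{\beta}\|_{\infty,\O}}{\nu}\|\bu_h\|_{0,\O}\|\btau_h\|_{\bdiv,\O}\leq\frac{\|\boldsymbol{\beta}\|_{\infty,\O}}{\nu}\vertiii{(\bsig_h,\bu_h)}\,\vertiii{(\btau_h,\bv_h)}$. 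Using the norm bound on the test pair produces $\vertiii{(\bsig_h,\bu_h)}^2\leq C_J\frac{\|\boldsymbol{\beta}\|_{\infty}}{\nu}\vertiii{(\bsig_h,\bu_h)}^2$, and the smallness hypothesis $C_J\|\boldsymbol{\beta}\|_{\infty}/\nu<1$ then forces $\vertiii{(\bsig_h,\bu_h)}=0$, i.e. $(\bsig_h,\bu_h)=(\boldsymbol{0},\boldsymbol{0})$.

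The only point requiring care is the tracking of constants: the boundedness factor $C$ in $\vertiii{(\btau_h,\bv_h)}\leq C\vertiii{(\bsig_h,\bu_h)}$ has to be absorbed into the constant supplied by Lemma \ref{inf-sup_B} so that the resulting contraction threshold coincides exactly with the stated condition $C_J\|\boldsymbol{\beta}\|_{\infty}/\nu<1$. The reduction to the homogeneous problem, the algebraic rewriting via $B$, and the continuity estimate for $c$ are all routine.
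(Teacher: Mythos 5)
Your proof is correct and follows essentially the same route as the paper: both apply Lemma \ref{inf-sup_B} to the difference of two solutions (equivalently, to the homogeneous problem), isolate the convective term $c(\cdot,\cdot)$, bound it by $\|\boldsymbol{\beta}\|_{\infty,\O}/\nu$ times the product of norms, and conclude via the smallness condition $C_J\|\boldsymbol{\beta}\|_{\infty}/\nu<1$. Your remark about tracking the stability constant $C$ of the test pair is a fair one — the paper's own proof tacitly takes it equal to one — but this does not change the substance of the argument.
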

\begin{proof}
Let $(\widehat{\bu}_h^1,\widehat{\bsig}_h^1)$ and $(\widehat{\bu}_h^2,\widehat{\bsig}_h^2)$ be two solutions of \eqref{def:oseen_system_source_disc}. Let us prove that they are equal. From Lemma \ref{inf-sup_B} we have that there exist $(\btau_h,\bv_h)\in\mathbb{H}_{0,h}\times \bQ_h$ and a positive constant $C_J$ such that $\vertiii{(\widehat{\bsig}_h,\widehat{\bu}_h)}^2\leq C_J B((\widehat{\bsig}_h,\widehat{\bu}_h),(\btau_h,\bv_h))$. Then,
$$\vertiii{(\btau_h,\bv_h)}\leq \vertiii{(\widehat{\bsig}_h^1-\widehat{\bsig}_h^2,\widehat{\bu}_h^1-\widehat{\bu}_h^2)}.$$
Hence
\begin{align*}
\vertiii{(\widehat{\bsig}_h^1-\widehat{\bsig}_h^2,\widehat{\bu}_h^1-\widehat{\bu}_h^2)}^2\leq& C_J[B((\widehat{\bsig}_h^1,\widehat{\bu}_h^1),(\btau_h,\bv_h))-B((\widehat{\bsig}_h^2,\widehat{\bu}_h^2),(\btau_h,\bv_h))]\\
\leq & C_J[-c(\widehat{\bu}_h^1,\btau_h)-(\boldsymbol{f},\bv_h)_{0,\O}+c(\widehat{\bu}_h^2,\btau_h)+(\boldsymbol{f},\bv_h)]\\
\leq& C_Jc(\widehat{\bu}_h^2-\widehat{\bu}_h^1,\btau_h)\leq C_J\frac{\|\boldsymbol{\beta}\|_{\infty,\O}}{\nu}\|\widehat{\bu}_h^2-\widehat{\bu}_h^2\|_{0,\O}\|\btau_h\|_{0,\O}.
\end{align*}
Then
\begin{align*}
\|\widehat{\bsig}_h^1-\widehat{\bsig}_h^2\|_{\bdiv,\O}^2+\|\bu_h^1-\bu_h^2\|_{0,\O}^2-C_J\frac{\|\boldsymbol{\beta}\|_{\infty,\O}}{\nu}\|\widehat{\bu}_h^1-\widehat{\bu}_h^2\|_{0,\O}\|\btau_h\|_{0,\O}&\leq 0\\
\Longrightarrow \left(1-C_J\frac{\|\boldsymbol{\beta}\|_{\infty,\O}}{\nu}\right)\left(\|\widehat{\bsig}_h^1-\widehat{\bsig}_h^2\|_{\bdiv,\O}^2+\|\widehat{\bu}_h^1-\widehat{\bu}_h^2\|_{0,\O}^2\right)&\leq 0
\end{align*}
Finally, if $\displaystyle C_J\frac{\|\boldsymbol{\beta}\|_{\infty,\O}}{\nu}<1$, we conclude the proof.
\end{proof}

As in the continuous case, it is easy to check that $(\lambda_h,(\bu_h,p_h))$ solves problem \eqref{def:oseen_system_weak_1_H0_disc} if and only if $(\kappa_h,\bu_h)$ is an eigenpair of  $\bT_h$.
\subsection{The discrete dual problem}
The dual discrete eigenvalue problem reads as follows: Find $\lambda^*_h\in\mathbb{C}$ and the pair $(\boldsymbol{0},\boldsymbol{0})\neq(\bsig_h^*, \bu_h^*)\in\mathbb{H}_{0,h}\times\bQ_h$ such that  
\begin{equation}\label{def:oseen_system_weak_dual_source11_disc}
	\left\{
	\begin{array}{rcll}
a(\btau_h,\bsig_h^*)+b(\btau_h,\bu_h^*)-c(\btau_h,\bu_h^*)&=&0&\forall \btau_h\in \mathbb{H}_{0,h},\\
b(\bsig_h^*,\bv_h)&=&-\lambda_h^*(\bv_h,\boldsymbol{u}^*_h)&\forall \bv_h\in \bQ_h.
\end{array}
	\right.
\end{equation}
and  let us introduce the discrete version of \eqref{eq:operador_adjunto_solucion_u} which we define by $$\label{eq:operador_solucion_u_h_adjunto}
	\bT^*_h:\bQ\rightarrow \bQ_h,\qquad \boldsymbol{f}\mapsto \bT_h^*\boldsymbol{f}:=\widehat{\bu}_h^*, 
$$
where the pair $(\widehat{\bu}_h^*,\widehat{p}_h^*)\in\mathcal{X}_h$ is the solution of the following adjoint discrete source problem
\begin{equation}\label{def:oseen_system_weak_dual_source_disc}
	\left\{
	\begin{array}{rcll}
a(\btau_h,\widehat{\bsig}_h^*)+b(\btau_h,\widehat{\bu}_h^*)-c(\btau_h,\widehat{\bu}_h^*)&=&0&\forall \btau_h\in \mathbb{H}_{0,h},\\
b(\widehat{\bsig}_h^*,\bv_h)&=&-(\bv_h,\boldsymbol{f})&\forall \bv_h\in \bQ_h.
\end{array}
	\right.
\end{equation}

As in the continuous case, to prove that $\bT^*_h$ is well defined, we can follow the same arguments used for the goodness of $\bT_h$. So, we skip the details.
\subsection{Convergence for the solution operators}
Now, due to the compactness of $\bT$, we are able to prove that $\bT_h$ converge to $\bT$ as $h$ goes to zero in norm and under some assumptions on the data. This is contained in the following result.
\begin{lemma}
\label{lmm:conv1}
Let $\boldsymbol{f}\in \bQ$ be such that $\widehat{\bu}:=\boldsymbol{T}\boldsymbol{f}$ and $\widehat{\bu}_h:=\bT_h\boldsymbol{f}$, solutions of problems \eqref{def:oseen_system_source_00} and \eqref{def:oseen_system_source_disc} respectively, assume that $\widehat{\bu}\in \boldsymbol{\H}^{1+s}(\O)$ and $\widehat{\bsig}\in \mathbb{H}^s(\O)$ with $s>0$ and assume that the data is sufficiently small and that \eqref{eq:reg_u_if} holds. Then,  if $C_J\dfrac{\|\boldsymbol{\beta}\|_{\infty}}{\nu}<1$, there exists a positive constant $\widehat{C}_{\nu,\beta}$, independent of $h$, such that 
$$
\vertiii{(\widehat{\bsig}-\widehat{\bsig}_h,(\bT-\bT_h)\boldsymbol{f})}\leq  \widehat{C}_{\nu,\beta}h^s\left(\|\widehat{\bsig}\|_{s,\O}+\|\widehat{\bu}\|_{1+s,\O}\right)\leq  \widehat{C}_{\nu,\beta}h^s\|\boldsymbol{f}\|_{0,\O}.
$$
\end{lemma}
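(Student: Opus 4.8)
The plan is to run a standard Strang/Céa-type argument for mixed finite elements, in which the global discrete inf–sup stability of $B((\cdot,\cdot);(\cdot,\cdot))$ furnished by Lemma~\ref{inf-sup_B} plays the role of coercivity, and the convective form $c(\cdot,\cdot)$ —which is deliberately \emph{absent} from $B$— is treated as a consistency defect to be absorbed through the smallness hypothesis $C_J\|\boldsymbol{\beta}\|_{\infty,\O}/\nu<1$. First I would introduce the tensorial RT/BDM interpolant $\bPi_h\widehat\bsig$, suitably corrected by a multiple of $\mathbb{I}$ so that it respects the zero-mean-trace constraint of $\mathbb{H}_{0,h}$, together with the $\boldsymbol{\L}^2$-orthogonal projection $\cP_h\widehat\bu\in\bQ_h$. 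The two orthogonalities that make these elements convenient are the commuting property $\bdiv\bPi_h=\cP_h\bdiv$, which yields $b(\widehat\bsig-\bPi_h\widehat\bsig,\bv_h)=0$ for all $\bv_h\in\bQ_h$, and $b(\btau_h,\widehat\bu-\cP_h\widehat\bu)=0$ for all $\btau_h\in\mathbb{H}_{0,h}$ (because $\bdiv\btau_h\in\bQ_h$). I split the error with the triangle inequality into the interpolation part $\vertiii{(\widehat\bsig-\bPi_h\widehat\bsig,\widehat\bu-\cP_h\widehat\bu)}$ and the purely discrete part $\vertiii{(\boldsymbol{E}_\bsig,\boldsymbol{E}_\bu)}$, where $\boldsymbol{E}_\bsig:=\bPi_h\widehat\bsig-\widehat\bsig_h$ and $\boldsymbol{E}_\bu:=\cP_h\widehat\bu-\widehat\bu_h$ both lie in the discrete spaces.

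Next I would establish Galerkin orthogonality: subtracting \eqref{def:oseen_system_source_disc} from \eqref{def:oseen_system_source} tested against discrete functions gives, for all $(\btau_h,\bv_h)\in\mathbb{H}_{0,h}\times\bQ_h$, the identity $B((\widehat\bsig-\widehat\bsig_h,\widehat\bu-\widehat\bu_h);(\btau_h,\bv_h))=-c(\widehat\bu-\widehat\bu_h,\btau_h)$, the right-hand side being precisely the defect caused by the convective term not appearing in $B$. To control $\vertiii{(\boldsymbol{E}_\bsig,\boldsymbol{E}_\bu)}$, I apply Lemma~\ref{inf-sup_B} to the pair $(\boldsymbol{E}_\bsig,\boldsymbol{E}_\bu)$, obtaining a test function $(\btau_h,\bv_h)$ with $\vertiii{(\btau_h,\bv_h)}\leq C\vertiii{(\boldsymbol{E}_\bsig,\boldsymbol{E}_\bu)}$ and $\vertiii{(\boldsymbol{E}_\bsig,\boldsymbol{E}_\bu)}^2\leq C_J\,B((\boldsymbol{E}_\bsig,\boldsymbol{E}_\bu);(\btau_h,\bv_h))$. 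Decomposing $B((\boldsymbol{E}_\bsig,\boldsymbol{E}_\bu);\cdot)=B((\bPi_h\widehat\bsig-\widehat\bsig,\cP_h\widehat\bu-\widehat\bu);\cdot)+B((\widehat\bsig-\widehat\bsig_h,\widehat\bu-\widehat\bu_h);\cdot)$ and invoking the two orthogonality relations to cancel both $b$-contributions of the first summand, that summand collapses to $a(\bPi_h\widehat\bsig-\widehat\bsig,\btau_h)$, while the second summand equals $-c(\widehat\bu-\widehat\bu_h,\btau_h)$ by Galerkin orthogonality.

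At this point I would split the convective defect as $c(\widehat\bu-\widehat\bu_h,\btau_h)=c(\widehat\bu-\cP_h\widehat\bu,\btau_h)+c(\boldsymbol{E}_\bu,\btau_h)$ and apply the continuity bounds $|a(\cdot,\cdot)|\leq\nu^{-1}\|\cdot\|_{\bdiv,\O}\|\cdot\|_{\bdiv,\O}$ and $|c(\bv,\btau)|\leq\nu^{-1}\|\boldsymbol{\beta}\|_{\infty,\O}\|\bv\|_{0,\O}\|\btau\|_{0,\O}$, together with $\|\btau_h\|_{\bdiv,\O}\leq\vertiii{(\btau_h,\bv_h)}\leq C\vertiii{(\boldsymbol{E}_\bsig,\boldsymbol{E}_\bu)}$ and $\|\boldsymbol{E}_\bu\|_{0,\O}\leq\vertiii{(\boldsymbol{E}_\bsig,\boldsymbol{E}_\bu)}$. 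The decisive step —and the main obstacle— is the term $C_J\,c(\boldsymbol{E}_\bu,\btau_h)$, which is quadratic in $\vertiii{(\boldsymbol{E}_\bsig,\boldsymbol{E}_\bu)}$ and therefore of the same order as the left-hand side; exactly as in the uniqueness argument of Lemma~\ref{lmm:gonzalo1}, the hypothesis $C_J\|\boldsymbol{\beta}\|_{\infty,\O}/\nu<1$ is what licenses absorbing this contribution, leaving $\vertiii{(\boldsymbol{E}_\bsig,\boldsymbol{E}_\bu)}\leq C_{\nu,\beta}\big(\|\widehat\bsig-\bPi_h\widehat\bsig\|_{\bdiv,\O}+\|\widehat\bu-\cP_h\widehat\bu\|_{0,\O}\big)$.

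Finally, feeding this back into the triangle-inequality split bounds the total error by the interpolation errors alone, and I would close the estimate with the approximation properties of $\bPi_h$ and $\cP_h$ —recalling that $\bdiv(\widehat\bsig-\bPi_h\widehat\bsig)=(I-\cP_h)\bdiv\widehat\bsig$ through the commuting diagram— combined with the regularity $\widehat\bu\in\boldsymbol{\H}^{1+s}(\O)$ and $\widehat\bsig\in\mathbb{H}^s(\O)$, which produce the factor $h^s\big(\|\widehat\bsig\|_{s,\O}+\|\widehat\bu\|_{1+s,\O}\big)$; the second inequality in the statement is then immediate from the data-regularity assumption \eqref{eq:reg_u_if}. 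Beyond the absorption step, the remaining delicacy I anticipate is the careful bookkeeping of the $\nu$- and $\|\boldsymbol{\beta}\|_{\infty,\O}$-dependent constants so that the contraction is governed precisely by the stated smallness condition, and the verification that the commuting-diagram orthogonalities indeed annihilate the $b$-terms for the inf–sup test function selected above.
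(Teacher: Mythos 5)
Your proposal is correct and follows essentially the same route as the paper's proof: triangle-inequality splitting into interpolation and discrete errors, the discrete inf--sup of $B$ from Lemma~\ref{inf-sup_B} applied to the discrete error pair, Galerkin orthogonality reducing the consistency defect to the convective term, splitting that defect and absorbing its quadratic part via $C_J\|\boldsymbol{\beta}\|_{\infty,\O}/\nu<1$, and closing with approximation properties and \eqref{eq:reg_u_if}. The only (harmless) difference is that you invoke the commuting-diagram orthogonalities to collapse the interpolation-error contribution of $B$ to its $a$-part, whereas the paper simply bounds all three terms of $B$ by continuity.
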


\begin{proof}
Let $(\widehat{\bsig},\widehat{\bu})$ and $(\widehat{\bsig}_h,\widehat{\bu}_h)$ be the solutions 
of the continuous and discrete source problems, respectively,  and 
let $(\btau_h,\bv_h)$ be such that
$$\vertiii{(\btau_h,\bv_h)}\leq C_J\vertiii{(\widehat{\bsig}_I-\widehat{\bsig},\widehat{\bu}_I-\widehat{\bu})},$$ where 
$\widehat{\bsig}_I$ and $\widehat{\bu}_I$ are the best approximations of $\widehat{\bsig}$ and $\widehat{\bu}$, respectively.
Then, from triangle inequality we have
$$\vertiii{(\widehat{\bsig}-\widehat{\bsig}_h,\widehat{\bu}-\widehat{\bu}_h)}
\leq \vertiii{(\widehat{\bsig}-\widehat{\bsig}_I,\widehat{\bu}-\widehat{\bu}_I)}+
\vertiii{(\widehat{\bsig}_I-\widehat{\bsig}_h,\widehat{\bu}_I-\widehat{\bu}_h)}.$$

On the other hand
\begin{multline*}
\vertiii{(\widehat{\bsig}_I-\widehat{\bsig}_h,\widehat{\bu}_I-\widehat{\bu}_h)}^2
\leq C_JB((\widehat{\bsig}_I-\widehat{\bsig},\widehat{\bu}_I-\widehat{\bu})),(\btau_h,\bv_h))\\
+C_JB((\widehat{\bsig}-\widehat{\bsig}_h,\widehat{\bu}-\widehat{\bu}_h)),(\btau_h,\bv_h))\\
=B((\widehat{\bsig}_I-\widehat{\bsig},\widehat{\bu}_I-\widehat{\bu})),(\btau_h,\bv_h))
+C_J[-(\boldsymbol{f},\bv_h)-c(\widehat{\bu},\btau_h)-[-(\boldsymbol{f},\bv_h)-c(\widehat{\bu}_h,\btau_h)]]\\
=C_JB((\widehat{\bsig}_I-\widehat{\bsig},\widehat{\bu}_I-\widehat{\bu}),(\btau_h,\bv_h))+C_Jc(\widehat{\bu}_h-\widehat{\bu},\btau_h)\\
\leq C_JB((\widehat{\bsig}_I-\widehat{\bsig},\widehat{\bu}_I-\widehat{\bu}),(\btau_h,\bv_h))+C_J\frac{\|\boldsymbol{\beta}\|_{\infty,\O}}{\nu}\|\widehat{\bu}_h-\widehat{\bu}\|_{0,\O}\|\btau_h\|_{0,\O}\\
\leq C_JB((\widehat{\bsig}_I-\widehat{\bsig},\widehat{\bu}_I-\widehat{\bu}),(\btau_h,\bv_h))+C_J\frac{\|\boldsymbol{\beta}\|_{\infty,\O}}{\nu}(\|\widehat{\bu}_h-\widehat{\bu}_I\|_{0,\O}+\|\widehat{\bu}_I-\widehat{\bu}\|_{0,\O})\|\btau_h\|_{0,\O}.
\end{multline*}
From this inequality and proceeding as in the proof of Lemma \ref{lmm:gonzalo1} we have
\begin{multline*}
\left(\dfrac{1}{C_J}-\frac{\|\boldsymbol{\beta}\|_{\infty,\O}}{\nu}\right)(\|\widehat{\bsig}_I-\widehat{\bsig}_h\|_{\bdiv,\O}^2+\|\widehat{\bu}_I-\widehat{\bu}_h\|_{0,\O}^2)\leq B((\widehat{\bsig}_I-\widehat{\bsig},\widehat{\bu}_I-\widehat{\bu}),(\btau_h,\bv_h))\\
+\frac{\|\boldsymbol{\beta}\|_{\infty,\O}}{\nu}\|\widehat{\bu}_I-\widehat{\bu}\|_{0,\O}\|\btau_h\|_{0,\O}.
\end{multline*}
Let us focus our attention on the right-hand side of the inequality above. According to the definition of $B(\cdot,\cdot)$ (cf. \eqref{eq:formB}) we have
\begin{multline*}
B((\widehat{\bsig}_I-\widehat{\bsig},\widehat{\bu}_I-\widehat{\bu}),(\btau_h,\bv_h))+\frac{\|\boldsymbol{\beta}\|_{\infty,\O}}{\nu}\|\widehat{\bu}_I-\widehat{\bu}\|_{0,\O}\|\btau_h\|_{0,\O}\\
=a(\widehat{\bsig}_I-\widehat{\bsig},\btau_h)+b(\btau_h,\widehat{\bu}_I-\widehat{\bu})
+b(\widehat{\bsig}_I-\widehat{\bsig},\bv_h)+\frac{\|\boldsymbol{\beta}\|_{\infty,\O}}{\nu}\|\widehat{\bu}_I-\widehat{\bu}\|_{0,\O}\|\btau_h\|_{0,\O}\\
\leq\frac{1}{\nu}\|\widehat{\bsig}_I-\widehat{\bsig}\|_{\bdiv,\O}\|\btau_h\|_{\bdiv,\O}+
\|\btau_h\|_{\bdiv,\O}\|\widehat{\bu}_I-\widehat{\bu}\|_{0,\O}\\
+\|\widehat{\bsig}_I-\widehat{\bsig}\|_{\bdiv,\O}\|\bv_h\|_{0,\O}+\frac{\|\boldsymbol{\beta}\|_{\infty,\O}}{\nu}\|\widehat{\bu}_I-\widehat{\bu}\|_{0,\O}\|\btau_h\|_{0,\O}\\
\leq\underbrace{\max\left\{\frac{1}{\nu},\frac{\|\boldsymbol{\beta}\|_{\infty,\O}}{\nu}+1\right\}}_{C_{\nu,\boldsymbol{\beta}}}
(\|\widehat{\bsig}_I-\widehat{\bsig}\|_{\bdiv,\O}+\|\widehat{\bu}_I-\widehat{\bu}\|_{0,\O})\|\btau_h\|_{\bdiv,\O}\\
+\|\widehat{\bsig}_I-\widehat{\bsig}\|_{\bdiv,\O}\|\bv_h\|_{0,\O}\\
\leq C_{\nu,\boldsymbol{\beta}}(\|\widehat{\bsig}_I-\widehat{\bsig}\|_{\bdiv,\O}^2+\|\widehat{\bu}_I-\widehat{\bu}\|_{0,\O}^2)+\frac{C_{\nu,\boldsymbol{\beta}}}{2}\|\btau_h\|_{0,\O}^2+\frac{1}{2}(\|\widehat{\bsig}_I-\widehat{\bsig}\|_{\bdiv,\O}^2+\|\bv_h\|_{0,\O}^2)\\
\leq C_{\nu,\boldsymbol{\beta}}(\|\widehat{\bsig}_I-\widehat{\bsig}\|_{\bdiv,\O}^2+\|\widehat{\bu}_I-\widehat{\bu}\|_{0,\O}^2)+\frac{\max\{C_{\nu,\boldsymbol{\beta}},1\}}{2}(\|\btau_h\|_{\bdiv,\O}^2+\|\bv_h\|_{0,\O}^2)\\
+\frac{\|\widehat{\bsig}_I-\widehat{\bsig}\|_{\bdiv,\O}^2}{2}+\frac{\|\widehat{\bu}_I-\widehat{\bu}\|_{0,\O}^2}{2}\\
\leq\left(C_{\nu,\boldsymbol{\beta}}+\frac{\max\{C_{\nu,\boldsymbol{\beta}},1\}}{2}+\frac{1}{2}\right)\vertiii{(\widehat{\bsig}_I-\widehat{\bsig},\widehat{\bu}_I-\widehat{\bu})}^2.
\end{multline*}
In summary, we have
$$\vertiii{(\widehat{\bsig}-\widehat{\bsig}_h,\widehat{\bu}-\widehat{\bu}_h)}
\leq \widehat{C}_{\nu,\boldsymbol{\beta}}\vertiii{(\widehat{\bsig}-\widehat{\bsig}_I,\widehat{\bu}-\widehat{\bu}_I)},$$
where $\widehat{C}_{\nu,\boldsymbol{\beta}}=\max\left\{1,\sqrt{\dfrac{\left(C_{\nu,\boldsymbol{\beta}}+\frac{\max\{C_{\nu,\boldsymbol{\beta}},1\}}{2}+\frac{1}{2}\right)}{\left(\dfrac{1}{C_J}-\frac{\|\boldsymbol{\beta}\|_{\infty,\O}}{\nu}\right)}}\right\}$. 
Now using the approximation properties presented in \cite[Section 3.2]{MR4430561} we have
$$\vertiii{(\widehat{\bsig}-\widehat{\bsig}_h,\widehat{\bu}-\widehat{\bu}_h)}
\leq \widehat{C}_{\nu,\boldsymbol{\beta}}h^s\left(\|\widehat{\bsig}\|_{s,\O}+\|\widehat{\bu}\|_{1+s,\O}\right).$$
We conclude the proof invoking \eqref{eq:reg_u_if}.
\end{proof}

For the adjoint counterparts of $\bT$ and $\bT_h$, namely $\bT^*$ and $\bT_h^*$, 
we can prove convergence of   $\bT_h^*$  to $\bT^*$ as $h$ goes to zero. Also, this convergence result depends on some suitable choice on the data of the problem. Since the proof is essentially identical to Lemma \ref{lmm:conv1} we skip the steps of the proof.
\begin{lemma}
\label{eq:adjoint_diff}
Assume that \eqref{eq:reg_u_if*} holds. Then, there exists a constant $C>0$, independent of $h$, such that
$$
\vertiii{(\widehat{\bsig}^*-\widehat{\bsig}_h^*,(\bT^*-\bT_h^*)\boldsymbol{f})}\leq  \widehat{C}_{\nu,\beta}^*h^{s^*}\left(\|\widehat{\bsig}^*\|_{s^*,\O}+\|\widehat{\bu}^*\|_{1+s^*,\O}\right)\leq  \widehat{C}_{\nu,\beta}^*h^{s^*}\|\boldsymbol{f}\|_{0,\O}.
$$

\end{lemma}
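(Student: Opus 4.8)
The plan is to transcribe the proof of Lemma~\ref{lmm:conv1} to the adjoint setting, replacing the primal source problems \eqref{def:oseen_system_source_00}--\eqref{def:oseen_system_source_disc} by their adjoint counterparts \eqref{def:oseen_system_weak_dual_source11} and \eqref{def:oseen_system_weak_dual_source_disc}. First I would denote by $(\widehat{\bsig}^*,\widehat{\bu}^*)$ and $(\widehat{\bsig}_h^*,\widehat{\bu}_h^*)$ the solutions of the continuous and discrete adjoint source problems, and introduce the best approximations $\widehat{\bsig}_I^*\in\mathbb{H}_{0,h}$ and $\widehat{\bu}_I^*\in\bQ_h$. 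The key algebraic observation is that, because $a(\cdot,\cdot)$ is symmetric, the form $B(\cdot;\cdot)$ of \eqref{eq:formB} is symmetric; hence adding the two equations of the adjoint source problem produces the compact identity $B((\widehat{\bsig}^*,\widehat{\bu}^*);(\btau,\bv))=c(\btau,\widehat{\bu}^*)-(\bv,\boldsymbol{f})$, and likewise at the discrete level, so that the only contribution not captured by $B$ is the convective term $c(\btau,\widehat{\bu}^*)$, now appearing in its adjoint placement.

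Next, invoking the inf--sup condition of Lemma~\ref{inf-sup_B} (which applies from either slot by the symmetry of $B$), I would select $(\btau_h,\bv_h)\in\mathbb{H}_{0,h}\times\bQ_h$ with $\vertiii{(\btau_h,\bv_h)}\leq C_J\vertiii{(\widehat{\bsig}_I^*-\widehat{\bsig}^*,\widehat{\bu}_I^*-\widehat{\bu}^*)}$ and estimate $\vertiii{(\widehat{\bsig}_I^*-\widehat{\bsig}_h^*,\widehat{\bu}_I^*-\widehat{\bu}_h^*)}^2$ from below by $C_J^{-1}$ times the value of $B$ at $(\btau_h,\bv_h)$. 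Splitting through the continuous and discrete identities yields, exactly as in Lemma~\ref{lmm:conv1}, an approximation term $B((\widehat{\bsig}_I^*-\widehat{\bsig}^*,\widehat{\bu}_I^*-\widehat{\bu}^*);(\btau_h,\bv_h))$ plus the consistency term $C_J\,c(\btau_h,\widehat{\bu}^*-\widehat{\bu}_h^*)$; the data term $(\bv_h,\boldsymbol{f})$ cancels. Using the continuity bound $|c(\btau_h,\bw)|\leq \nu^{-1}\|\boldsymbol{\beta}\|_{\infty,\O}\|\bw\|_{0,\O}\|\btau_h\|_{0,\O}$ together with the triangle inequality $\|\widehat{\bu}^*-\widehat{\bu}_h^*\|_{0,\O}\leq\|\widehat{\bu}_I^*-\widehat{\bu}_h^*\|_{0,\O}+\|\widehat{\bu}^*-\widehat{\bu}_I^*\|_{0,\O}$, the discrete portion of the convective term is absorbed on the left under the same standing smallness hypothesis $C_J\|\boldsymbol{\beta}\|_{\infty,\O}/\nu<1$ that guarantees well-posedness of $\bT_h^*$, leaving the factor $\bigl(C_J^{-1}-\|\boldsymbol{\beta}\|_{\infty,\O}/\nu\bigr)$ on the left exactly as in the primal estimate.

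I would then bound the remaining right-hand side by the continuity of $a(\cdot,\cdot)$ and $b(\cdot,\cdot)$ and Young's inequality, collecting constants into $C_{\nu,\boldsymbol{\beta}}^*=\max\{\nu^{-1},\,\nu^{-1}\|\boldsymbol{\beta}\|_{\infty,\O}+1\}$; combined with the triangle inequality $\vertiii{(\widehat{\bsig}^*-\widehat{\bsig}_h^*,\widehat{\bu}^*-\widehat{\bu}_h^*)}\leq\vertiii{(\widehat{\bsig}^*-\widehat{\bsig}_I^*,\widehat{\bu}^*-\widehat{\bu}_I^*)}+\vertiii{(\widehat{\bsig}_I^*-\widehat{\bsig}_h^*,\widehat{\bu}_I^*-\widehat{\bu}_h^*)}$, this gives a quasi-optimality estimate $\vertiii{(\widehat{\bsig}^*-\widehat{\bsig}_h^*,\widehat{\bu}^*-\widehat{\bu}_h^*)}\leq\widehat{C}_{\nu,\beta}^*\vertiii{(\widehat{\bsig}^*-\widehat{\bsig}_I^*,\widehat{\bu}^*-\widehat{\bu}_I^*)}$ with $\widehat{C}_{\nu,\beta}^*$ of the same algebraic form as $\widehat{C}_{\nu,\boldsymbol{\beta}}$. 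Finally, the approximation properties of $\mathbb{RT}_k$/$\mathbb{BDM}_{k+1}$ and $\bQ_h$ from \cite[Section~3.2]{MR4430561} convert the best-approximation error into $h^{s^*}\bigl(\|\widehat{\bsig}^*\|_{s^*,\O}+\|\widehat{\bu}^*\|_{1+s^*,\O}\bigr)$, and the regularity assumption \eqref{eq:reg_u_if*} supplies the final bound by $\|\boldsymbol{f}\|_{0,\O}$, since $(\bT^*-\bT_h^*)\boldsymbol{f}=\widehat{\bu}^*-\widehat{\bu}_h^*$ by definition of the operators.

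The only step requiring genuine care—though it is a mild one—is the bookkeeping of the convective term in its adjoint placement $c(\btau_h,\cdot)$: one must check that the identical continuity bound and the identical smallness threshold $C_J\|\boldsymbol{\beta}\|_{\infty,\O}/\nu<1$ used in Lemmas~\ref{lmm:gonzalo1} and~\ref{lmm:conv1} still permit absorbing the discrete convective contribution on the left, which they do precisely because that bound is insensitive to which argument carries the velocity and which carries the tensor. Everything else is a verbatim repetition with starred unknowns and the dual regularity exponent $s^*$, which is why the full details may be omitted.
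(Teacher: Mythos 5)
Your proposal is correct and matches the paper's intent exactly: the paper itself omits this proof, stating only that it is ``essentially identical to Lemma~\ref{lmm:conv1},'' and your transcription is precisely the instantiation the authors have in mind, including the two points that genuinely need checking (the symmetry of $B(\cdot;\cdot)$ so that Lemma~\ref{inf-sup_B} applies with the arguments swapped, and the fact that the continuity bound for $c(\cdot,\cdot)$ and the smallness threshold $C_J\|\boldsymbol{\beta}\|_{\infty,\O}/\nu<1$ are insensitive to the adjoint placement of the convective term). No gaps.
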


With all these results at hand, we are in position to apply the theory of  \cite[Chapter IV]{MR0203473} and \cite[Theorem 9.1]{MR2652780} to conclude that  our numerical methods does not introduce spurious eigenvalues.
\begin{theorem}
	\label{thm:spurious_free}
	Let $V\subset\mathbb{C}$ be an open set containing $\sp(\bT)$. Then, there exists $h_0>0$ such that $\sp(\bT_h)\subset V$ for all $h<h_0$.
\end{theorem}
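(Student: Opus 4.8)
The plan is to establish the spurious-free property as a direct consequence of the norm convergence $\bT_h\to\bT$ proved in Lemma~\ref{lmm:conv1}, combined with the compactness of $\bT$ noted in Remark~\ref{remark-del-uhat-sigmahat}. The key observation is that convergence in operator norm of a sequence of operators to a compact operator implies convergence of the spectra in the sense required by the classical theory; more precisely, no eigenvalue of $\bT_h$ can lie far from $\sp(\bT)$ for $h$ small.

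First, I would recall that Lemma~\ref{lmm:conv1} gives $\|\bT-\bT_h\|\leq \widehat{C}_{\nu,\beta}h^s \to 0$ as $h\to 0$, where the operator norm is that of bounded operators on $\bQ$ (the velocity estimate $\|(\bT-\bT_h)\boldsymbol{f}\|_{0,\O}\leq \widehat{C}_{\nu,\beta}h^s\|\boldsymbol{f}\|_{0,\O}$ is embedded in the triple-norm bound). Since $\bT$ is compact, the framework of \cite[Chapter IV]{MR0203473} applies directly: I would invoke the abstract result that if $\bT_h$ converges to $\bT$ in norm with $\bT$ compact, then the resolvent sets behave continuously, and in particular for any open $V\supset\sp(\bT)$ the spectra $\sp(\bT_h)$ are eventually contained in $V$.

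Concretely, I would argue by contradiction on the complement. Let $\Gamma:=\partial V$, a compact subset of the resolvent set $\rho(\bT)$, so that $\sup_{z\in\Gamma}\|(z-\bT)^{-1}\|=:M<\infty$. For $h$ small enough that $\widehat{C}_{\nu,\beta}h^s < 1/M$, a standard Neumann-series perturbation argument shows $z-\bT_h$ is invertible for every $z\in\Gamma$, hence $\Gamma\subset\rho(\bT_h)$. Because $\sp(\bT_h)$ consists of finitely many eigenvalues outside any neighborhood of $0$ (the discrete operator has finite rank or is compact) and $0\in V$, the portion of $\sp(\bT_h)$ separated from $V$ would have to cross the barrier $\Gamma$, which is impossible once $\Gamma\subset\rho(\bT_h)$. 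Taking $h_0$ to be the threshold $\widehat{C}_{\nu,\beta}h_0^s = 1/M$ yields the claim.

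The main obstacle, and the only point requiring genuine care, is verifying that the property $P(h)$ convergence of \cite{MR0203473} (equivalently the gap-convergence hypotheses of \cite[Theorem 9.1]{MR2652780}) is implied by the norm convergence in Lemma~\ref{lmm:conv1}; this reduces to confirming that our estimate is an honest bound on $\|\bT-\bT_h\|_{\mathcal{L}(\bQ,\bQ)}$ and not merely a pointwise-in-$\boldsymbol{f}$ estimate with an $\boldsymbol{f}$-dependent constant. Lemma~\ref{lmm:conv1} supplies exactly this uniform bound through the regularity assumption~\eqref{eq:reg_u_if}, so the hypotheses of the abstract spectral approximation theory are met and the conclusion follows.
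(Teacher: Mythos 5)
Your proposal follows exactly the route the paper takes: the paper offers no written proof of Theorem~\ref{thm:spurious_free}, deferring entirely to \cite[Chapter IV]{MR0203473} and \cite[Theorem 9.1]{MR2652780} on the strength of the norm convergence in Lemma~\ref{lmm:conv1} and the compactness of $\bT$, and what you have written is the standard resolvent-perturbation argument that those references encapsulate. Your reading of Lemma~\ref{lmm:conv1} as a genuine bound on $\|\bT-\bT_h\|_{\mathcal{L}(\bQ,\bQ)}$ (via the uniform constant from the regularity assumption~\eqref{eq:reg_u_if}) is precisely the point the paper relies on.

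One step should be tightened. Bounding the resolvent only on $\Gamma=\partial V$ and then claiming that eigenvalues of $\bT_h$ ``would have to cross the barrier'' is not an argument for a fixed $h$: nothing is moving, so there is no crossing to forbid. The clean fix is to take the supremum over all of $\mathbb{C}\setminus V$ rather than over $\partial V$: since $\mathbb{C}\setminus V$ is a closed subset of $\rho(\bT)$ and $\|(z\boldsymbol{I}-\bT)^{-1}\|\le (|z|-\|\bT\|)^{-1}$ for $|z|>\|\bT\|$, the map $z\mapsto\|(z\boldsymbol{I}-\bT)^{-1}\|$ is bounded on $\mathbb{C}\setminus V$ by some $M<\infty$. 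Then for every $z\notin V$ and every $h$ with $\widehat{C}_{\nu,\beta}h^{s}<1/M$, the factorization $z\boldsymbol{I}-\bT_h=(z\boldsymbol{I}-\bT)\bigl(\boldsymbol{I}-(z\boldsymbol{I}-\bT)^{-1}(\bT-\bT_h)\bigr)$ and the Neumann series give invertibility, so $\mathbb{C}\setminus V\subset\rho(\bT_h)$, i.e.\ $\sp(\bT_h)\subset V$ directly, with no connectivity argument needed. With that replacement your proof is complete and coincides with the paper's intended one.
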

\subsection{Error estimates}
\label{sec:conv}
Now our task is to prove a priori error estimates for the eigenfunctions and eigenvalues. With this goal in mind, let us recall the definition of spectral projectors. Let $\mu$ be a nonzero isolated eigenvalue of $\bT$ with algebraic multiplicity $m$ and let $\Gamma$
be a disk of the complex plane centered in $\mu$, such that $\mu$ is the only eigenvalue of $\bT$ lying in $\Gamma$ and $\partial\Gamma\cap\sp(\bT)=\emptyset$. The spectral projections of $\boldsymbol{E}$ and $\boldsymbol{E}^*$, associated to $\bT$ and $\bT^*$ are defined, respectively, in the following way
\begin{enumerate}
\item[a)] The spectral projector of $\bT$ associated to $\mu$ is $\displaystyle \boldsymbol{E}:=\frac{1}{2\pi i}\int_{\partial\Gamma} (z\boldsymbol{I}-\bT)^{-1}\,dz;$
\item[b)] The spectral projector of $\bT^*$ associated to $\bar{\mu}$ is $\displaystyle \boldsymbol{E}^*:=\frac{1}{2\pi i}\int_{\partial\Gamma} (z\boldsymbol{I}-\bT^*)^{-1}\,dz,$
\end{enumerate}
where $\boldsymbol{I}$ represents the identity operator and  $\boldsymbol{E}$ and $\boldsymbol{E}^*$ are the projections onto the generalized eigenspaces $R(\boldsymbol{E})$ and $R(\boldsymbol{E}^*)$, respectively. 

The convergence in norm stated in Lemma \ref{lmm:conv1} gives as consequence  the  existence of  $m$ eigenvalues lying  in $\Gamma$, which we denote by  $\mu_h^{(1)},\ldots,\mu_h^{(m)}$, repeated according their respective multiplicities, that converge to $\mu$ as $h$ goes to zero. This motivates the  definition of the following discrete  spectral projection
$$
\boldsymbol{E}_h:=\frac{1}{2\pi i}\int_{\partial\Gamma} (z\boldsymbol{I}-\bT_h)^{-1}\,dz.
$$
This operator is precisely a projection onto the discrete invariant subspace $R(\boldsymbol{E}_h)$ of $\bT$, spanned by the generalized eigenvector of $\bT_h$ corresponding to 
 $\mu_h^{(1)},\ldots,\mu_h^{(m)}$.
Another necessary ingredient  for the error analysis is the \textit{gap} $\hdel(\cdot,\cdot)$ between two closed
subspaces $\mathfrak{X}$ and $\mathfrak{Y}$ of $ \boldsymbol{\L}^2(\O)$, which is defined by 
$$
\hdel(\mathfrak{X},\mathfrak{Y})
:=\max\big\{\delta(\mathfrak{X},\mathfrak{Y}),\delta(\mathfrak{Y},\mathfrak{X})\big\}, \text{ where } \delta(\mathfrak{X},\mathfrak{Y})
:=\sup_{\underset{\left\|\boldsymbol{x}\right\|_{0,\O}=1}{\boldsymbol{x}\in\mathfrak{X}}}
\left(\inf_{\boldsymbol{y}\in\mathfrak{Y}}\left\|\boldsymbol{x}-\boldsymbol{y}\right\|_{0,\O}\right).
$$
Now we present the main result of this section.
\begin{theorem}
\label{thm:errors1}
For $k\geq 0$ and  assume that $\widehat{C}_{\nu,\boldsymbol{\beta}}$, $\widehat{C}_{\nu,\boldsymbol{\beta}}^*$, and $\widetilde{C}_{\nu,\boldsymbol{\beta}}$ are sufficiently small as in Lemma \ref{lmm:conv1}  and Lemma \ref{eq:adjoint_diff}. the following estimates hold
$$
\hdel(R(\boldsymbol{E}),R(\boldsymbol{E}_h))\leq \widehat{C}_{\nu,\boldsymbol{\beta}} h^{\min\{k+1,s\}}\quad\text{and}\quad
|\mu-\mu_h|\leq  \widetilde{C}_{\nu,\boldsymbol{\beta}}h^{\min\{2(k+1),s+s^*\}},
$$
where  $$\widetilde{C}_{\nu,\boldsymbol{\beta}}:=\widehat{C}_{\nu,\boldsymbol{\beta}}\widehat{C}_{\nu,\boldsymbol{\beta}}^*\left(\dfrac{1+\|\boldsymbol{\beta}\|_{\infty,\O}}{\nu}+3\right).$$

\end{theorem}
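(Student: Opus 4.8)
The plan is to apply the abstract spectral approximation theory for non-selfadjoint compact operators of \cite{MR0203473, MR2652780}, whose central hypothesis --- convergence in norm of the discrete solution operators to their continuous counterparts --- is exactly what Lemma \ref{lmm:conv1} and Lemma \ref{eq:adjoint_diff} supply. Since $\mu$ is an isolated eigenvalue of finite algebraic multiplicity $m$ and $\bT_h\to\bT$ in norm, the projector $\boldsymbol{E}_h$ is well defined for $h$ small and converges to $\boldsymbol{E}$; the adjoint pair $\boldsymbol{E}_h^*\to\boldsymbol{E}^*$ behaves likewise by Lemma \ref{eq:adjoint_diff}. I would then treat the two estimates separately.

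For the gap estimate I would use the standard bound, valid when the operators converge in norm, that controls the distance between the invariant subspaces by the action of the operator difference on the continuous eigenspace, $\hdel(R(\boldsymbol{E}),R(\boldsymbol{E}_h))\leq C\,\|(\bT-\bT_h)|_{R(\boldsymbol{E})}\|$. Every element of $R(\boldsymbol{E})$ is a (generalized) eigenfunction of $\bT$ and therefore inherits the regularity recorded in \eqref{eq:reg_u_if}, namely $\widehat{\bu}\in\boldsymbol{\H}^{1+s}(\O)$ and $\widehat{\bsig}\in\mathbb{H}^{s}(\O)$. Invoking Lemma \ref{lmm:conv1} on this finite-dimensional subspace, the approximation order of the discrete spaces (degree $k$, hence order $k+1$) together with the available regularity $s$ yields the rate $h^{\min\{k+1,s\}}$ with constant $\widehat{C}_{\nu,\boldsymbol{\beta}}$, which is the first asserted bound.

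For the eigenvalue estimate I would invoke Osborn's identity for non-selfadjoint problems. Choosing a basis $\{\boldsymbol{\phi}_i\}_{i=1}^m$ of $R(\boldsymbol{E})$ and its dual basis $\{\boldsymbol{\phi}_i^*\}_{i=1}^m$ of $R(\boldsymbol{E}^*)$ normalized by $(\boldsymbol{\phi}_i,\boldsymbol{\phi}_j^*)_{0,\O}=\delta_{ij}$, the arithmetic mean $\widehat{\mu}_h$ of the discrete eigenvalues clustering at $\mu$ (which coincides with $\mu_h$ when $\mu$ is simple) satisfies
$$
|\mu-\widehat{\mu}_h|\leq \frac{1}{m}\sum_{i=1}^m\big|((\bT-\bT_h)\boldsymbol{\phi}_i,\boldsymbol{\phi}_i^*)_{0,\O}\big|
+C\,\big\|(\bT-\bT_h)|_{R(\boldsymbol{E})}\big\|\;\big\|(\bT^*-\bT_h^*)|_{R(\boldsymbol{E}^*)}\big\|.
$$
The second summand is already of the desired order by Lemmas \ref{lmm:conv1} and \ref{eq:adjoint_diff}. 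The heart of the matter is the first summand: rewriting $((\bT-\bT_h)\boldsymbol{\phi}_i,\boldsymbol{\phi}_i^*)_{0,\O}$ through the source and adjoint-source variational problems \eqref{def:oseen_system_source} and \eqref{def:oseen_system_weak_dual_source11}, one expresses it as a single bounded bilinear expression in the forms $a(\cdot,\cdot)$, $b(\cdot,\cdot)$, $c(\cdot,\cdot)$ evaluated on the primal error $\bT\boldsymbol{\phi}_i-\bT_h\boldsymbol{\phi}_i$ and the dual error $\bT^*\boldsymbol{\phi}_i^*-\bT_h^*\boldsymbol{\phi}_i^*$. Applying Galerkin orthogonality once for each discretization collapses it into the product
$$
\big|((\bT-\bT_h)\boldsymbol{\phi}_i,\boldsymbol{\phi}_i^*)_{0,\O}\big|
\leq \Big(\frac{1+\|\boldsymbol{\beta}\|_{\infty,\O}}{\nu}+3\Big)\,
\vertiii{(\widehat{\bsig}-\widehat{\bsig}_h,(\bT-\bT_h)\boldsymbol{\phi}_i)}\,
\vertiii{(\widehat{\bsig}^*-\widehat{\bsig}_h^*,(\bT^*-\bT_h^*)\boldsymbol{\phi}_i^*)},
$$
where the constant records the continuity of $a$ ($1/\nu$), of $c$ ($\|\boldsymbol{\beta}\|_{\infty,\O}/\nu$), and of the three occurrences of $b$. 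Inserting the bounds of Lemmas \ref{lmm:conv1} and \ref{eq:adjoint_diff} produces the rate $h^s\,h^{s^*}$, capped by the polynomial order to $h^{\min\{2(k+1),s+s^*\}}$, and the three factors assemble precisely into $\widetilde{C}_{\nu,\boldsymbol{\beta}}=\widehat{C}_{\nu,\boldsymbol{\beta}}\widehat{C}_{\nu,\boldsymbol{\beta}}^*\big(\tfrac{1+\|\boldsymbol{\beta}\|_{\infty,\O}}{\nu}+3\big)$.

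The main obstacle is exactly this duality (product) estimate for $((\bT-\bT_h)\boldsymbol{\phi}_i,\boldsymbol{\phi}_i^*)_{0,\O}$. Because the convective form is non-symmetric --- it enters the primal problem as $c(\bu,\btau)$ and the adjoint problem as $-c(\btau,\bu^*)$ --- the primal and dual variational problems are genuinely different, and one must verify that the cross terms generated when pairing the primal error against the dual solution are all of product type (primal error times dual error) rather than leaving a stray single-error contribution, which would spoil the doubling of the convergence rate. Securing this requires using the consistency of both discrete schemes simultaneously and absorbing the convective contribution into the product structure via the continuity bound $|c(\bv,\btau)|\leq \nu^{-1}\|\boldsymbol{\beta}\|_{\infty,\O}\|\bv\|_{0,\O}\|\btau\|_{\bdiv,\O}$; once this is in place, the remaining steps are the routine insertions of the already-established approximation rates.
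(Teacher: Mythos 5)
Your proposal is correct and follows essentially the same route as the paper: the gap estimate from the norm convergence of $\bT_h$ to $\bT$ combined with the eigenfunction regularity, and the double order for the eigenvalues from Osborn's identity, where the term $\langle(\bT-\bT_h)\bu_k,\bu_k^*\rangle$ is reduced via Galerkin orthogonality to the combined form $A$ evaluated on the primal and dual errors and then bounded by the continuity of $a$, $b$ and $c$, yielding exactly the constant $\bigl(\tfrac{1+\|\boldsymbol{\beta}\|_{\infty,\O}}{\nu}+3\bigr)$ and the rates of Lemmas \ref{lmm:conv1} and \ref{eq:adjoint_diff}.
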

\begin{proof}
The proof of the gap between the eigenspaces is a direct consequence of the convergence in norm between $\bT$ and $\bT_h$ as $h$ goes to zero.
We focus on the double order of convergence for the eigenvalues. Let $\{\bu_k\}_{k=1}^m$ be such that $\bT \bu_k=\mu \bu_k$, for $k=1,\ldots,m$. A dual basis for $R(\boldsymbol{E}^*)$ is $\{\bu_k^*\}_{k=1}^m$. This basis satisfies $A((\bsig,\bu_k);(\bsig^*,\bu_l^*))=\delta_{k.l},$
where $\delta_{k.l}$ represents the Kronecker delta and $A((\bsig,\bu_k);(\bsig^*,\bu_l^*))=a(\bsig,\bsig^*)+b(\bsig^*,\bu_k)+c(\bu_k,\bsig^*)+b(\bsig,\bu_l^*))$.
On the other hand, the following identity holds
$$
|\mu-\widehat{\mu}_h|\leq  \frac{1}{m}\sum_{k=1}^m|\langle(\bT-\bT_h)\bu_k,\bu_k^* \rangle|+\|(\bT-\bT_h)|_{R(\boldsymbol{E})} \|_{1,\Omega} \|(\bT^*-\bT_h^*)|_{R(\boldsymbol{E}^*)}\|_{1,\Omega},
$$
where $\langle\cdot,\cdot\rangle$ denotes the corresponding duality pairing. For the first term on the right-hand side we note that
\begin{multline*}
\langle(\bT-\bT_h)\bu_k,\bu_k^* \rangle=A((\bsig,(\bT-\bT_h)\bu_k);(\bsig^*,\bu_k^*))\\=
A((\bsig-\bsig_h,(\bT-\bT_h)\bu_k);(\bsig^*,\bu_k^*))+A((\bsig_h,(\bT-\bT_h)\bu_k);(\bsig^*,\bu_k^*))\\
=A((\bsig-\bsig_h,(\bT-\bT_h)\bu_k);(\bsig^*-\bsig_h^*,\bu_k^*-\bu_{k,h}^*))\\
+A((\bsig-\bsig_h,(\bT-\bT_h)\bu_k);(\bsig_h^*,\bu_{k,h}^*)))+A((\bsig_h,(\bT-\bT_h)\bu_k);(\bsig^*,\bu_k^*))\\
=A((\bsig-\bsig_h,(\bT-\bT_h)\bu_k);(\bsig^*-\bsig_h^*,\bu_k^*-\bu_{k,h}^*)),
\end{multline*}
where for the last equality, we have used the Galerkin orthogonality. Then
\begin{multline*}
\langle(\bT-\bT_h)\bu_k,\bu_k^* \rangle=A((\bsig-\bsig_h,(\bT-\bT_h)\bu_k);(\bsig^*-\bsig_h^*,\bu_k^*-\bu_{k,h}^*))\\
=a(\bsig-\bsig_h,\bsig^*-\bsig_h^*)+b(\bsig^*-\bsig_h^*,(\bT-\bT_h)\bu_k)+b(\bsig-\bsig_h,\bu_k^*-\bu_{k,h}^*)\\
+c((\bT-\bT_h)\bu_k,\bsig^*-\bsig_h^*)\\
\leq \dfrac{1}{\nu}\|\bsig-\bsig_h\|_{\bdiv,\O}\|\bsig^*-\bsig_h^*\|_{\bdiv,\O}+\|\bsig^*-\bsig_h^*\|_{\bdiv,\O}\|(\bT-\bT_h)\bu_k\|_{0,\O}\\
+\|\bsig-\bsig_h\|_{\bdiv,\O}\|\bu_k^*-\bu_{k,h}^*\|_{0,\O}+\dfrac{1}{\nu}\|(\bT-\bT_h)\bu_k\|_{0,\O}\|\bsig^*-\bsig_h^*\|_{0,\O}\|\boldsymbol{\beta}\|_{\infty,\O}.
\end{multline*}
Then, Theorem \ref{thm:errors1} follows from the above estimates and the approximation properties of discrete spaces, in addition to Lemmas \ref{lmm:conv1} and \ref{eq:adjoint_diff}.
\end{proof}

\section{Numerical experiments}
\label{sec:numerics}
This section is devoted to perform several numerical experiments to analyze the behavior of the scheme in different geometries and physical configurations. We resort to the DOLFINx software \cite{barrata2023dolfinx,scroggs2022basix} for the implementations of the codes, where the SLEPc eigensolver \cite{hernandez2005slepc} and the linear solver MUMPS are used to solve the resulting general eigenvalue problem. The meshes are constructed with the help of GMSH \cite{geuzaine2009gmsh} and the DOLFINx built-in generic meshes.

The rates of convergence for each eigenvalue  are computed by using a least-square fitting. More precisely, if $\lambda_h$ is a discrete complex eigenvalue, then the rate of convergence $\alpha$ is calculated by extrapolation and the least square  fitting
$$
\lambda_{h}\approx \lambda_{\text{extr}} + Ch^{\alpha},
$$
where $\lambda_{\text{extr}}$ is the extrapolated eigenvalue given by the fitting. 

In what follows, we denote the mesh resolution by $N$, which is connected to the mesh-size $h$ through the relation $h\sim N^{-1}$. We also denote the number of degrees of freedom by $\texttt{dof}$. The relation between $\texttt{dof}$ and the mesh size is given by $h\sim\texttt{dof}^{-1/n}$, with $n\in\{2,3\}$. 

According to the assumptions on $\boldsymbol{\beta}$, we will assume $\nu=1/2$ in all the experiments, unless stated otherwise. Also, for each convective velocity, we assume that it is normalized and, for the sake of uniformity, is denoted by $\boldsymbol{\beta}=\boldsymbol{\beta}/\Vert\boldsymbol{\beta}\Vert_{\infty,\Omega}$.

\subsection{Results in 2D geometries}\label{subsec:numeric-2D}
In this section we study the behavior of the scheme when choosing different geometries and convective velocities. The idea is to observe the convergence and the complex spectrum when we have different configurations of the model. To do this task, the values for the solenoidal convective velocity are considered among the following
$$
\begin{aligned}
	&\boldsymbol{\beta}_1(x,y)=(1,0)^{\texttt{t}},\quad  \boldsymbol{\beta}_2(x,y)=(\cos(\pi x)\sin(\pi y),-\sin(\pi x)\cos(\pi y))^{\texttt{t}},\\
	&\boldsymbol{\beta}_3(x,y)=(y,-x)^{\texttt{t}},\quad  \boldsymbol{\beta}_4(x,y)=\left(\frac{\partial\phi}{\partial y},-\frac{\partial\phi}{\partial x}\right)^{\texttt{t}}, \\
\end{aligned}
$$
with $\phi(x,y)=1000(1-x^2)^2(1-y^2)^2$.
The geometries under study are the square and lshaped domain. Each case is described below.

\subsubsection{Square domain}\label{subsec:square-domain2D}
Let us start the experiments by considering the square domain $\Omega:=(0,1)^2$, which serves as a benchmark to compare the computed eigenvalues with existing references. 

Table \ref{table-square2D} describe the convergence behavior of the scheme with $\mathbb{RT}_k$ and $\mathbb{BDM}_{k+1}$ with $\boldsymbol{\beta}(x,y)=\boldsymbol{\beta}_1(x,y)$. Both cases shows an optimal rate of convergence, with errors behaving like $\mathcal{O}(h^{2(k+1)})$. However, when comparing the rates between both finite element families, we note that the $\mathbb{BDM}$ family yields to a more stable scheme overall. Moreover, when the convective velocity results to be variable, the order of convergence with the lowest order Brezzi-Douglas-Marini family is also attained on the square, as is shown in Table  \ref{table-square2D-BDM_variablebeta}.  Also, for these values of  $\boldsymbol{\beta}$, complex eigenvalues are observed when the spectrum is computed. This is clearly expected due the nature of the Oseen eigenvalue problem. 
		
	Finally, the first fifty computed eigenvalues for the given choices of convective velocities are portrayed in Figure \ref{fig:square-domain-complejos}. For this test, a mesh level $N=100$ was selected. Here, we observe that the amount of complex eigenvalues for the simplest case $\boldsymbol{\beta}_2$ is considerable lower than the rest. From simple inspection on the graphs, we note that the complex eigenvalues appearing for $\boldsymbol{\beta}_i$, $i=2,3,4$, are almost equal, except for their magnitude. 

\begin{table}[hbt!]
	\centering 
	{\footnotesize
		\begin{center}
			\caption{Example \ref{subsec:square-domain2D}. Convergence behavior of the first four lowest computed eigenvalues on the square domain with $\mathbb{RT}_k$ and $\mathbb{BDM}_{k+1}$ elements. We consider the field $\boldsymbol{\beta}=(1,0)^{\texttt{t}}$. }
			\begin{tabular}{|c c c c |c| c|c|}
				\hline
				\hline
				$N=20$             &  $N=30$         &   $N=40$         & $N=50$ & Order & $\lambda_{\text{extr}}$ & Ref. \cite{LEPE2024116959} \\ 
				\hline
				\multicolumn{7}{|c|}{$\mathbb{RT}_0$}\\
				\hline
				13.6044  &    13.6066  &    13.6077  &    13.6082  & 1.92 &    13.6096 &    13.6096  \\
				23.0754  &    23.0992  &    23.1102  &    23.1162  & 2.02 &    23.1296 &    23.1298  \\
				23.3761  &    23.3966  &    23.4061  &    23.4113  & 2.02 &    23.4229 &    23.4230  \\
				32.2352  &    32.2626  &    32.2754  &    32.2823  & 2.00 &    32.2981 &    32.2981  \\
				
				\hline
				\multicolumn{7}{|c|}{$\mathbb{RT}_1$}\\
				\hline	
				
				13.6095  &    13.6096  &    13.6096  &    13.6096  & 3.89 &    13.6096 &    13.6096  \\
				23.1299  &    23.1298  &    23.1298  &    23.1298  & 4.02 &    23.1297 &    23.1298  \\
				23.4231  &    23.4230  &    23.4230  &    23.4230  & 4.72 &    23.4230 &    23.4230  \\
				32.2983  &    32.2981  &    32.2981  &    32.2981  & 4.13 &    32.2981 &    32.2981  \\

				\hline
				\multicolumn{7}{|c|}{$\mathbb{RT}_2$}\\
				\hline
				
				13.6096  &    13.6096  &    13.6096  &    13.6096  & 5.83 &    13.6096 &    13.6096  \\
				23.1297  &    23.1297  &    23.1297  &    23.1297  & 5.86 &    23.1297 &    23.1298  \\
				23.4230  &    23.4230  &    23.4230  &    23.4230  & 5.78 &    23.4230 &    23.4230  \\
				32.2981  &    32.2981  &    32.2981  &    32.2981  & 5.76 &    32.2981 &    32.2981  \\
				
				\hline
				\multicolumn{7}{|c|}{$\mathbb{BDM}_1$}\\
				\hline
				13.6929  &    13.6467  &    13.6305  &    13.6229  & 2.02 &    13.6097 &    13.6096  \\
				23.4112  &    23.2551  &    23.2003  &    23.1749  & 2.02 &    23.1302 &    23.1298  \\
				23.7040  &    23.5481  &    23.4934  &    23.4681  & 2.02 &    23.4233 &    23.4230  \\
				32.8460  &    32.5430  &    32.4361  &    32.3865  & 2.01 &    32.2985 &    32.2981  \\
				
				\hline
				\multicolumn{7}{|c|}{$\mathbb{BDM}_2$}\\
				\hline	
				
				13.6098  &    13.6096  &    13.6096  &    13.6096  & 4.00 &    13.6096 &    13.6096  \\
				23.1309  &    23.1300  &    23.1298  &    23.1298  & 4.00 &    23.1297 &    23.1298  \\
				23.4242  &    23.4232  &    23.4231  &    23.4230  & 4.00 &    23.4230 &    23.4230  \\
				32.3017  &    32.2989  &    32.2984  &    32.2982  & 3.99 &    32.2981 &    32.2981  \\

				\hline
				\multicolumn{7}{|c|}{$\mathbb{BDM}_3$}\\
				\hline
				
				13.6096  &    13.6096  &    13.6096  &    13.6096  & 6.17 &    13.6096 &    13.6096  \\
				23.1298  &    23.1297  &    23.1297  &    23.1297  & 5.98 &    23.1297 &    23.1298  \\
				23.4230  &    23.4230  &    23.4230  &    23.4230  & 5.98 &    23.4230 &    23.4230  \\
				32.2981  &    32.2981  &    32.2981  &    32.2981  & 5.94 &    32.2981 &    32.2981  \\
				
				\hline
				
				\hline             
			\end{tabular}
	\end{center}}
	\smallskip
	
	\label{table-square2D}
\end{table}

\begin{figure}[!hbt]
	\centering
	\begin{minipage}{0.24\linewidth}\centering
		{$\bu_{h,1}$}\\
		\includegraphics[scale=0.09,trim=23cm 5cm 23cm 5cm,clip]{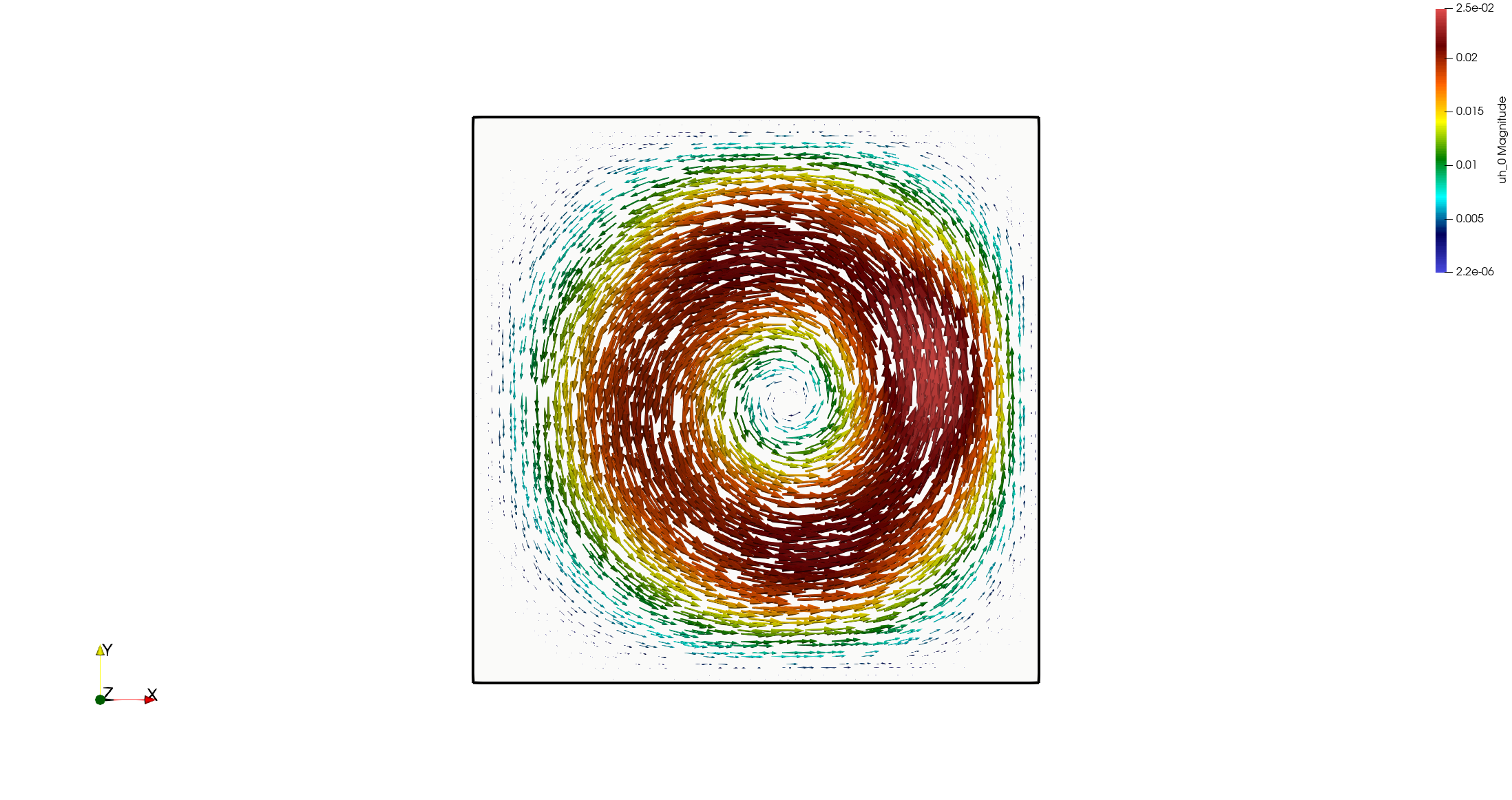}
		\end{minipage}
		\begin{minipage}{0.24\linewidth}\centering
			{$\bu_{h,4}$}\\
			\includegraphics[scale=0.09,trim=23cm 5cm 23cm 5cm,clip]{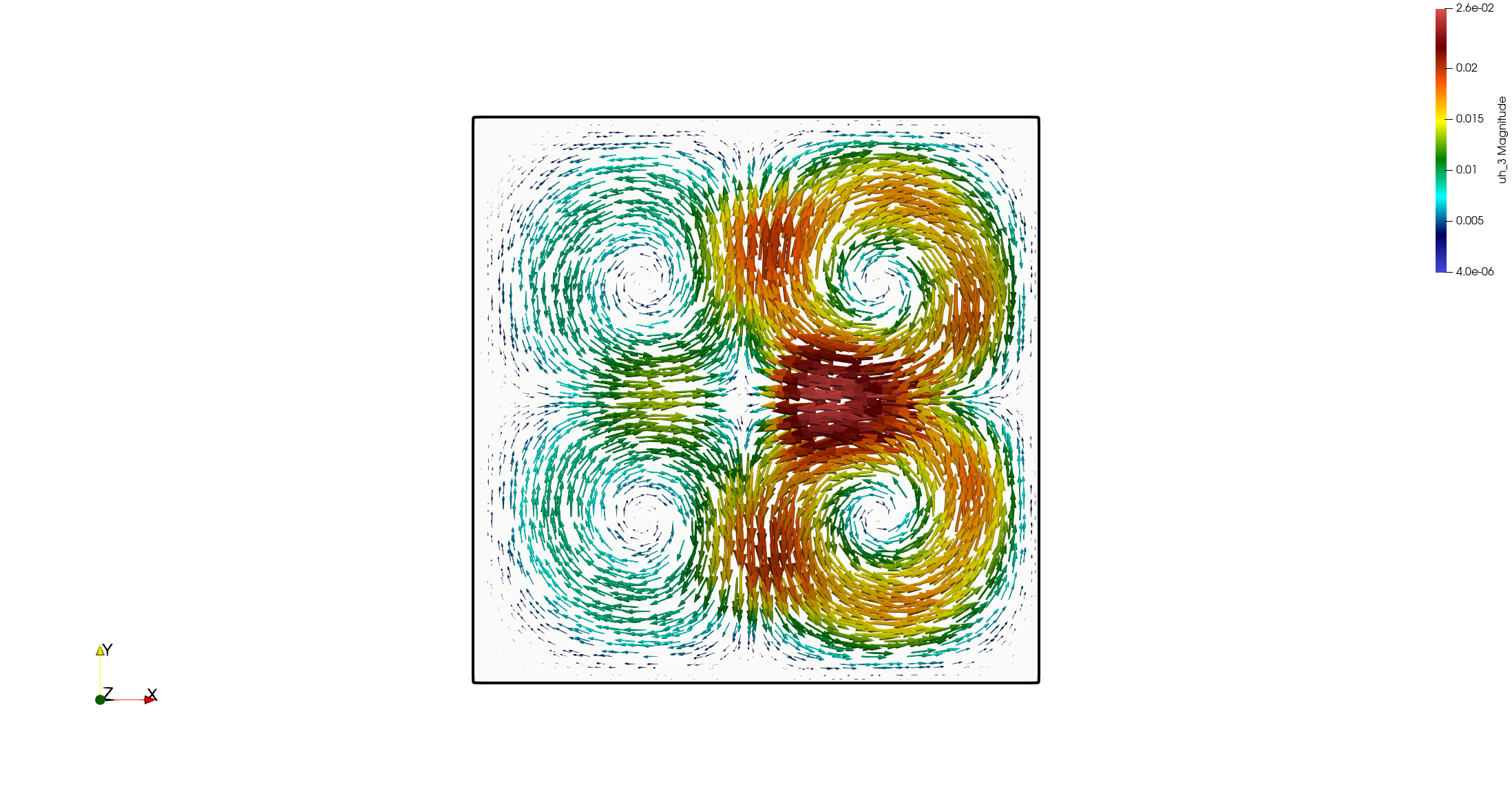}
		\end{minipage}
		\begin{minipage}{0.24\linewidth}\centering
			{$p_{h,1}$}\\
			\includegraphics[scale=0.09,trim=23cm 5cm 23cm 5cm,clip]{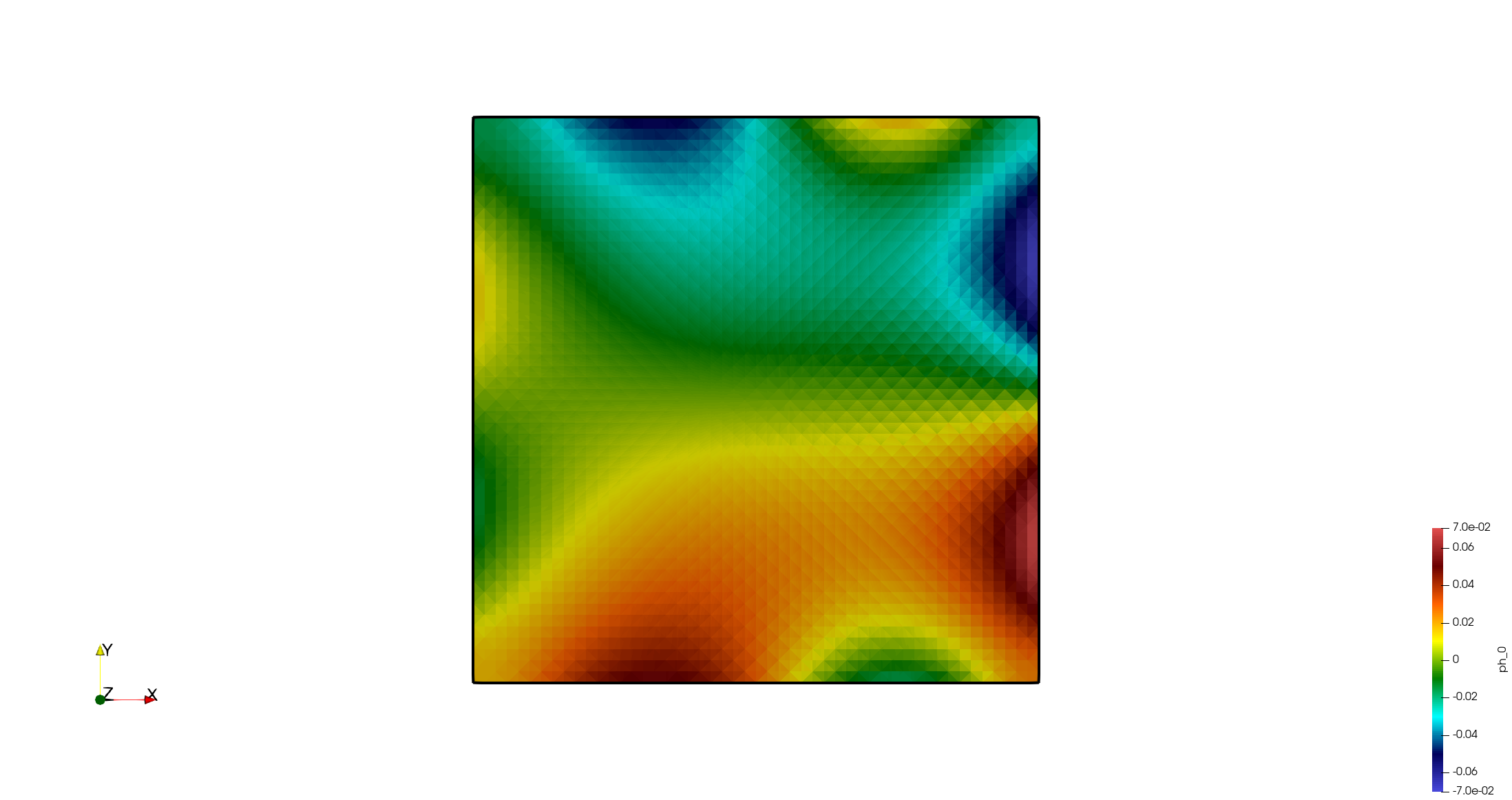}
		\end{minipage}
		\begin{minipage}{0.24\linewidth}\centering
			{$p_{h,4}$}\\
			\includegraphics[scale=0.09,trim=23cm 5cm 23cm 5cm,clip]{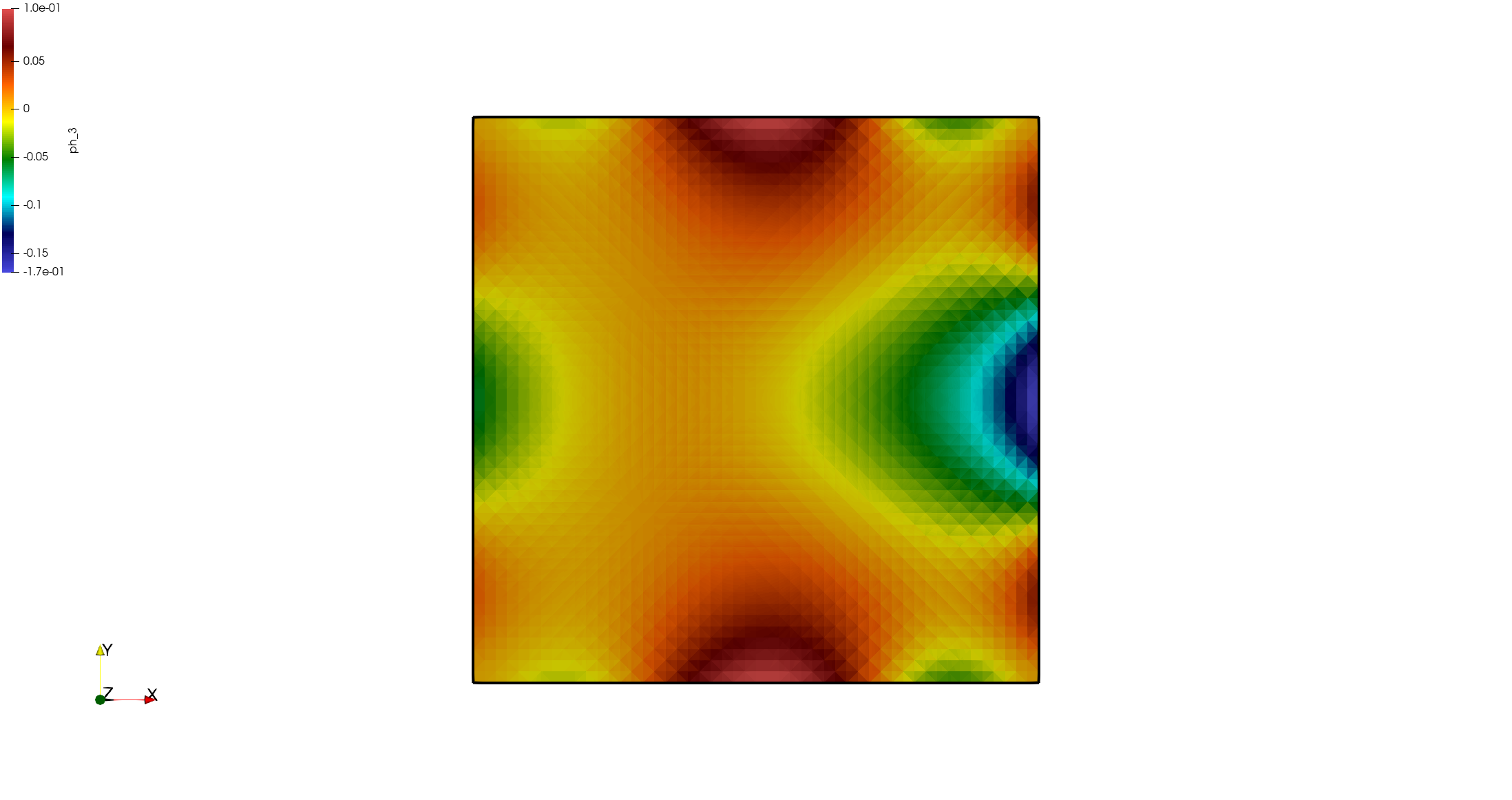}
		\end{minipage}\\
		\caption{Test \ref{subsec:square-domain2D}. Velocity streamlines and pressures surface plot for the first and fourth computed eigenvalue with $\boldsymbol{\beta}_1$.}
		\label{fig:square-2D-uhs}
\end{figure}

\begin{table}[hbt!]
	\centering 
	{\footnotesize
		\begin{center}
			\caption{Example \ref{subsec:square-domain2D}. Convergence behavior of the first four lowest computed eigenvalues on the square domain with $\mathbb{BDM}_1$ elements for the velocity field and different choices of $\boldsymbol{\beta}$.  }
			\begin{tabular}{|c| c c c c|}
				\hline
				\hline
				\texttt{dof}   &    $\lambda_{h,1}$  &   $\lambda_{h,2}$   &   $\lambda_{h,3}$  &   $\lambda_{h,4}$  \\
				\hline
				\multicolumn{5}{|c|}{$\boldsymbol{\beta}_2(x,y)$}\\
				\hline
				  6560 & $   13.1926  $   & $   23.3609  +0.8022i$  & $   23.3609  -0.8022i$  & $   33.2146  $ \\
				 14640 & $   13.1400  $   & $   23.1980  +0.7837i$  & $   23.1980  -0.7837i$  & $   32.8884  $ \\
				 25920 & $   13.1218  $   & $   23.1414  +0.7838i$  & $   23.1414  -0.7838i$  & $   32.7838  $ \\
				 40400 & $   13.1132  $   & $   23.1151  +0.7826i$  & $   23.1151  -0.7826i$  & $   32.7334  $ \\
				Order &                 2.04    &                 2.05    &                 2.05    &                 2.19 \\
				Extrap &   $   13.0985  $   & $   23.0702  +0.7771i$  & $   23.0702  -0.7771i$  & $   32.6585  $ \\
				\hline
				\multicolumn{5}{|c|}{$\boldsymbol{\beta}_3(x,y)$}\\
				\hline
				  6560 & $   13.1826  $   & $   23.3350  +0.9878i$  & $   23.3350  -0.9878i$  & $   33.3089  $ \\
				 14640 & $   13.1301  $   & $   23.1726  +0.9767i$  & $   23.1726  -0.9767i$  & $   32.9983  $ \\
				 25920 & $   13.1117  $   & $   23.1155  +0.9712i$  & $   23.1155  -0.9712i$  & $   32.8861  $ \\
				 40400 & $   13.1031  $   & $   23.0890  +0.9680i$  & $   23.0890  -0.9680i$  & $   32.8329  $ \\
				Order &                 2.02    &                 2.01    &                 2.01    &                 1.93 \\
				Extrap &   $   13.0881  $   & $   23.0417  +0.9653i$  & $   23.0417  -0.9653i$  & $   32.7332  $ \\
				\hline
				\multicolumn{5}{|c|}{$\boldsymbol{\beta}_4(x,y)$}\\
				\hline
				   6560 & $   13.1838  $   & $   23.3357  +0.9529i$  & $   23.3357  -0.9529i$  & $   33.2570  $ \\
				  14640 & $   13.1312  $   & $   23.1732  +0.9440i$  & $   23.1732  -0.9440i$  & $   32.9483  $ \\
				  25920 & $   13.1127  $   & $   23.1159  +0.9356i$  & $   23.1159  -0.9356i$  & $   32.8313  $ \\
				  40400 & $   13.1042  $   & $   23.0894  +0.9333i$  & $   23.0894  -0.9333i$  & $   32.7793  $ \\
				 Order &                 2.01    &                 2.00    &                 2.00    &                 1.88 \\
				 Extrap &   $   13.0889  $   & $   23.0415  +0.9307i$  & $   23.0415  -0.9307i$  & $   32.6724  $ \\
				\hline
				\hline             
			\end{tabular}
	\end{center}}

	\smallskip
	
	\label{table-square2D-BDM_variablebeta}
\end{table}

\begin{figure}[!hbt]\centering
	\begin{minipage}{0.49\linewidth}\centering
		\includegraphics[scale=0.35, trim=0cm 0cm 1.8cm 1.2cm, clip]{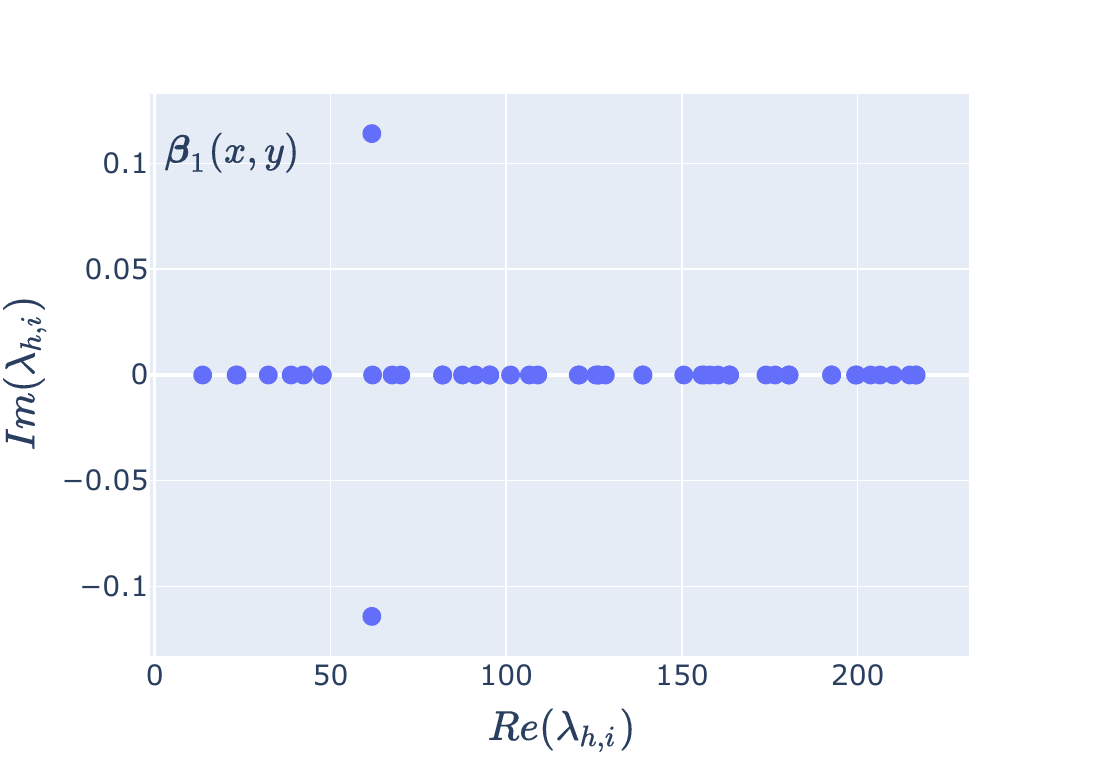}
	\end{minipage}
	\begin{minipage}{0.49\linewidth}\centering
		\includegraphics[scale=0.35, trim=0cm 0cm 1.8cm 1.2cm,clip]{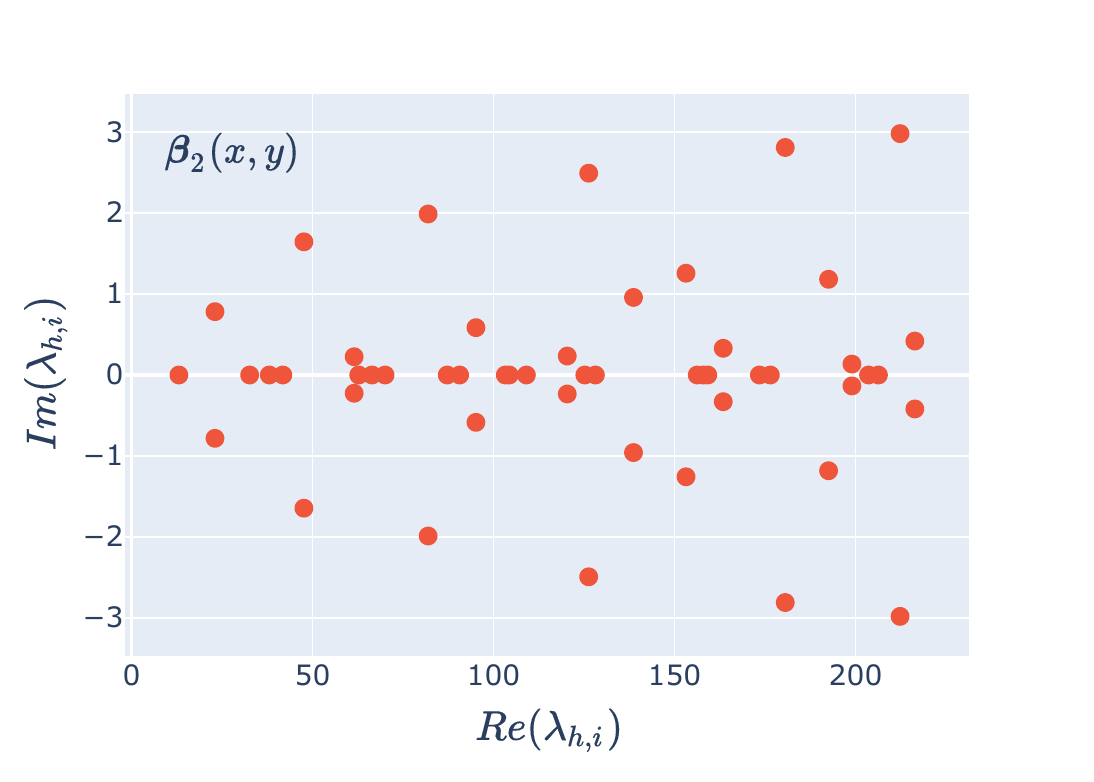}
	\end{minipage}\\
	\begin{minipage}{0.49\linewidth}\centering
		\includegraphics[scale=0.35, trim=0cm 0cm 1.8cm 1.2cm,clip]{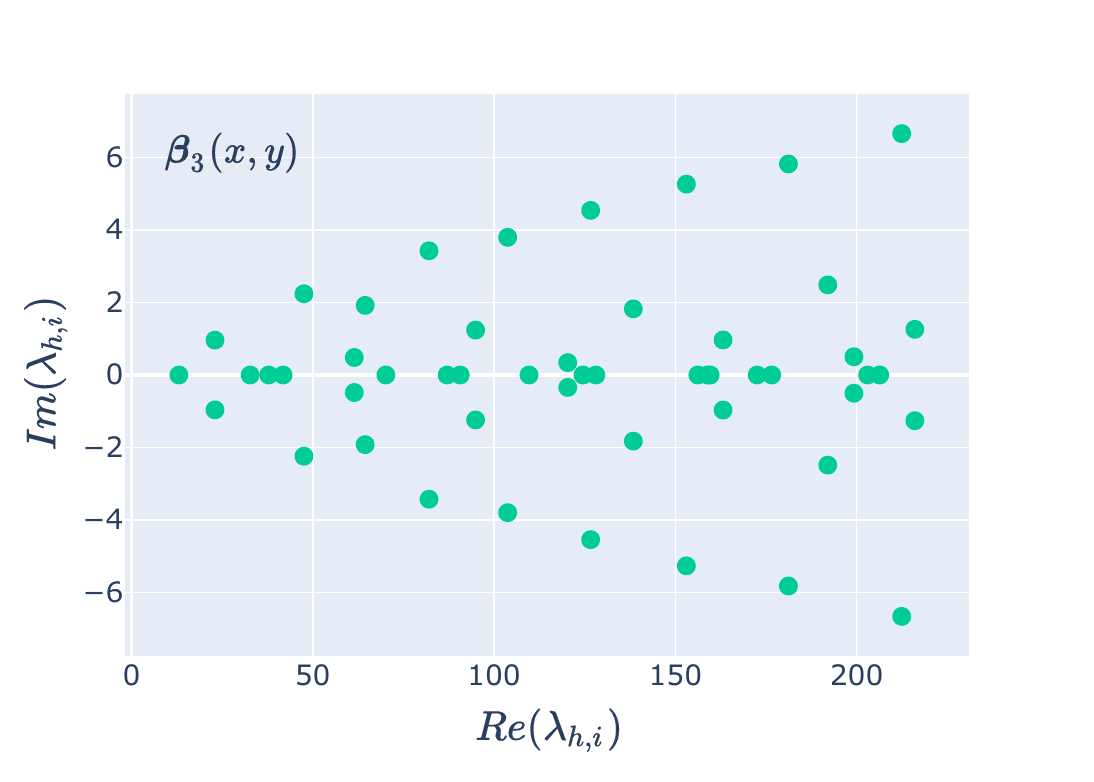}
	\end{minipage}
	\begin{minipage}{0.49\linewidth}\centering
		\includegraphics[scale=0.35, trim=0cm 0cm 1.8cm 1.2cm,clip]{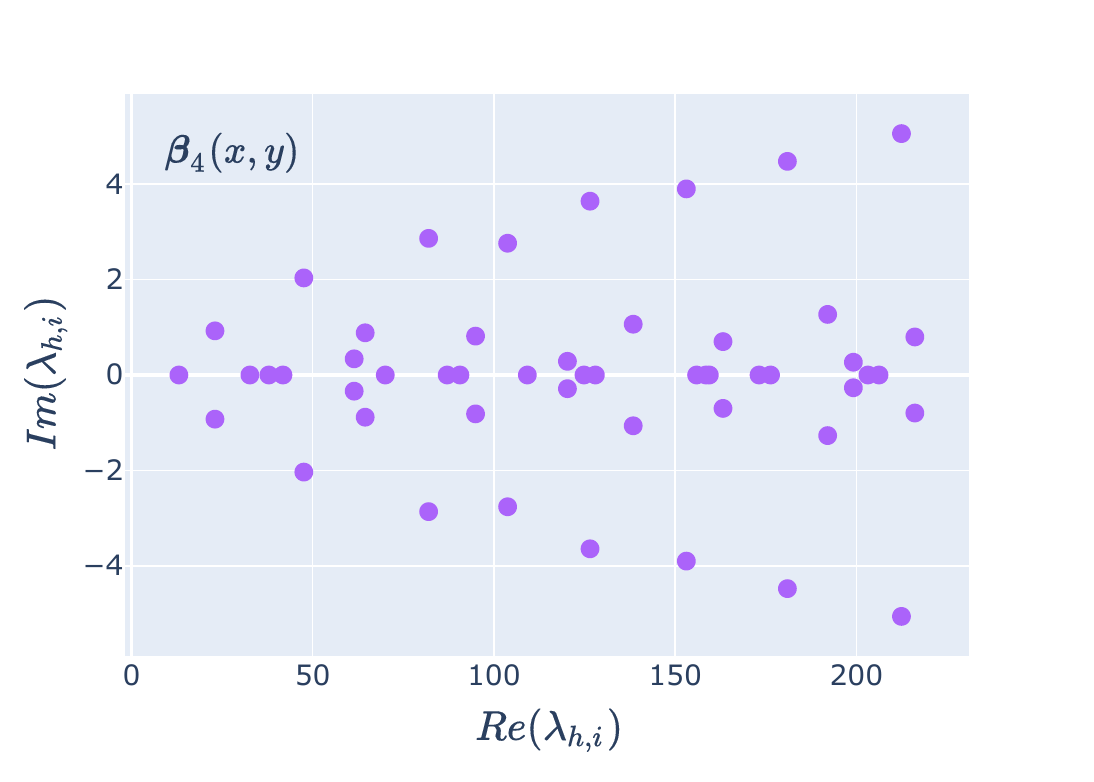}
	\end{minipage}\\
	\caption{Test \ref{subsec:square-domain2D}. Computed eigenvalues distribution on the square domain with different choices of $\boldsymbol{\beta}$ and $N=100$.}
	\label{fig:square-domain-complejos}
\end{figure}

\subsubsection{L-shaped domain}\label{subsec:lshape-2D}
We now focus on the L-shaped domain, defined by $\Omega:=(-1,1)^1\backslash(-1,0)^2$. Here, we have a singularity at $(0,0)$ which yield to a suboptimal rate of convergence for the eigenvalues.  We study the convergence and the spectrum behavior when changing the convective velocity.

Convergence results for the simplest case $\boldsymbol{\beta}_1$ are presented in Table \ref{table-lshape2D-beta10}. It notes that, because of the singularity, the first eigenfunction is singular, hence we expect at least $\mathcal{O}(h^{2\min\{r,k+1\}})$, with $2r\geq 1.08$. The observed experimental rate for $\lambda_{h,1}$ is approximately $\mathcal{O}(h^{1.7})$. The rest of eigenvalues show an optimal rate of convergence. On Table \ref{table-lshape2D-BDM_variablebeta} we have the first fourth computed eigenvalues for different values of $\boldsymbol{\beta}_1$ using Raviart-Thomas elements. The rates of convergence are similar to that of Table \ref{table-lshape2D-beta10}. We complement the test by plotting the discrete velocity and pressure (Figure \ref{fig:lshape-2D-uhs}). 
A clear right-shift is observed, with high gradients for the pressure and pseudostress near the re-entrant corner.

We now pass to the study of the complex spectrum. The distribution of the first 50 real and complex eigenvalues are plotted in Figure \ref{fig:lshape-domain-complejos}. We note that for $\boldsymbol{\beta}_1$ there are only two complex conjugate eigenvalues. This is similar to the results of the square domain. In contrast, the complex spectrum for the rest of convective velocities is considerable lower. For instance, only two complex eigenvalues for $\boldsymbol{\beta}_2$ were computed, and six for $\boldsymbol{\beta}_4$. Because of the non-symmetric nature of the problem, the uniformity of the mesh and number of elements, we could expect different complex distribution on other cases.
\begin{table}[hbt!]
	\centering 
	{\footnotesize
		\begin{center}
			\caption{Example \ref{subsec:lshape-2D}. Convergence behavior of the first four lowest computed eigenvalues on the lshaped domain with convective velocity $\boldsymbol{\beta}=\boldsymbol{\beta}_1$.}
			\begin{tabular}{|c| c c c c|}
				\hline
				\hline
				\texttt{dof}   &    $\lambda_{h,1}$  &   $\lambda_{h,2}$   &   $\lambda_{h,3}$  &   $\lambda_{h,4}$  \\
				\hline
				\multicolumn{5}{|c|}{$\mathbb{RT}_0$}\\
				\hline
				 32080 & $   32.6886  $   & $   37.0604  $  & $   42.3640  $  & $   49.2095  $ \\
				 55890 & $   32.7644  $   & $   37.0853  $  & $   42.3757  $  & $   49.2280  $ \\
				 87680 & $   32.8070  $   & $   37.0966  $  & $   42.3812  $  & $   49.2368  $ \\
				126870 & $   32.8302  $   & $   37.1035  $  & $   42.3844  $  & $   49.2416  $ \\
				Order &                 1.61    &                 2.06    &                 2.02    &                 2.08 \\
				Extrap&   $   32.9007  $   & $   37.1170  $  & $   42.3911  $  & $   49.2517  $ \\
				\hline
				\multicolumn{5}{|c|}{$\mathbb{BDM}_1$}\\
				\hline
				 51424 & $   32.9375  $   & $   37.1790  $  & $   42.4697  $  & $   49.3601  $ \\
				 89552 & $   32.9377  $   & $   37.1533  $  & $   42.4381  $  & $   49.3145  $ \\
				140448 & $   32.9384  $   & $   37.1405  $  & $   42.4226  $  & $   49.2923  $ \\
				203184 & $   32.9389  $   & $   37.1337  $  & $   42.4144  $  & $   49.2804  $ \\
				Order &                 1.76    &                 1.98    &                 2.02    &                 2.03 \\
				Extrap&   $   32.9395$   & $   37.1180  $  & $   42.3960  $  & $   49.2541  $ \\
				\hline
				\hline             
			\end{tabular}
	\end{center}}
	\smallskip
	
	\label{table-lshape2D-beta10}
\end{table}

\begin{table}[hbt!]
	\centering 
	{\footnotesize
		\begin{center}
			\caption{Example \ref{subsec:square-domain2D}. Convergence behavior of the first four lowest computed eigenvalues on the square domain with $\mathbb{RT}_0$ elements for the velocity field and different choices of $\boldsymbol{\beta}$.  }
			\begin{tabular}{|c| c c c c|}
				\hline
				\hline
				\texttt{dof}   &    $\lambda_{h,1}$  &   $\lambda_{h,2}$   &   $\lambda_{h,3}$  &   $\lambda_{h,4}$  \\
				\hline
				\multicolumn{5}{|c|}{$\boldsymbol{\beta}_2(x,y)$}\\
				\hline
				32080 & $   32.1199  $   & $   36.9210  $  & $   42.1028  $  & $   49.0400  $ \\
				55890 & $   32.1938  $   & $   36.9468  $  & $   42.1145  $  & $   49.0584  $ \\
				87680 & $   32.2352  $   & $   36.9586  $  & $   42.1199  $  & $   49.0672  $ \\
				126870 & $   32.2579  $   & $   36.9656  $  & $   42.1231  $  & $   49.0716  $ \\
				Order &                 1.60    &                 2.06    &                 2.05    &                 2.16 \\
				Extrap&   $   32.3273  $   & $   36.9798  $  & $   42.1296  $  & $   49.0809  $ \\
				\hline
				\multicolumn{5}{|c|}{$\boldsymbol{\beta}_3(x,y)$}\\
				\hline
				32080 & $   32.1984  $   & $   36.7792  $  & $   42.1642  $  & $   49.1085  $ \\
				55890 & $   32.2729  $   & $   36.8039  $  & $   42.1745  $  & $   49.1256  $ \\
				87680 & $   32.3147  $   & $   36.8153  $  & $   42.1793  $  & $   49.1339  $ \\
				126870 & $   32.3373  $   & $   36.8222  $  & $   42.1820  $  & $   49.1381  $ \\
				Order &                 1.62    &                 2.04    &                 2.12    &                 2.13 \\
				Extrap&   $   32.4058  $   & $   36.8359  $  & $   42.1873  $  & $   49.1470  $ \\
				\hline
				\multicolumn{5}{|c|}{$\boldsymbol{\beta}_4(x,y)$}\\
				\hline
				32080 & $   32.1601  $   & $   36.8090  $  & $   42.1161  $  & $   49.0450  $ \\
				55890 & $   32.2342  $   & $   36.8342  $  & $   42.1268  $  & $   49.0627  $ \\
				87680 & $   32.2754  $   & $   36.8459  $  & $   42.1316  $  & $   49.0711  $ \\
				126870 & $   32.2979  $   & $   36.8530  $  & $   42.1343  $  & $   49.0756  $ \\
				Order &                 1.63    &                 2.01    &                 2.19    &                 2.11 \\
				Extrap&   $   32.3651  $   & $   36.8675  $  & $   42.1395  $  & $   49.0849  $ \\
				\hline
				\hline             
			\end{tabular}
	\end{center}}

	\smallskip
	
	\label{table-lshape2D-BDM_variablebeta}
\end{table}

\begin{figure}[!hbt]
	\centering
	\begin{minipage}{0.24\linewidth}\centering
		{$\bu_{h,1}$}\\
		\includegraphics[scale=0.09,trim=23cm 5cm 23cm 5cm,clip]{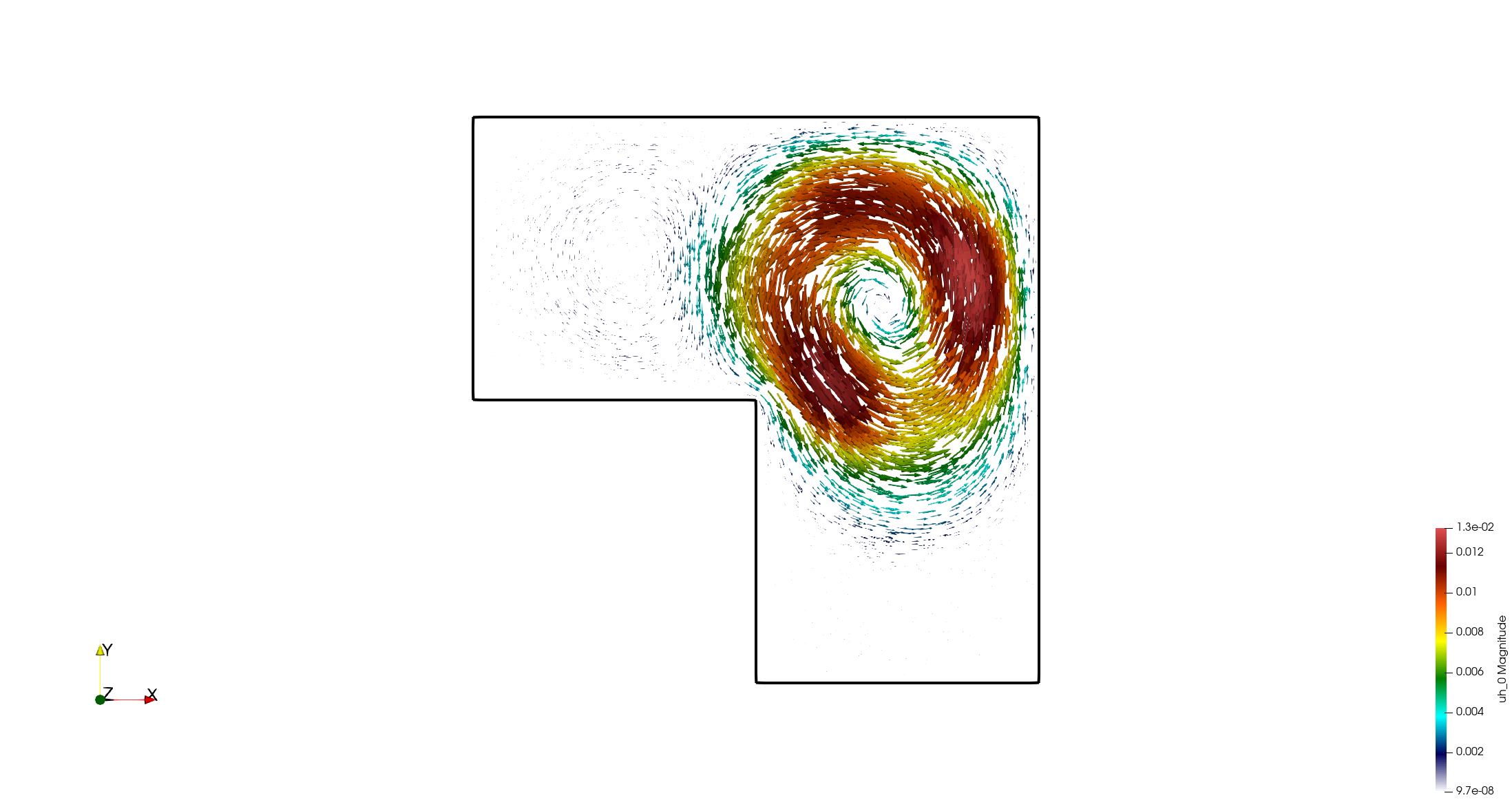}
	\end{minipage}
	\begin{minipage}{0.24\linewidth}\centering
		{$\bu_{h,4}$}\\
		\includegraphics[scale=0.09,trim=23cm 5cm 23cm 5cm,clip]{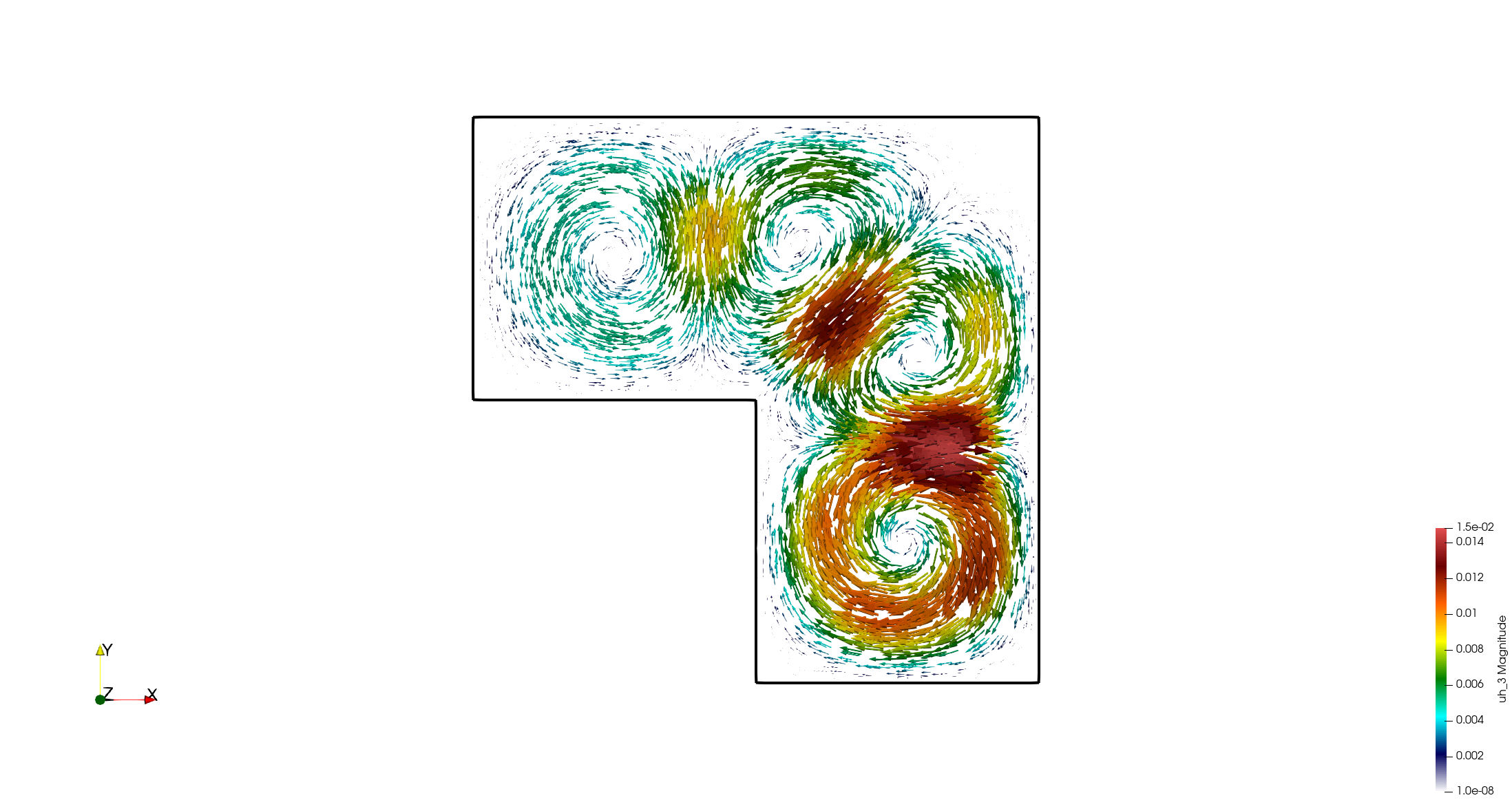}
	\end{minipage}
	\begin{minipage}{0.24\linewidth}\centering
		{$p_{h,1}$}\\
		\includegraphics[scale=0.09,trim=23cm 5cm 23cm 5cm,clip]{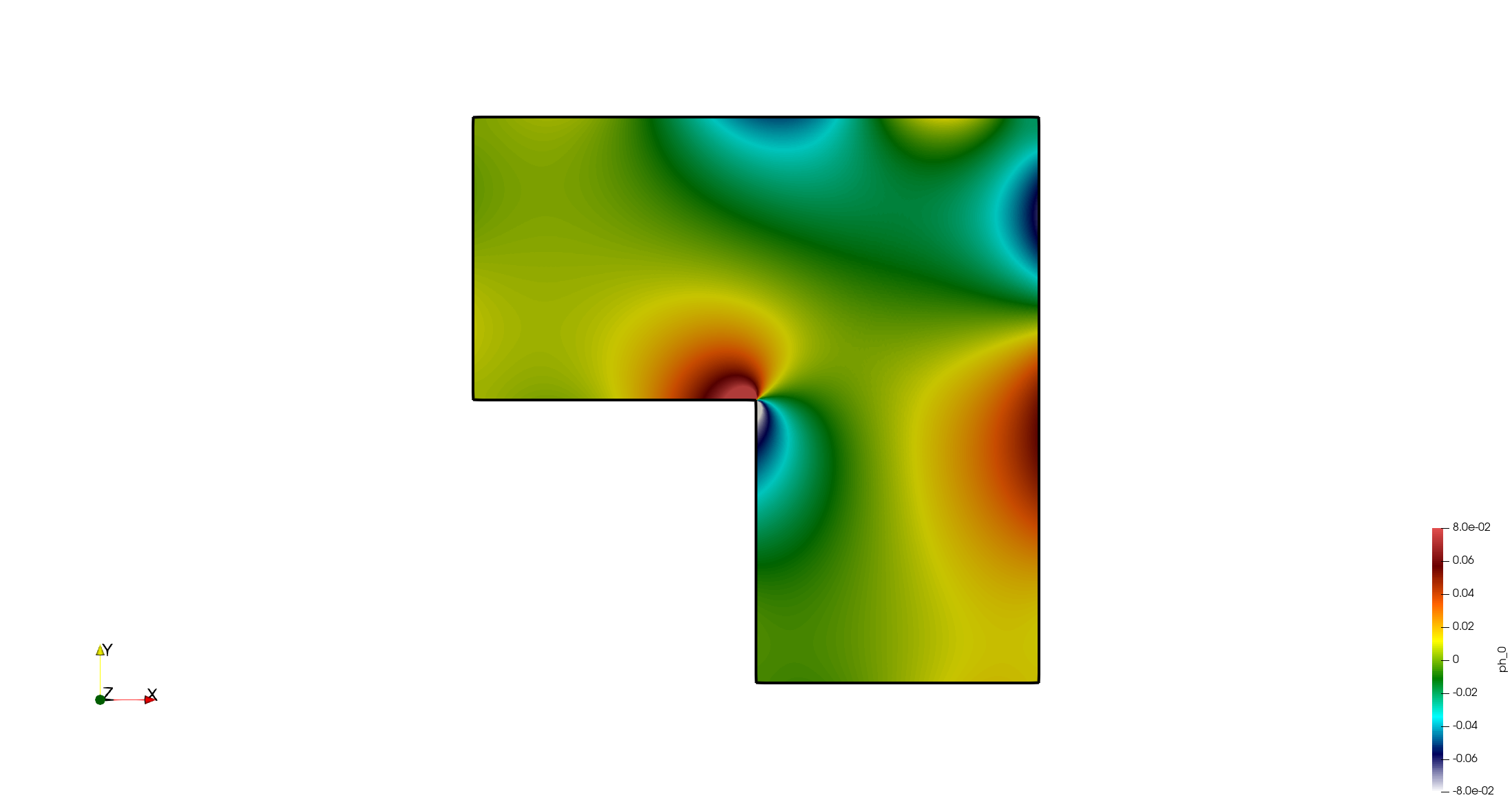}
	\end{minipage}
	\begin{minipage}{0.24\linewidth}\centering
		{$p_{h,4}$}\\
		\includegraphics[scale=0.09,trim=23cm 5cm 23cm 5cm,clip]{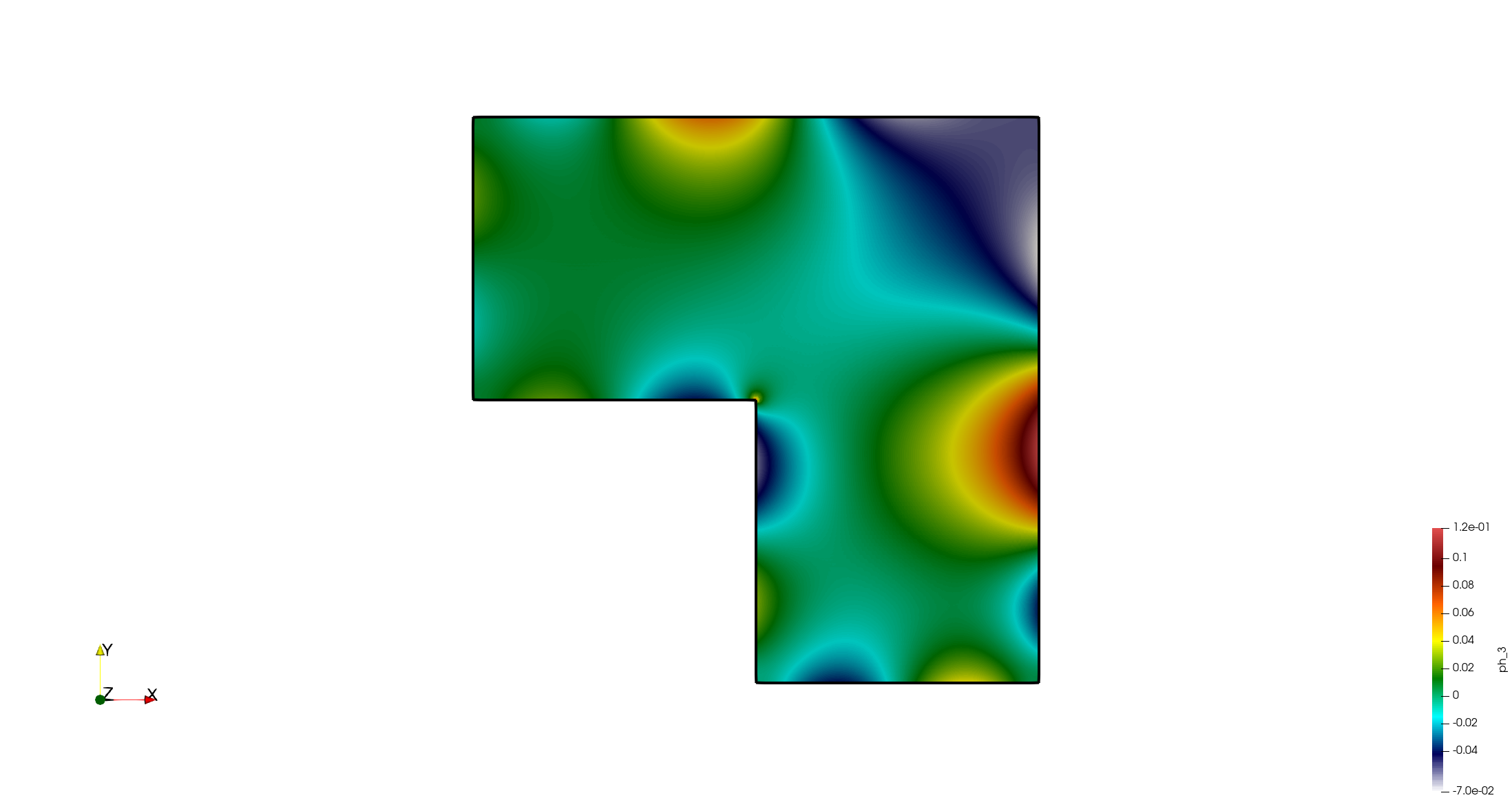}
	\end{minipage}\\
	\caption{Test \ref{subsec:lshape-2D}. Velocity streamlines and pressures surface plot for the first and fourth computed eigenvalue with $\boldsymbol{\beta}_1$.}
	\label{fig:lshape-2D-uhs}
\end{figure}

\begin{figure}[!hbt]\centering
	\begin{minipage}{0.49\linewidth}\centering
		\includegraphics[scale=0.35, trim=0cm 0cm 1.8cm 1.2cm, clip]{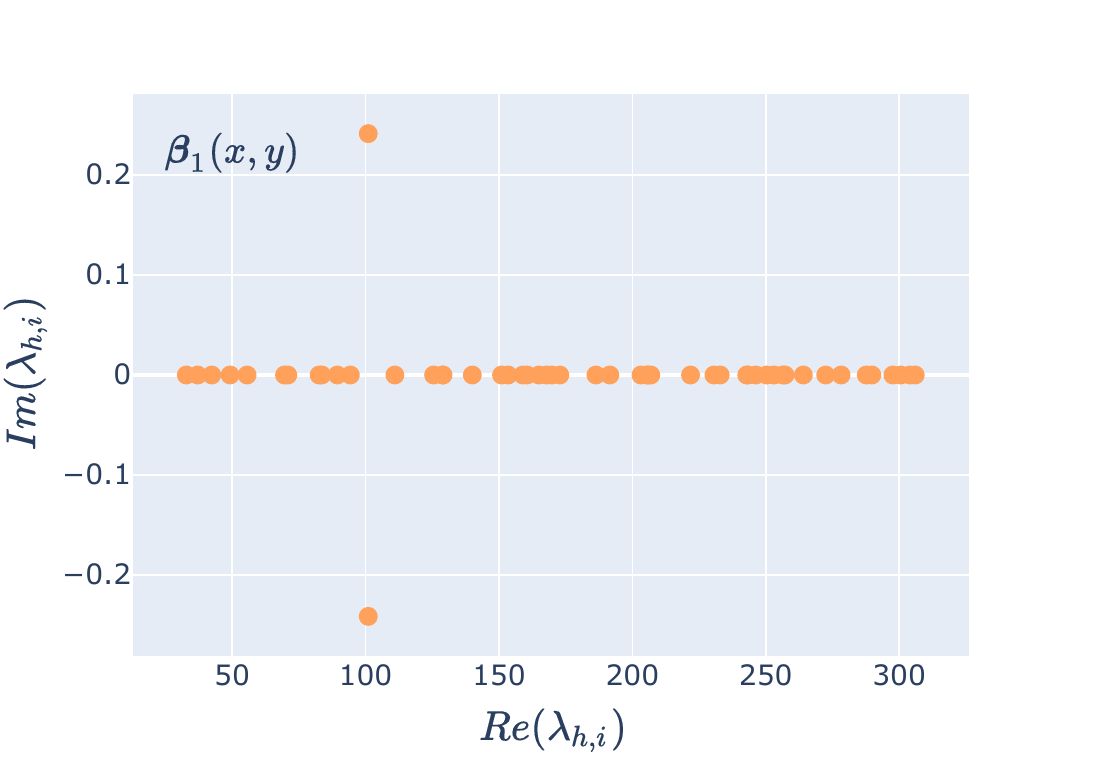}
	\end{minipage}
	\begin{minipage}{0.49\linewidth}\centering
		\includegraphics[scale=0.35, trim=0cm 0cm 1.8cm 1.2cm,clip]{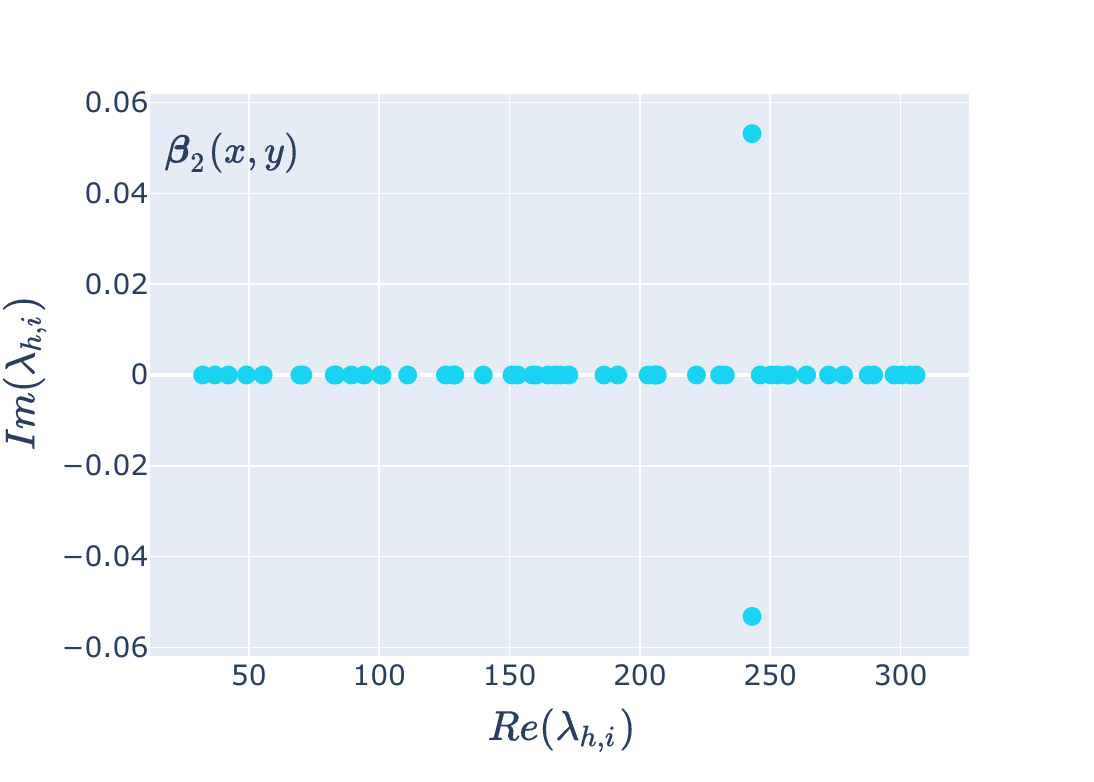}
	\end{minipage}\\
	\begin{minipage}{0.49\linewidth}\centering
		\includegraphics[scale=0.35, trim=0cm 0cm 1.8cm 1.2cm,clip]{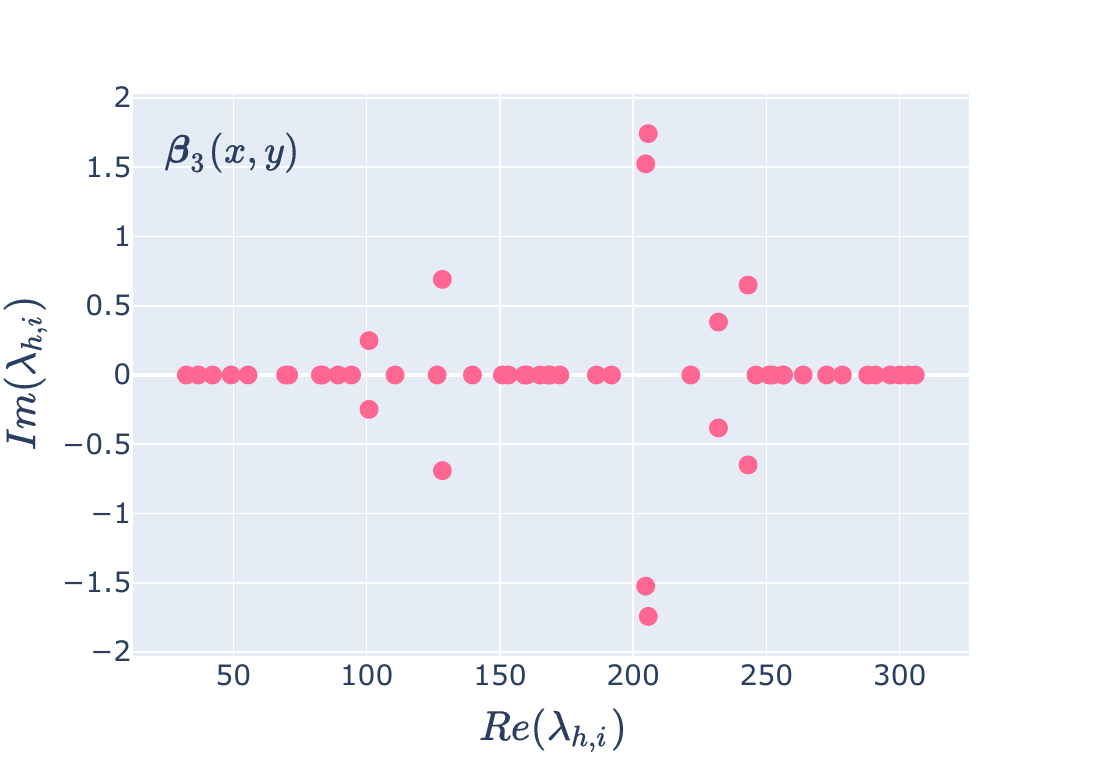}
	\end{minipage}
	\begin{minipage}{0.49\linewidth}\centering
		\includegraphics[scale=0.35, trim=0cm 0cm 1.8cm 1.2cm,clip]{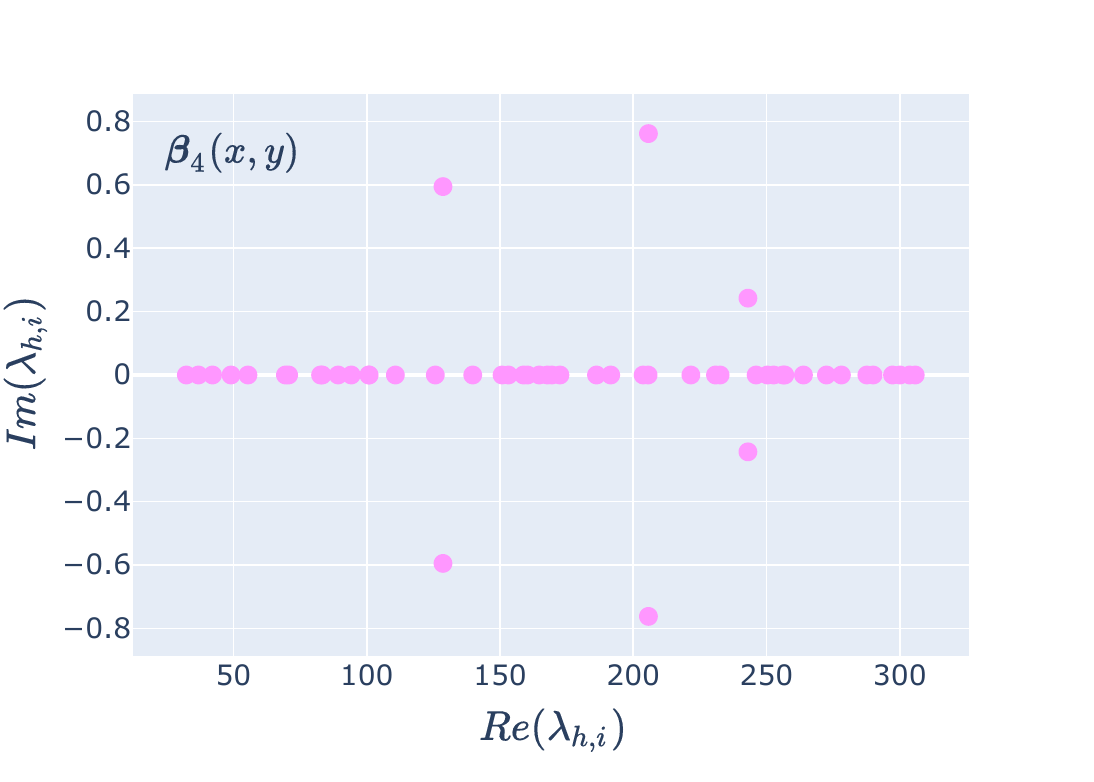}
	\end{minipage}\\
	\caption{Test \ref{subsec:lshape-2D}. Computed eigenvalues distribution on the lshape domain with different choices of $\boldsymbol{\beta}$ and $N=64$.}
	\label{fig:lshape-domain-complejos}
\end{figure}

\subsection{Results in 3D geometries}\label{subsec:3D-examples}
This section is devoted to test the scheme on three dimensional geometries. We consider polygonal and non-polygonal domains. For simplicity, the Cartesian canonical base vectors are considered as divergence-free convective velocities.

\subsubsection{The unit cube domain}\label{subsec:cube-domain3D}
Let us consider the domain $\Omega_c:=(0,1)^3$ together with homogeneous boundary conditions.  For this case, the mesh level $N$ scales as the number of cells such that the number of tetrahedrons is $6(N +1)^3$ and $N \sim  \texttt{dof}^{1/3}$.

By observing the results presented in Table \ref{table-cube3D-BDM} we note that the computational convergence rate is roughly $\mathcal{O}(h^{2k})\simeq\mathcal{O}(\texttt{dof}^{-2k/3})$. A good agreement with the reference values is also observed. Similar results were obtained with Raviart-Thomas elements, so we omit presenting the results. In Figure \ref{fig:uhs-cube3D} we show the $\boldsymbol{\beta}$-shifted velocity field for the first and fourth eigenmode, together with their respective pressure modes. 

\begin{table}[hbt!]
	\centering 
	{\footnotesize
		\begin{center}
			\caption{Example \ref{subsec:cube-domain3D}. Convergence behavior of the first four lowest computed eigenvalues on the unit cube domain with $\mathbb{BDM}_k$ elements and homogeneous boundary conditions.  We consider the field $\boldsymbol{\beta}=(1,0,0)^{\texttt{t}}$. }
			\begin{tabular}{|c c c c |c| c|c|}
				\hline
				\hline
				$N=20$             &  $N=24$         &   $N=28$         & $N=32$ & Order & $\lambda_{\text{extr}}$ & Ref. \cite{LEPE2024116959} \\ 
				\hline
				\multicolumn{7}{|c|}{$k=1$}\\
				\hline
				   62.8177  &    62.6992  &    62.6271  &    62.5801  & 1.97 &    62.4229 &    62.7468  \\
				63.0814  &    62.9678  &    62.8994  &    62.8552  & 2.03 &    62.7122 &    62.8363  \\
				63.1252  &    62.9991  &    62.9228  &    62.8733  & 2.00 &    62.7103 &    62.9327  \\
				92.7328  &    92.4753  &    92.3188  &    92.2168  & 1.98 &    91.8780 &    92.4487  \\
				\hline
				\multicolumn{7}{|c|}{$k=2$}\\
				\hline	
				
				62.6637  &    62.4426  &    62.4289  &    62.4266  & 3.83 &    62.4253 &    62.7468  \\
				62.9262  &    62.7259  &    62.7139  &    62.7118  & 3.88 &    62.7107 &    62.8363  \\
				62.9713  &    62.7293  &    62.7146  &    62.7120  & 3.85 &    62.7107 &    62.9327  \\
				92.5972  &    91.9344  &    91.8918  &    91.8843  & 3.77 &    91.8801 &    92.4487  \\					
				\hline
				\hline             
			\end{tabular}
	\end{center}}
	\smallskip
	
	\label{table-cube3D-BDM}
\end{table}

\begin{figure}[!hbt]\centering
\begin{minipage}{0.49\linewidth}\centering
	{$\bu_{h,1}$}\\
	\includegraphics[scale=0.1,trim=20cm 0cm 20cm 2cm,clip]{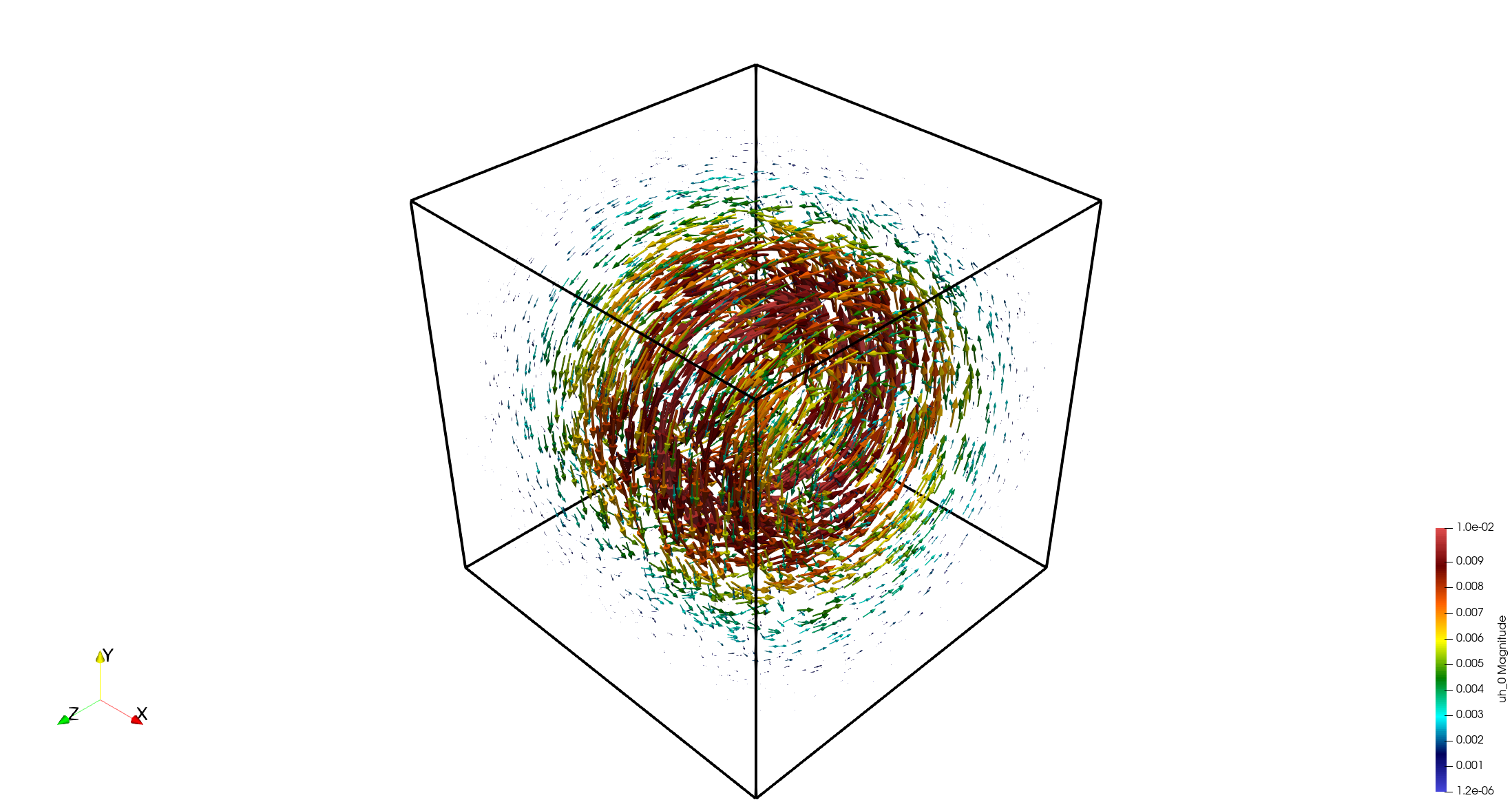}
\end{minipage}	
\begin{minipage}{0.49\linewidth}\centering
	{$\bu_{h,4}$}\\
	\includegraphics[scale=0.1,trim=20cm 0cm 20cm 2cm,clip]{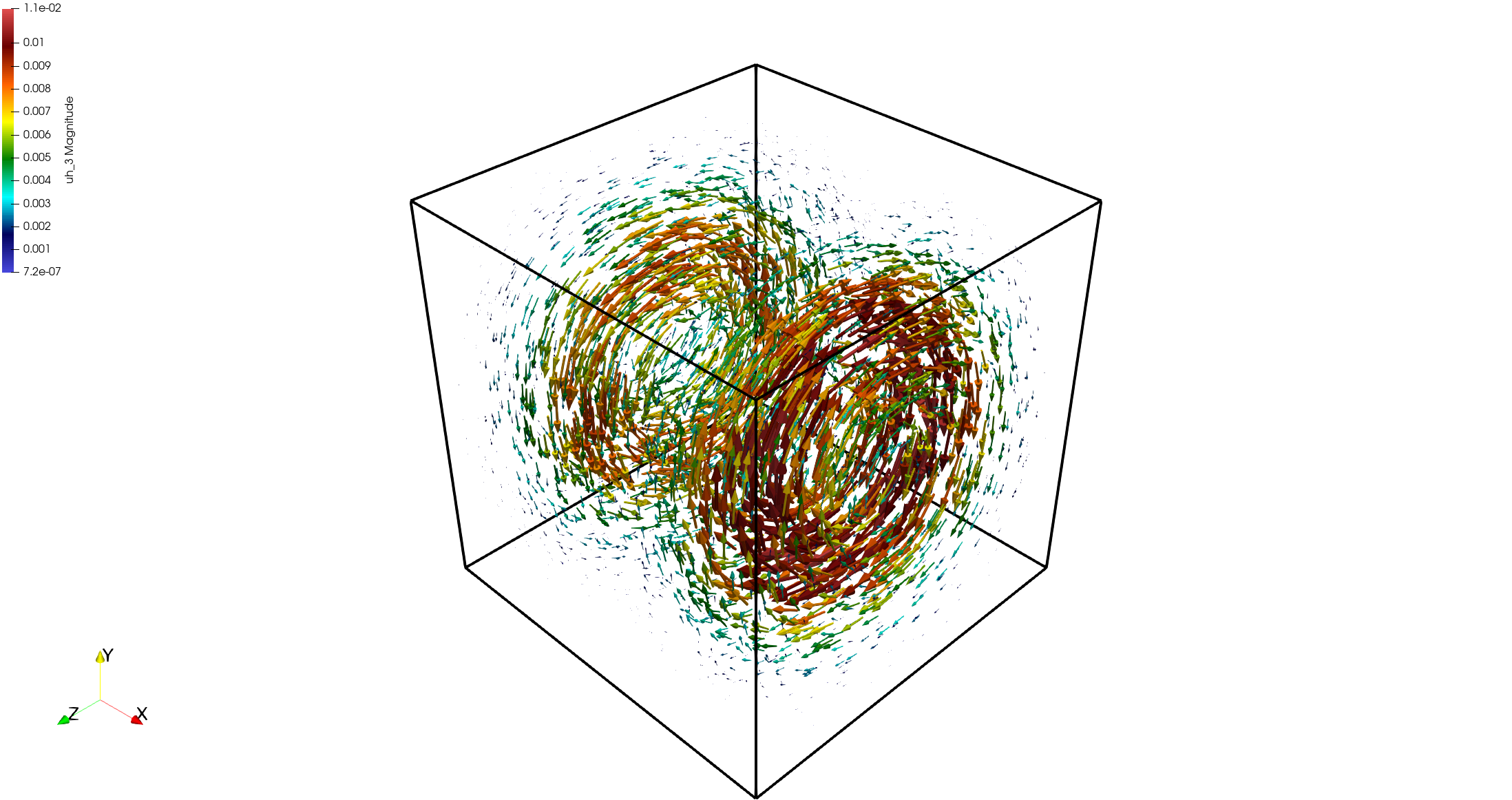}
\end{minipage}\\
\begin{minipage}{0.49\linewidth}\centering
	{$p_{h,1}$}\\
	\includegraphics[scale=0.1,trim=20cm 0cm 20cm 2cm,clip]{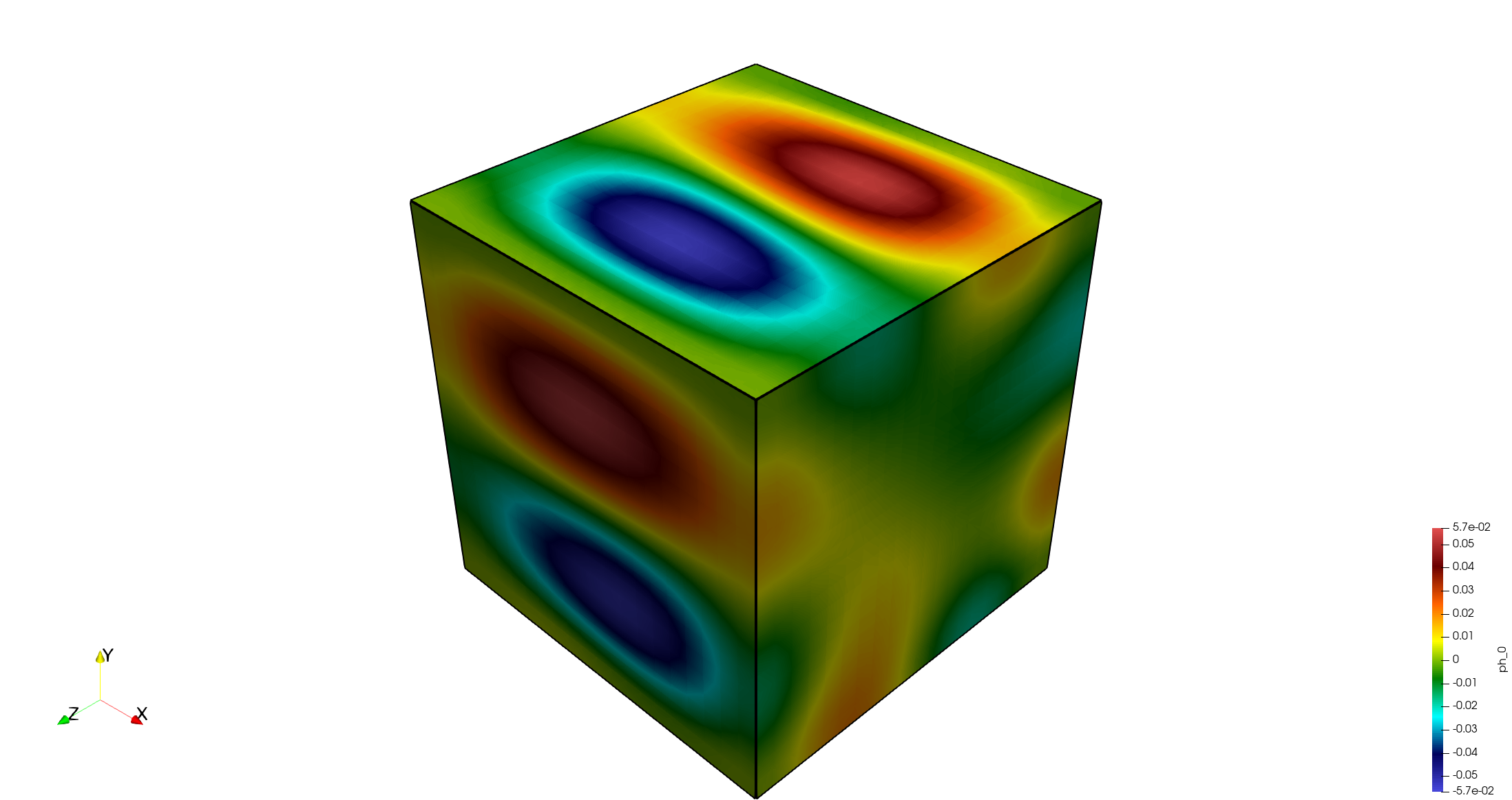}
\end{minipage}	
\begin{minipage}{0.49\linewidth}\centering
	{$p_{h,4}$}\\
	\includegraphics[scale=0.1,trim=20cm 0cm 20cm 2cm,clip]{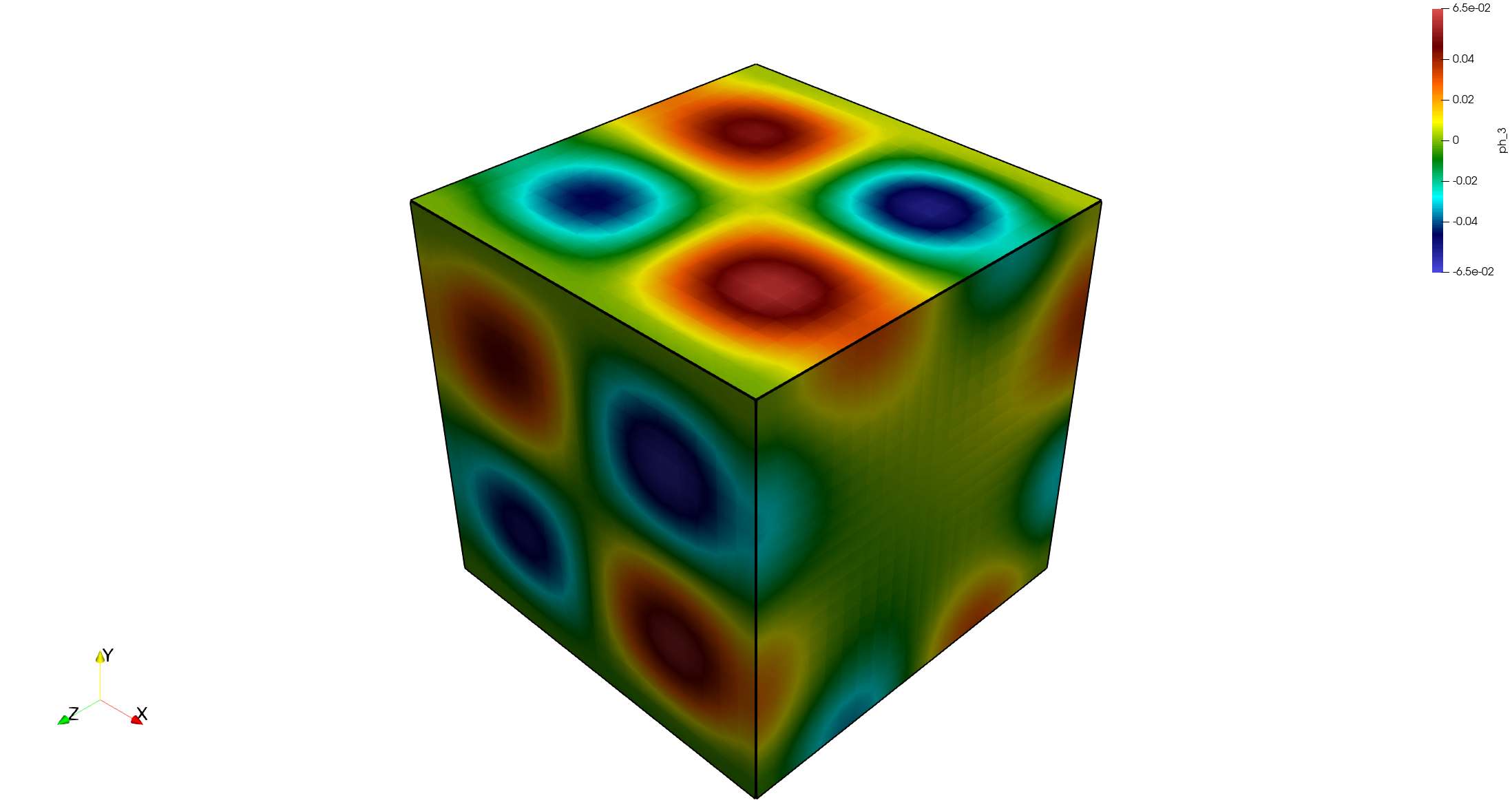}
\end{minipage}
\caption{Test \ref{subsec:cube-domain3D}. Velocity streamlines for the first and fourth eigenmode on the unit cube domain with convective velocity $\boldsymbol{\beta}=(1,0,0)^\texttt{t}$.}
\label{fig:uhs-cube3D}
\end{figure}

\subsubsection{A cylinder domain}\label{subsec:cylinder-domain}
In this experiment we test the scheme with a non-polygonal domain. We consider a cylinder domain defined by
$
\Omega:=(0,L)\times \Omega_c
$,
where $L=2$ and $\Omega_c:=\{(y,z)\in\mathbb{R}^2\,:\, y^2+z^2\leq 1/4\}$. The convective velocity is taken in the $x-$direction as $\boldsymbol{\beta}=(1,0,0)^\texttt{t}$. 

Table \ref{table:cylinder3D-n16} contains the resulting computed eigenfunctions together with their respective convergence rate. Unlike the cube domain, the first two eigenvalues are complex (including their conjugates). A rate of $\mathcal{O}(h^2)\simeq \mathcal{O}(\texttt{dof}^{-2/3})$ is observed, which is the best to expect if we approximate a cylinder domain with tetrahedrons. We recall that a second order mesh can be used for higher order schemes to avoid the variational crime. This is followed by plots of the real and complex velocity field pressures in Figure \ref{fig:cylinder-uh1}.  

	

\begin{table}[hbt!]
	\setlength{\tabcolsep}{4.5pt}
	\centering 
	\caption{Example \ref{subsec:cylinder-domain}. Convergence behavior of the Lowest computed eigenvalues on the cylinder domain with $\mathbb{BDM}_1$ elements and homogeneous boundary conditions. We consider the field $\boldsymbol{\beta}=(1,0,0)^{\texttt{t}}$.}
	\label{table:cylinder3D-n16}
	{\small\begin{tabular}{rcccc}
			\hline\hline
			\texttt{dof}  &    $\lambda_{h,1}$  &   $\lambda_{h,2}$   &   $\lambda_{h,3}$  &   $\lambda_{h,4}$  \\
			\hline
			 93378 & $   61.8787  \pm 1.0879i$   & $      61.9368  \pm1.1256i$ & $   64.1401  $   & $   72.2509  $   \\
			268596 & $   60.5428  \pm1.1911i$   & $   60.5622  \pm1.1990i$& $   62.6688  $   & $   70.3809  $    \\
			606303 & $   60.0615  \pm 1.2198i$     & $   60.0689  \pm 1.2265i$ & $   62.1428  $   & $   69.7285  $   \\
			1133067 & $   59.8299  \pm 1.2367i$    & $   59.8330  \pm 1.2379i$ & $   61.8899  $   & $   69.4062  $  \\
			Order &                 2.30       &                 2.32  		& 2.32  & 2.36    \\
			Extrap &   $   59.4813  \pm 1.2636i$    & $   59.4823  \pm1.2572i$ &   $   61.5148  $   & $   68.9540  $  \\
			\hline
			\hline
	\end{tabular}}
	\smallskip
\end{table}

\begin{figure}[!hbt]\centering
	\begin{minipage}{0.49\linewidth}\centering
		{\footnotesize $Re(\bu_{h,1})$}\\
	\includegraphics[scale=0.15,trim= 23cm 5cm 20cm 8cm, clip]{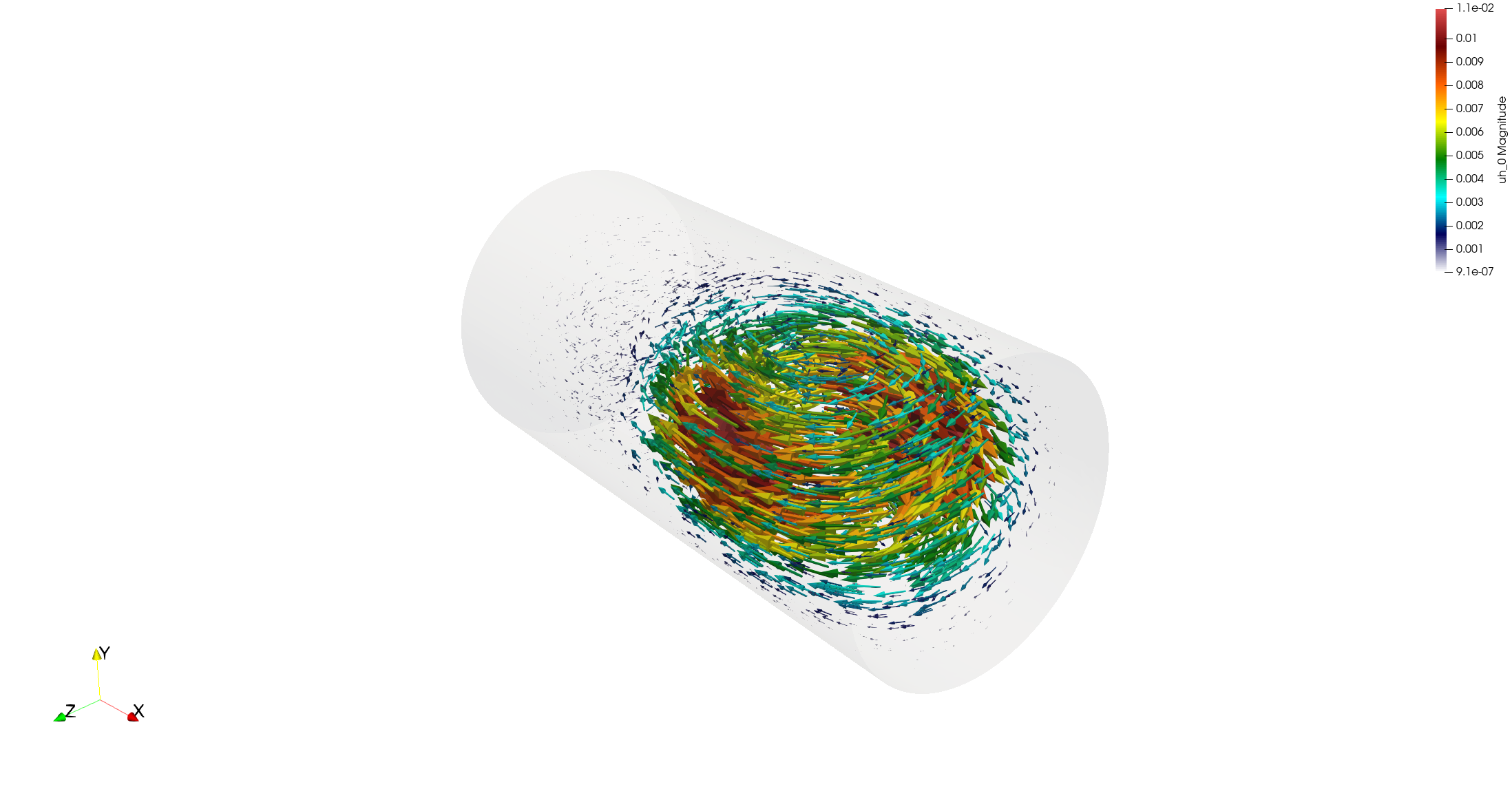}
	\end{minipage}
		\begin{minipage}{0.49\linewidth}\centering
			{\footnotesize $Im(\bu_{h,1})$}\\
		\includegraphics[scale=0.15,trim= 23cm 5cm 20cm 8cm, clip]{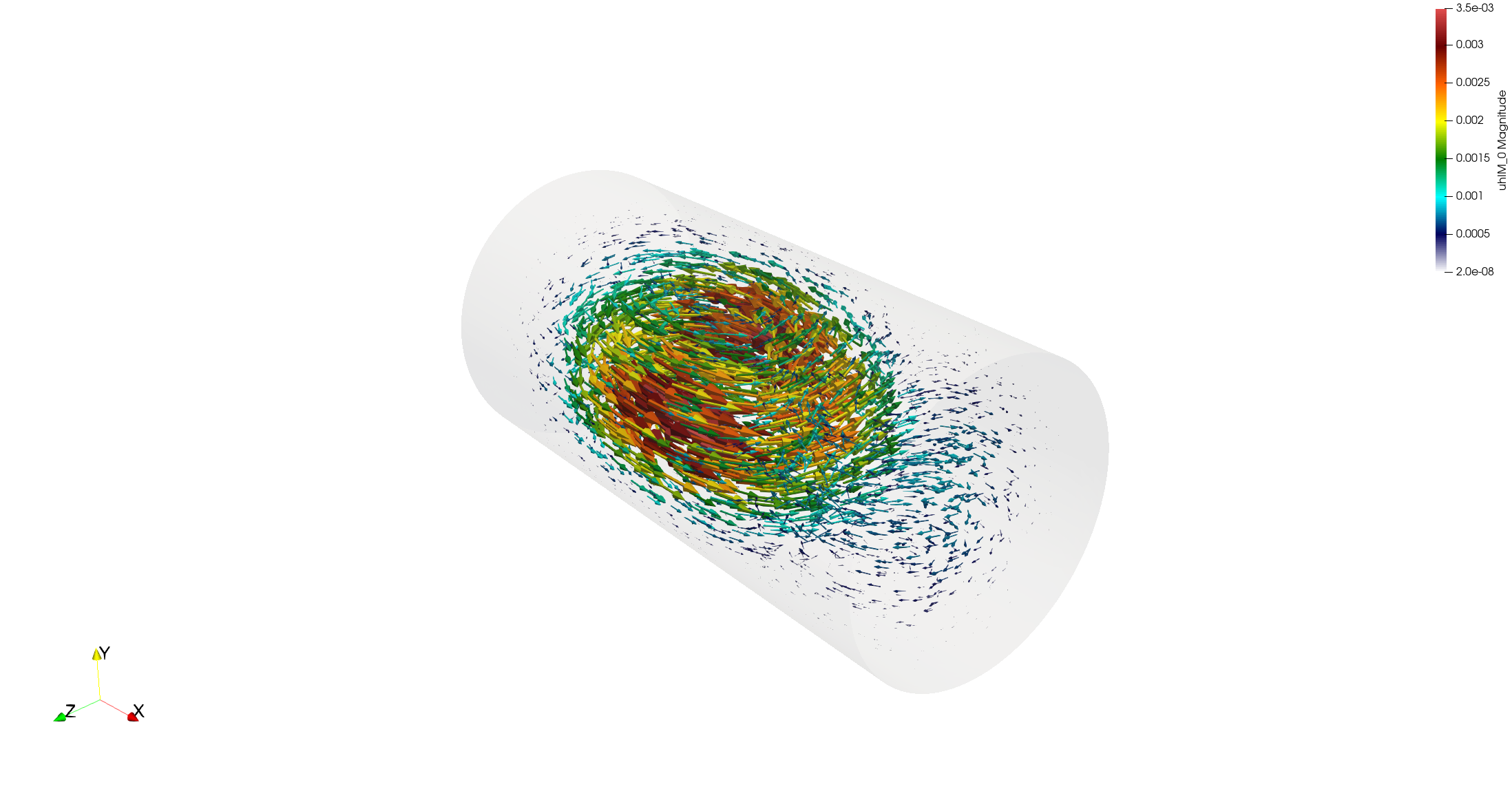}
	\end{minipage}\\
		\begin{minipage}{0.49\linewidth}\centering
			{\footnotesize $Re(p_{h,1})$}\\
		\includegraphics[scale=0.15,trim= 23cm 5cm 20cm 8cm, clip]{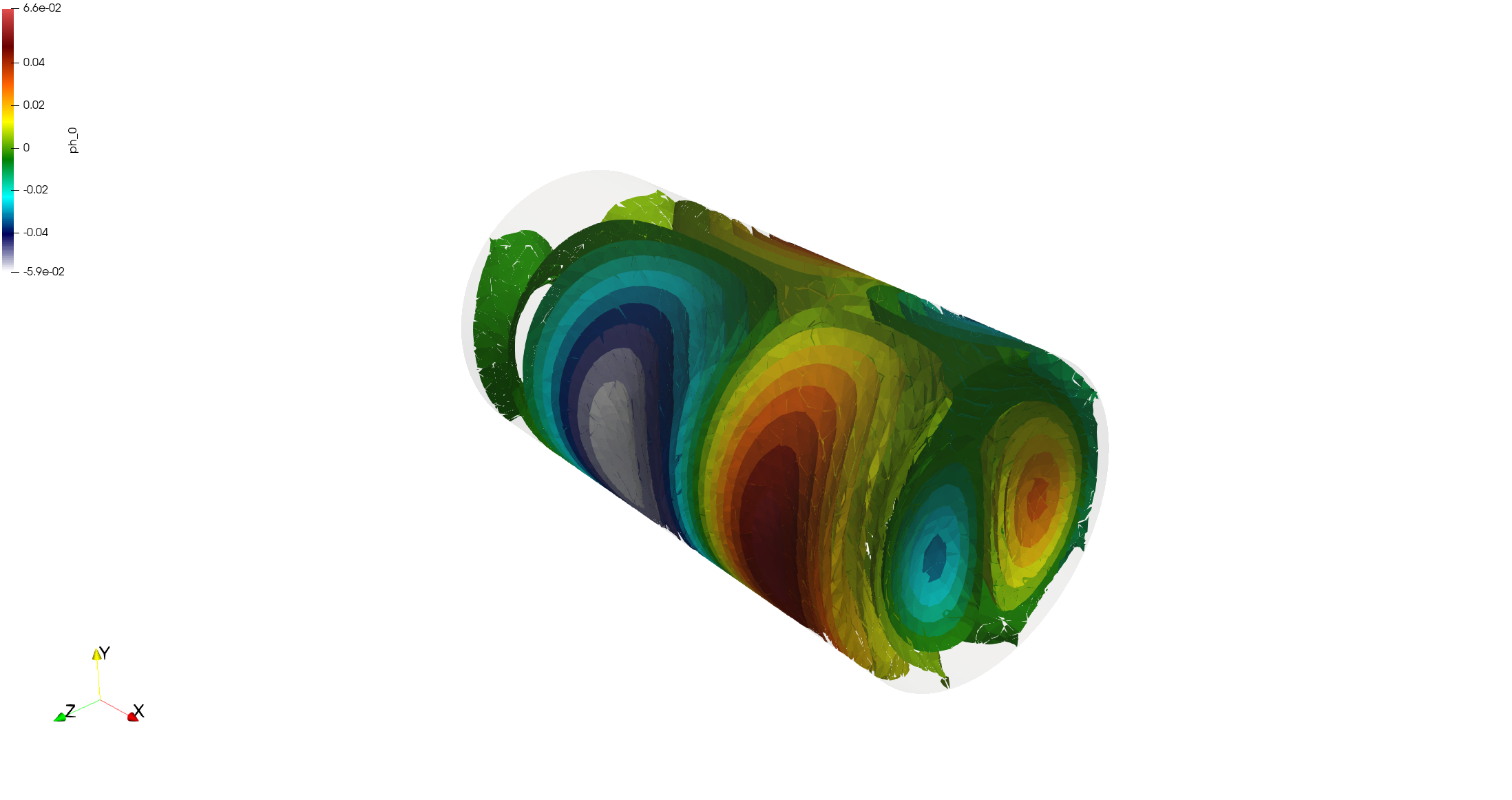}
	\end{minipage}
		\begin{minipage}{0.49\linewidth}\centering
			{\footnotesize $Im(p_{h,1})$}\\
		\includegraphics[scale=0.15,trim= 23cm 5cm 20cm 8cm, clip]{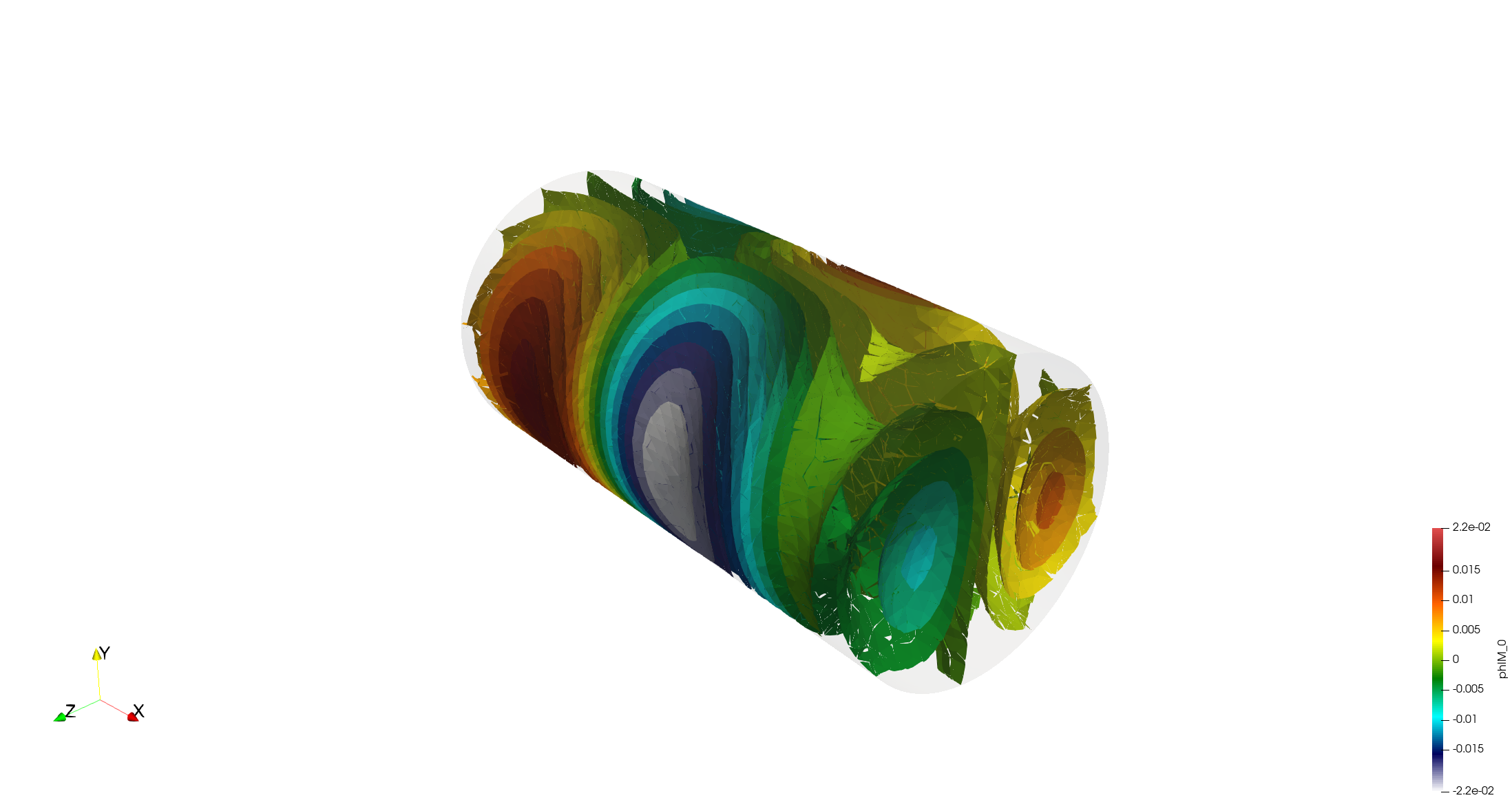}
	\end{minipage}\\
	\caption{Test \ref{subsec:cylinder-domain}. Velocity field and pressure contour plots for the first real and complex eigenmodes in the cylinder domain.}
	\label{fig:cylinder-uh1}
\end{figure}

\subsection{Robustness with respect to $\nu$}\label{subsec:robustness2D3D}
In this experiment we test the robustness of the scheme when we consider small values of $\nu$. According to \cite{John2016}, small values for $\nu$ are interesting for time depending problems. However, we aim to test the theoretical results in the fixed-point theory presented in Section \ref{sec:fixed-point-continuous} and \ref{subsec:the-discrete-eigenvalue-problem}. We consider the square domain from Section \ref{subsec:square-domain2D} and the cube domain from Section \ref{subsec:cube-domain3D}.  The convective velocity is given by $\boldsymbol{\beta}(x,y)=\nu(1,0)^\texttt{t}$ and  $\boldsymbol{\beta}(x,y)=\nu(1,0,0)^\texttt{t}$ for the square and the cube domain, respectively. Note that this yield to consider two case scenarios for this solenoidal vector: normalizing the field so that $\Vert\boldsymbol{\beta}\Vert_{\infty,\Omega}=1$, as stated at the beginning of the numerical section, and $ \Vert\boldsymbol{\beta}\Vert_{\infty,\Omega}=\nu$. Finally, the values for $\nu$ are taken to be $\nu=1/2^j$, for $j=0,...,15.$.  We present here the results with $\mathbb{BDM}_1$. Similar results were achieved by considering $\mathbb{RT}$.

In Figure \ref{fig:robustnes_nonorm} we present the results for the normalized solenoidal field. Although we observe a constant decrease in the computed eigenvalues,  we note that the scheme becomes unstable after the first two values for $\nu$, namely, after $\nu=1/2$.  We observe that the small data requirement can be broken if the fraction $\Vert \boldsymbol{\beta}\Vert_{\infty,\Omega}/\nu$ grows without control. By normalizing the vector, we have that the estimator depends on $1/\nu$, which leads us to the observed instability for small values of $\nu$, where the solver even yields negative eigenvalues.

On the other hand, in Figure \ref{fig:robustnes_norm} we have the opposite case. Note that due to the constant scaling $\Vert \boldsymbol{\beta}\Vert_{\infty,\Omega}/\nu=1$, the error rates remain constant and optimal, independent of the viscosity.  It is worth noting that both scenarios for $\boldsymbol{\beta}$ produce completely different spectrum.

\begin{figure}[!hbt]\centering
	\begin{minipage}{0.49\linewidth}
		\includegraphics[scale=0.35, trim=0cm 0cm 1.8cm 1.2cm, clip]{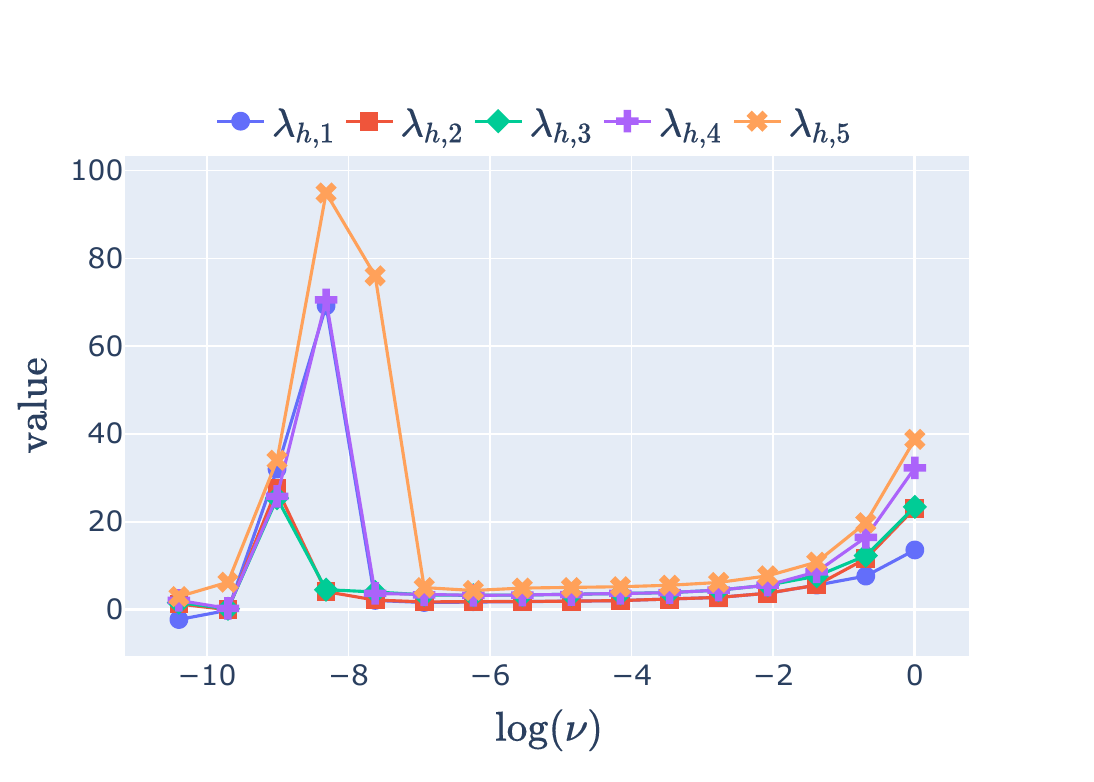}
	\end{minipage}
	\begin{minipage}{0.49\linewidth}
		\includegraphics[scale=0.35, trim=0cm 0cm 1.8cm 1.2cm, clip]{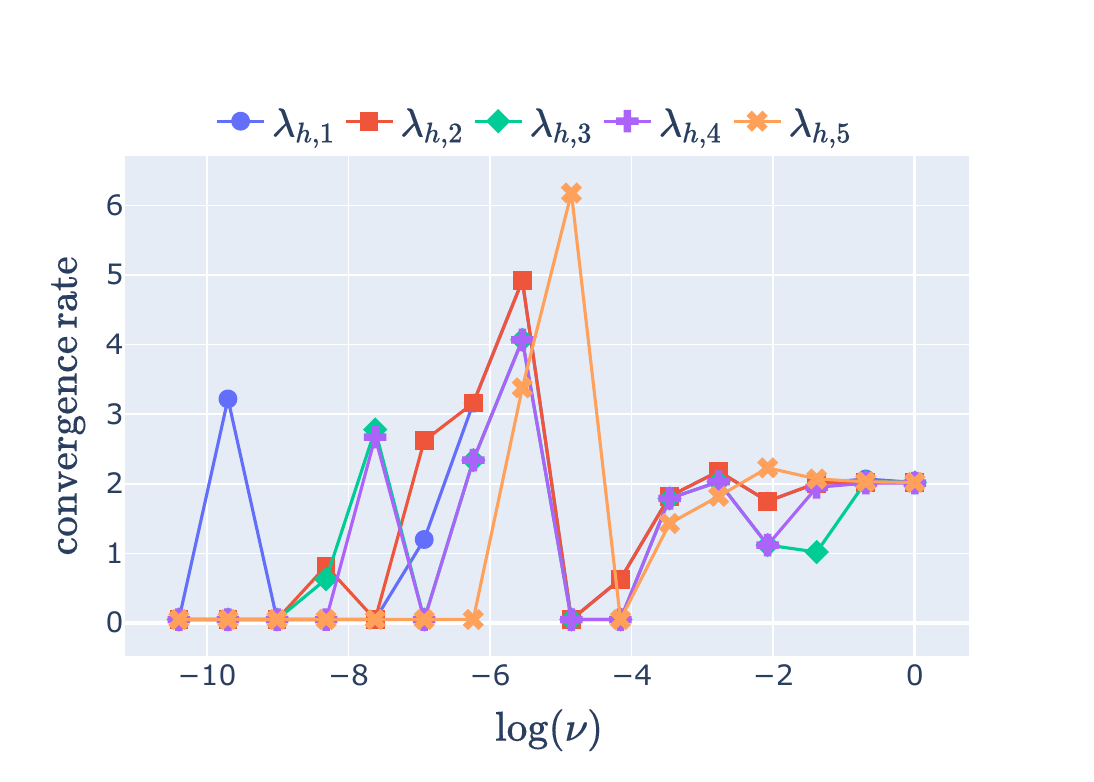}
	\end{minipage}\\
	\begin{minipage}{0.49\linewidth}
		\includegraphics[scale=0.35, trim=0cm 0cm 1.8cm 1.2cm, clip]{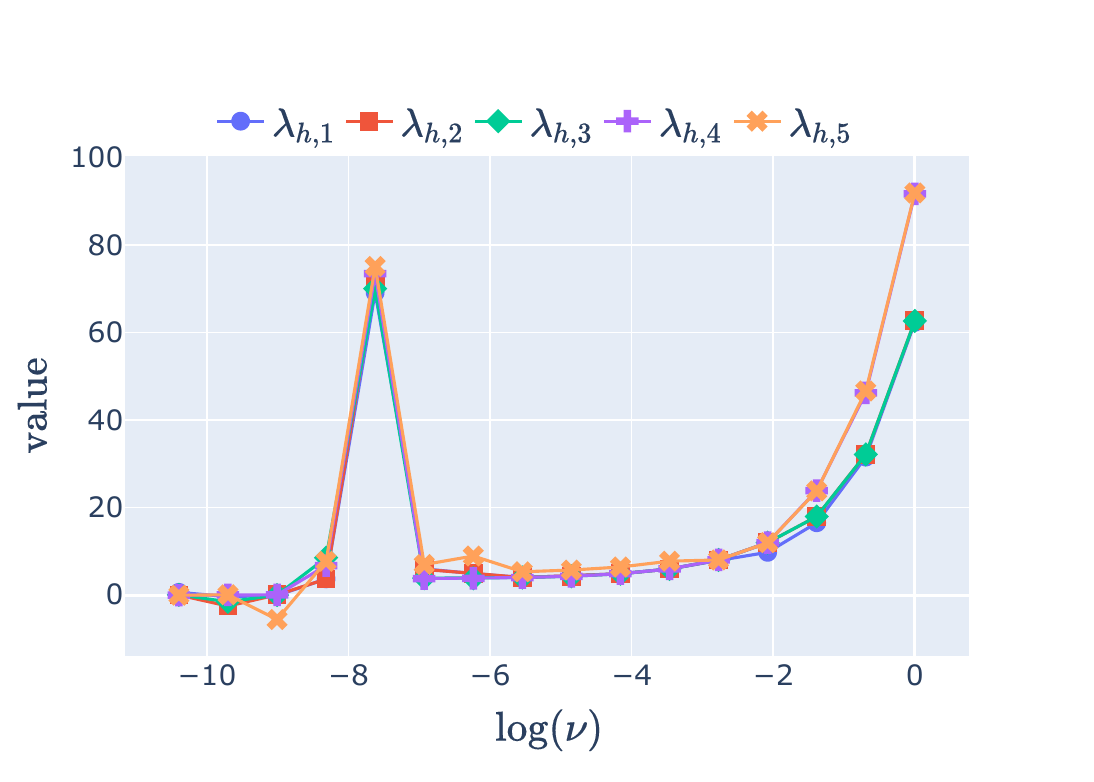}
	\end{minipage}
	\begin{minipage}{0.49\linewidth}
		\includegraphics[scale=0.35, trim=0cm 0cm 1.8cm 1.2cm, clip]{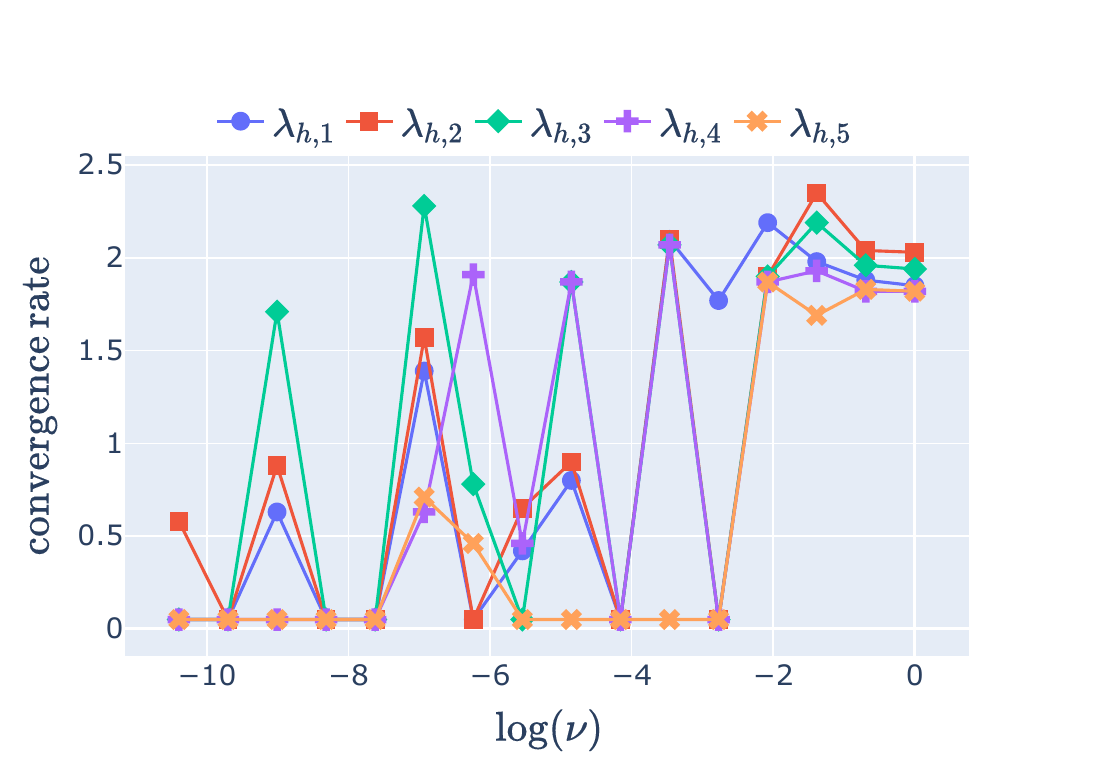}
	\end{minipage}\\
	\caption{Test \ref{subsec:robustness2D3D}. Behavior of the computed eigenvalues and the convergence rate with respect to the viscosity in the square domain (top row) and the unit cube domain (bottom row). Here, we consider $\Vert\boldsymbol{\beta}\Vert_{\infty,\Omega}=1$.  }
	\label{fig:robustnes_nonorm}
\end{figure}

\begin{figure}[!hbt]\centering
	\begin{minipage}{0.49\linewidth}
		\includegraphics[scale=0.35, trim=0cm 0cm 1.8cm 1.2cm, clip]{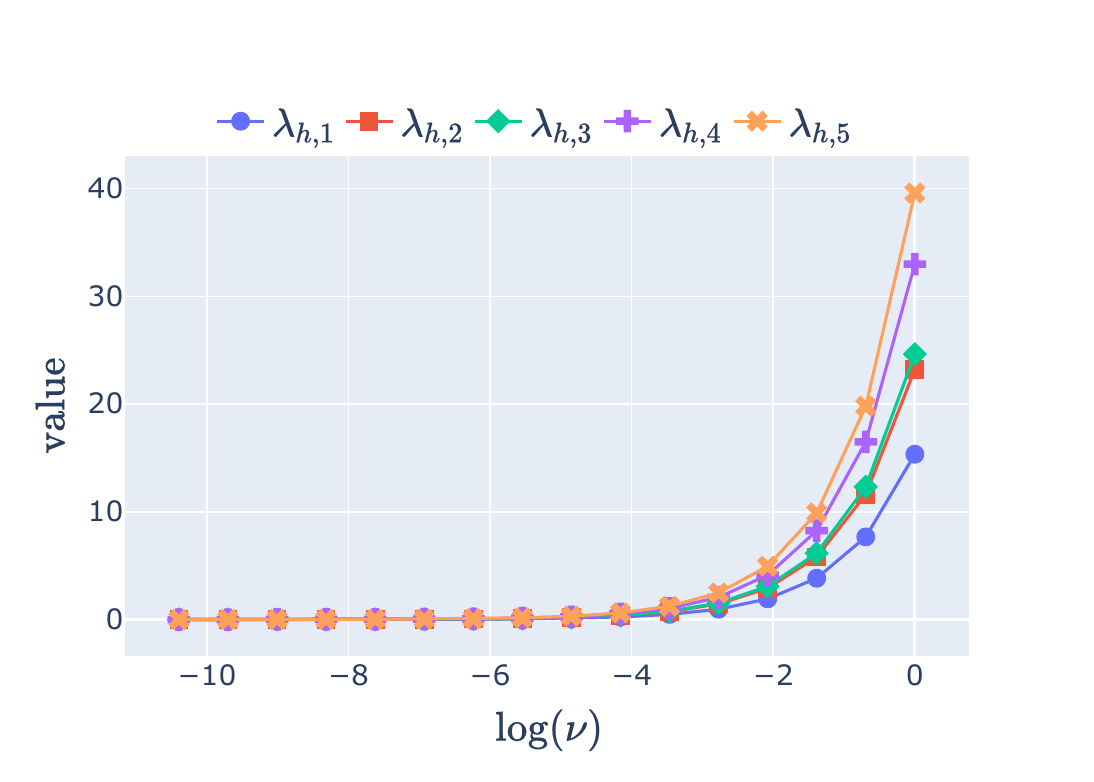}
	\end{minipage}
	\begin{minipage}{0.49\linewidth}
		\includegraphics[scale=0.35, trim=0cm 0cm 1.8cm 1.2cm, clip]{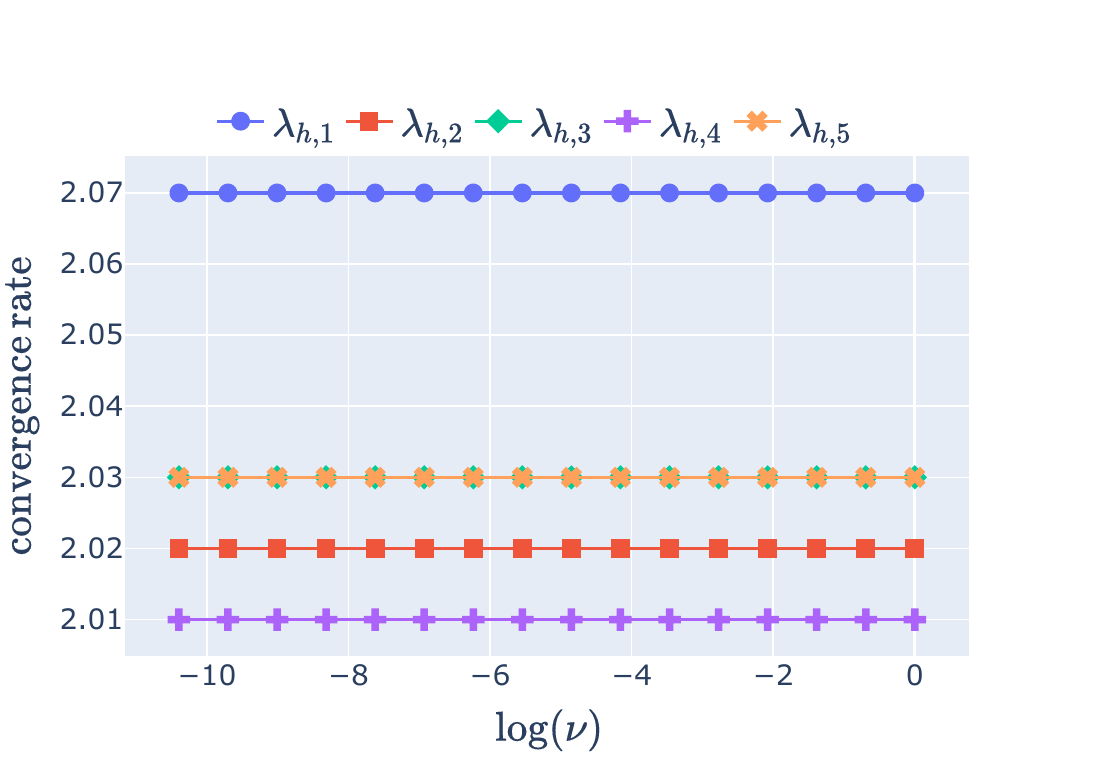}
	\end{minipage}\\
	\begin{minipage}{0.49\linewidth}
		\includegraphics[scale=0.35, trim=0cm 0cm 1.8cm 1.2cm, clip]{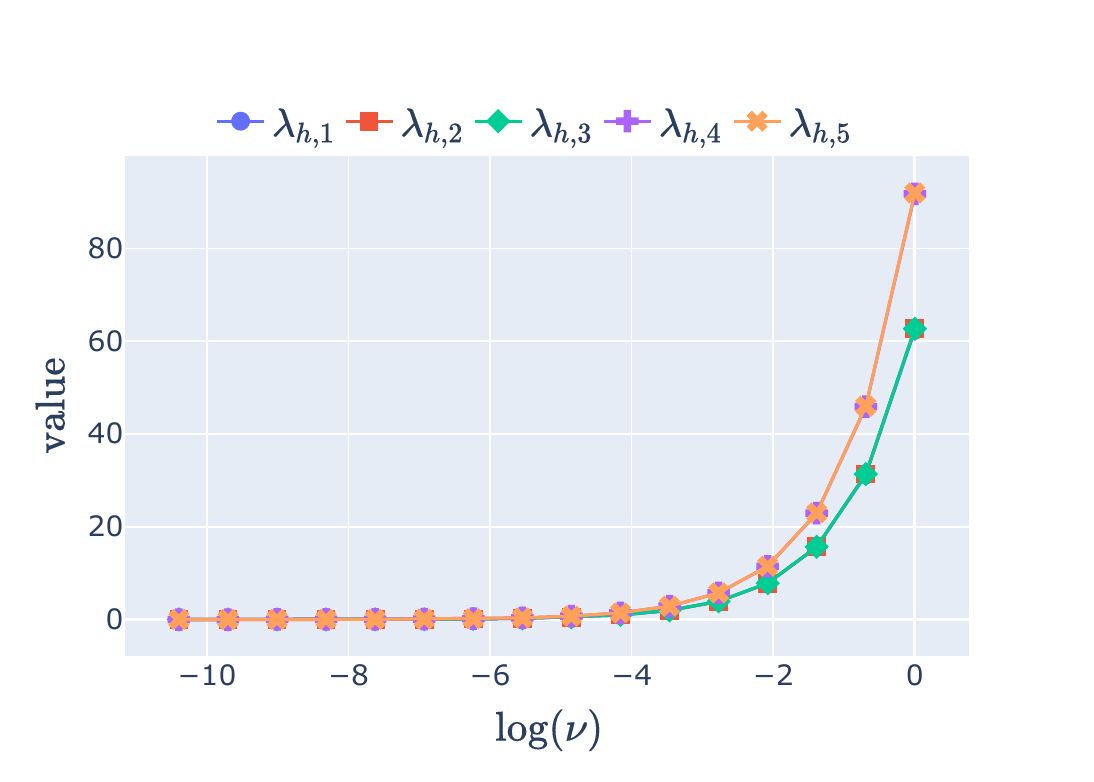}
	\end{minipage}
	\begin{minipage}{0.49\linewidth}
		\includegraphics[scale=0.35, trim=0cm 0cm 1.8cm 1.2cm, clip]{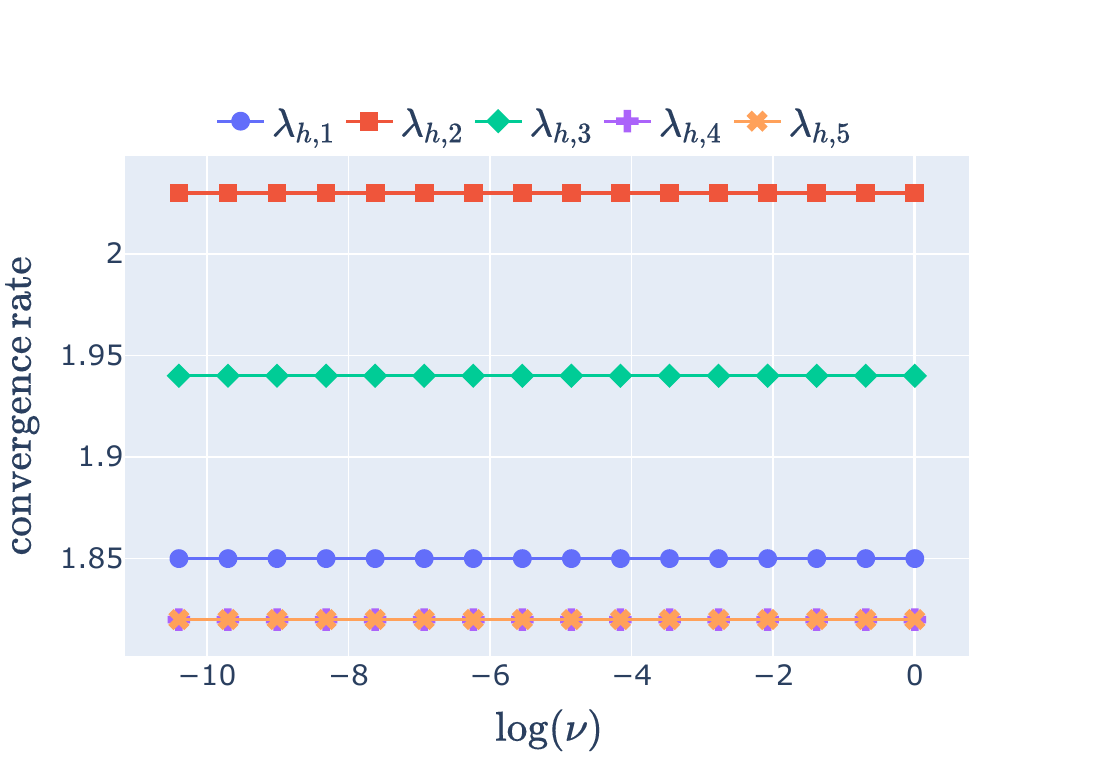}
	\end{minipage}\\
	\caption{Test \ref{subsec:robustness2D3D}. Behavior of the computed eigenvalues and the convergence rate with respect to the viscosity in the square domain (top row) and the unit cube domain (bottom row). Here, we consider $ \Vert\boldsymbol{\beta}\Vert_{\infty,\Omega}=\nu$. }
	\label{fig:robustnes_norm}
\end{figure}

\subsection{A 3D benchmark}\label{subsec:3D-benchmark}
We conclude the numerical section with a benchmark designed to analyze the computational cost of the proposed method. Specifically, we focus on the cube domain from Section \ref{subsec:cube-domain3D} with the convective velocity set as $\boldsymbol{\beta} = (1, 0, 0)^\texttt{t}$. The performance metrics we measure include assembly time, solve time, non-zero percentage, and floating point operations (flops). Assembly time refers to the time taken to assemble the PETSc matrix representing the left-hand side of the linear system. Solve time is measured as the time SLEPc takes to compute the four lowest eigenvalues. Non-zero percentage is calculated based on the matrix size and its sparsity pattern. Flops represent an estimate of the total double precision operations (DPOPS) executed by the MUMPS linear solver.

We compare the computations with respect to \cite{LEPE2024116959} using Taylor-Hood (TH) and mini element (MINI). For the present study, RT (resp. BDM) denotes the method using Raviart-Thomas (resp. Brezzi-Douglas-Marini) elements.

The results are reported in Figure \ref{fig:benchmark-3D}. We observe that the fastest assembly and solve time are measured when using RT elements, while the most expensive, up to a certain number of degrees of freedom, is the scheme that uses the TH pair. We note that solve time is almost on par with the MINI element. The non-zero percentage is smaller for the proposed scheme due to an overall higher $\texttt{dof}$. Finally, we note that BDM and TH are the schemes that take the most DPOPS per mesh size, while RT and MINI elements are the most efficient ones.
\begin{figure}[!hbt]
	\centering
	\begin{minipage}{0.49\linewidth}
		\includegraphics[scale=0.35, trim=0cm 0cm 1.8cm 1.2cm, clip]{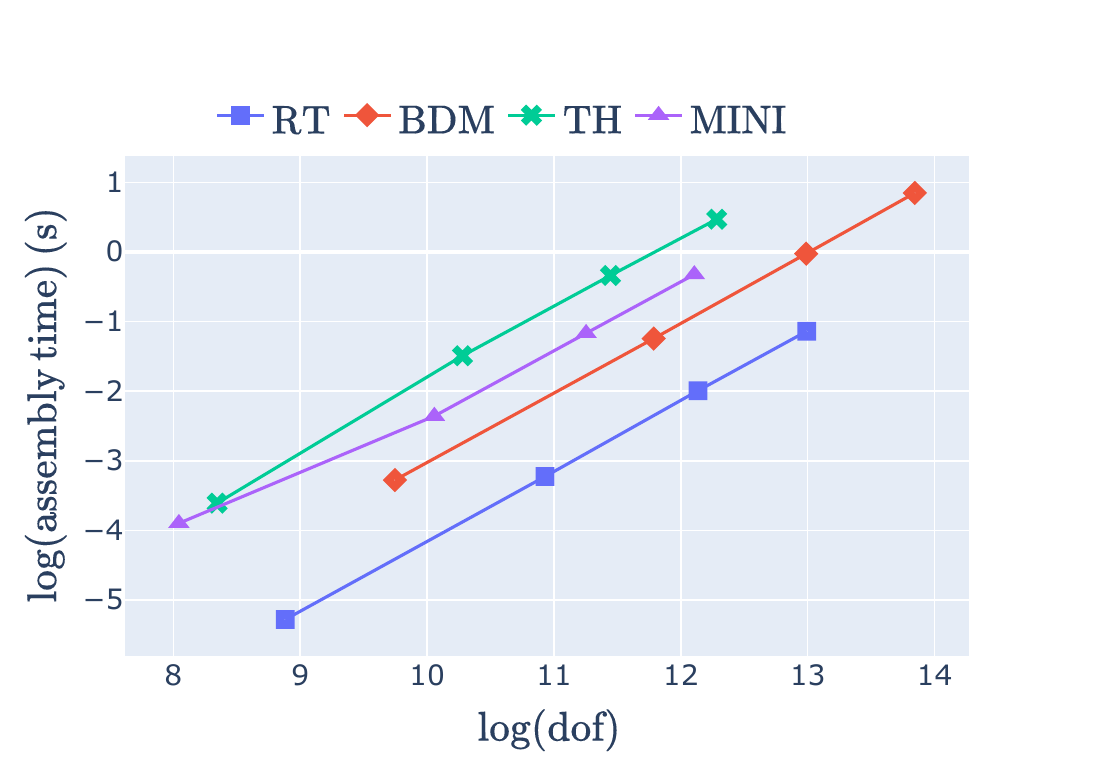}
	\end{minipage}
	\begin{minipage}{0.49\linewidth}
		\includegraphics[scale=0.35, trim=0cm 0cm 1.8cm 1.2cm, clip]{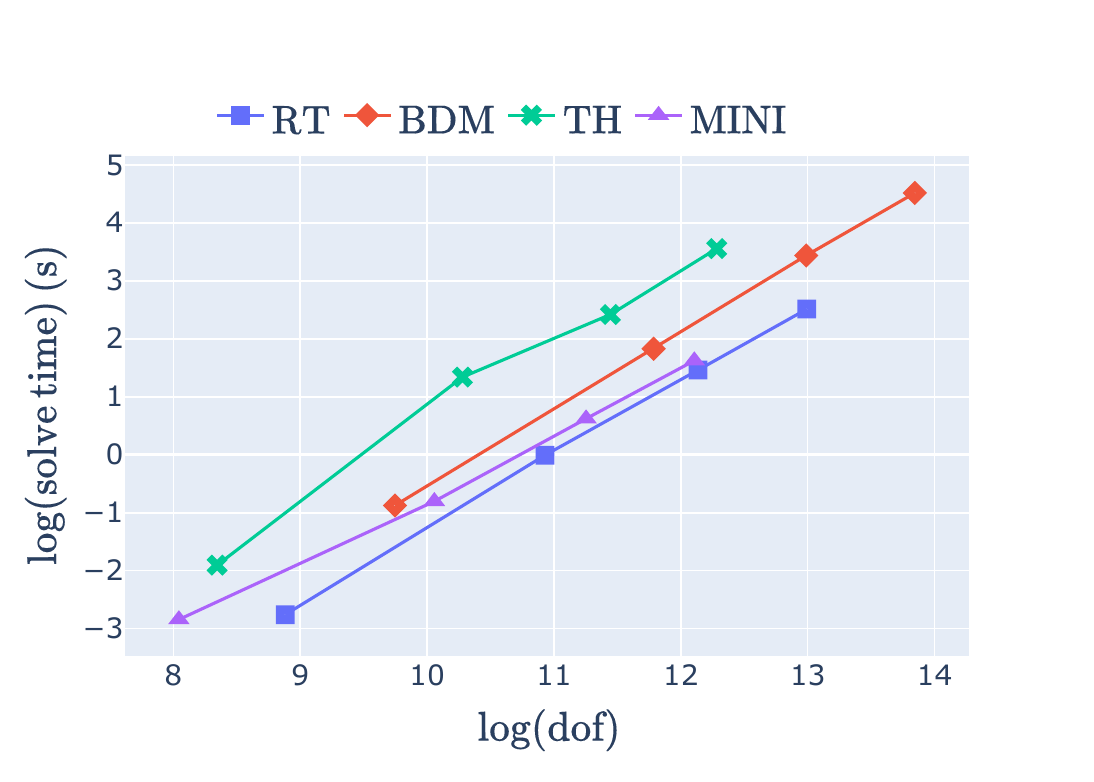}
	\end{minipage}\\
	\begin{minipage}{0.49\linewidth}
		\includegraphics[scale=0.35, trim=0cm 0cm 1.8cm 1.2cm, clip]{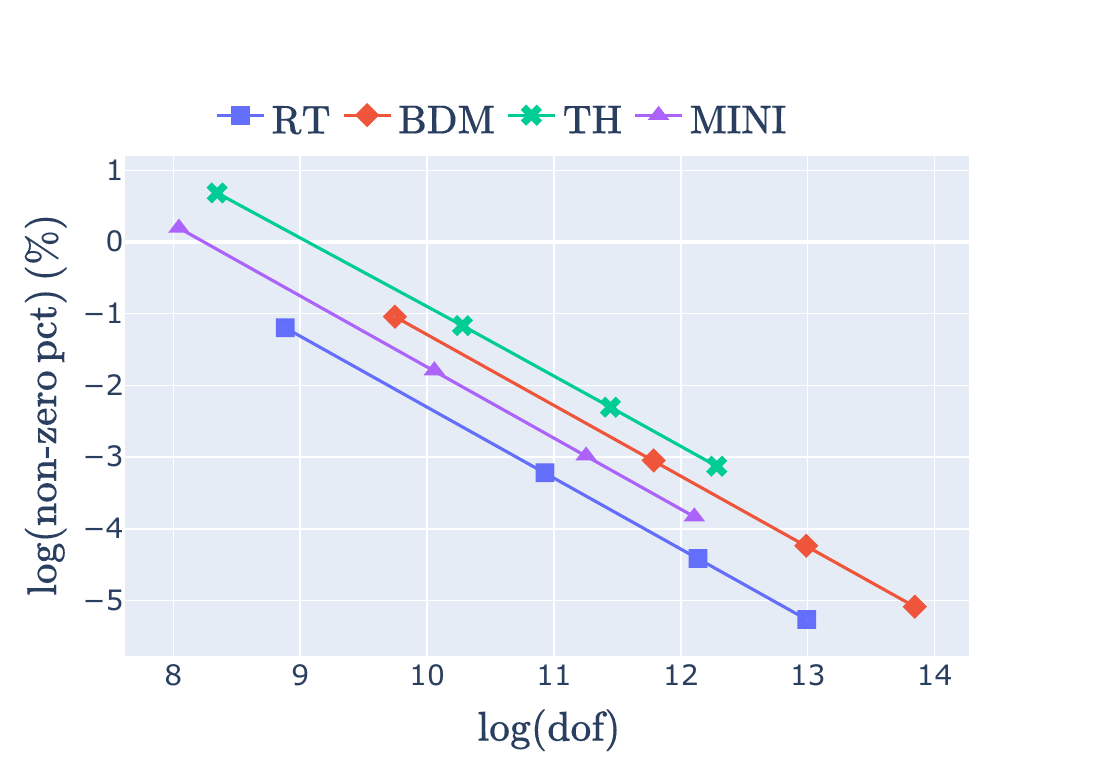}
	\end{minipage}
	\begin{minipage}{0.49\linewidth}
		\includegraphics[scale=0.35, trim=0cm 0cm 1.8cm 1.2cm, clip]{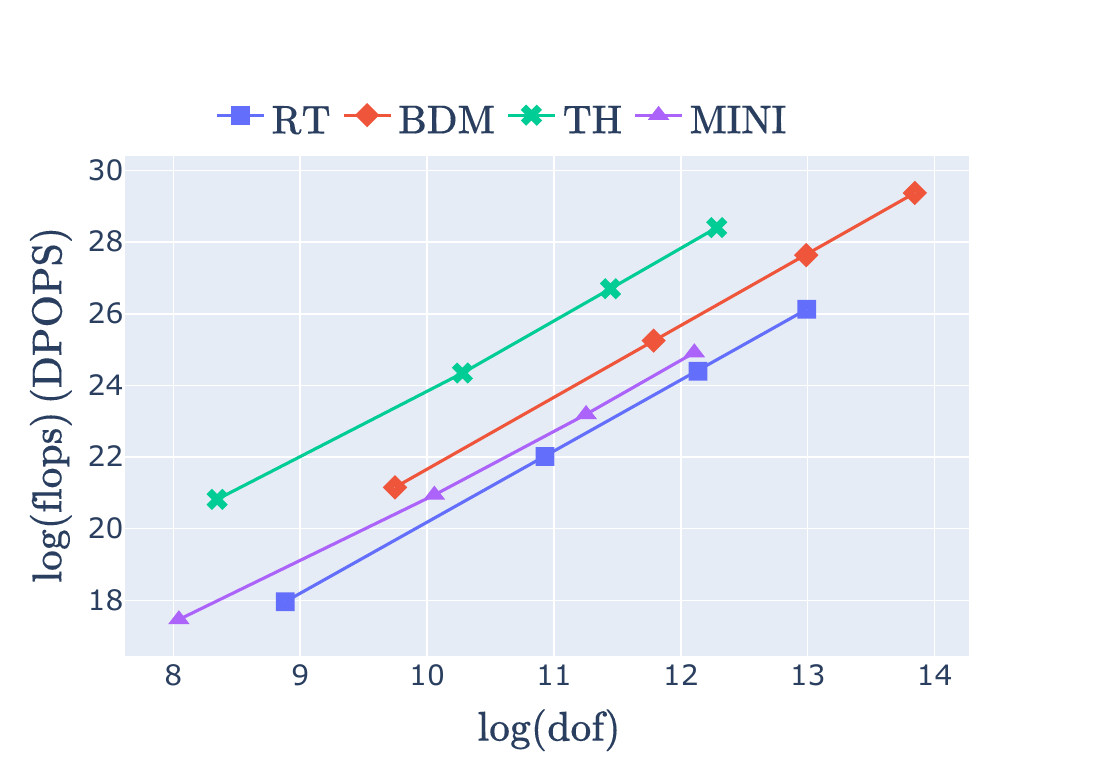}
	\end{minipage}\\
	\caption{Test \ref{subsec:3D-benchmark}. Comparison of computational cost related measures in the eigenvalue problem on the unit
		cube domain. We consider the field $\boldsymbol{\beta}=(1,0,0)^{\texttt{t}}$. }
	\label{fig:benchmark-3D}
\end{figure}

\bibliographystyle{siamplain}
\bibliography{oseen-eigenvalue}

\begin{thebibliography}{10}

\bibitem{MR1115235}
{\sc I.~Babu\v{s}ka and J.~Osborn}, {\em Handbook of numerical analysis. {V}ol.
  {II}},  (1991), pp.~x+928.
\newblock Finite element methods. Part 1.

\bibitem{barrata2023dolfinx}
{\sc I.~A. Barrata, J.~P. Dean, J.~S. Dokken, M.~HABERA, J.~HALE,
  C.~Richardson, M.~E. Rognes, M.~W. Scroggs, N.~Sime, and G.~N. Wells}, {\em
  {DOLFINx}: The next generation fenics problem solving environment},  (2023),
  \url{https://doi.org/10.5281/zenodo.10447666}.

\bibitem{MR2652780}
{\sc D.~Boffi}, {\em Finite element approximation of eigenvalue problems}, Acta
  Numer., 19 (2010), pp.~1--120,
  \url{https://doi.org/10.1017/S0962492910000012}.

\bibitem{MR3097958}
{\sc D.~Boffi, F.~Brezzi, and M.~Fortin}, {\em Mixed finite element methods and
  applications}, vol.~44 of Springer Series in Computational Mathematics,
  Springer, Heidelberg, 2013, \url{https://doi.org/10.1007/978-3-642-36519-5}.

\bibitem{MR799685}
{\sc F.~Brezzi, J.~Douglas, Jr., and L.~D. Marini}, {\em Two families of mixed
  finite elements for second order elliptic problems}, Numer. Math., 47 (1985),
  pp.~217--235, \url{https://doi.org/10.1007/BF01389710}.

\bibitem{MR1115205}
{\sc F.~Brezzi and M.~Fortin}, {\em Mixed and hybrid finite element methods},
  vol.~15 of Springer Series in Computational Mathematics, Springer-Verlag, New
  York, 1991, \url{https://doi.org/10.1007/978-1-4612-3172-1}.

\bibitem{MR3535625}
{\sc P.~Bringmann, C.~Carstensen, and C.~Merdon}, {\em Guaranteed velocity
  error control for the pseudostress approximation of the {S}tokes equations},
  Numer. Methods Partial Differential Equations, 32 (2016), pp.~1411--1432,
  \url{https://doi.org/10.1002/num.22056}.

\bibitem{cai2010}
{\sc Z.~Cai, C.~Tong, P.~S. Vassilevski, and C.~Wang}, {\em Mixed finite
  element methods for incompressible flow: stationary {S}tokes equations},
  Numer. Methods Partial Differential Equations, 26 (2010), pp.~957--978,
  \url{https://doi.org/10.1002/num.20467}.

\bibitem{MR4307023}
{\sc J.~Cama\~{n}o, C.~Garc\'{\i}a, and R.~Oyarz\'{u}a}, {\em Analysis of a
  momentum conservative mixed-{FEM} for the stationary {N}avier-{S}tokes
  problem}, Numer. Methods Partial Differential Equations, 37 (2021),
  pp.~2895--2923, \url{https://doi.org/10.1002/num.22789}.

\bibitem{MR4434148}
{\sc X.~Chen and Y.~Li}, {\em Superconvergent pseudostress-velocity finite
  element methods for the {O}seen equations}, J. Sci. Comput., 92 (2022),
  pp.~Paper No. 17, 27, \url{https://doi.org/10.1007/s10915-022-01856-1}.

\bibitem{MR4593742}
{\sc C.~I. Correa, G.~N. Gatica, and R.~Ruiz-Baier}, {\em New mixed finite
  element methods for the coupled {S}tokes and {P}oisson-{N}ernst-{P}lanck
  equations in {B}anach spaces}, ESAIM Math. Model. Numer. Anal., 57 (2023),
  pp.~1511--1551, \url{https://doi.org/10.1051/m2an/2023024}.

\bibitem{MR2050138}
{\sc A.~Ern and J.-L. Guermond}, {\em Theory and practice of finite elements},
  vol.~159 of Applied Mathematical Sciences, Springer-Verlag, New York, 2004,
  \url{https://doi.org/10.1007/978-1-4757-4355-5}.

\bibitem{MR3453481}
{\sc G.~N. Gatica, L.~F. Gatica, and F.~A. Sequeira}, {\em A priori and a
  posteriori error analyses of a pseudostress-based mixed formulation for
  linear elasticity}, Comput. Math. Appl., 71 (2016), pp.~585--614,
  \url{https://doi.org/10.1016/j.camwa.2015.12.009}.

\bibitem{MR4627698}
{\sc G.~N. Gatica, C.~Inzunza, and F.~A. Sequeira}, {\em New {B}anach
  spaces-based fully-mixed finite element methods for pseudostress-assisted
  diffusion problems}, Appl. Numer. Math., 193 (2023), pp.~148--178,
  \url{https://doi.org/10.1016/j.apnum.2023.07.017}.

\bibitem{geuzaine2009gmsh}
{\sc C.~Geuzaine and J.-F. Remacle}, {\em Gmsh: A 3-{D} finite element mesh
  generator with built-in pre-and post-processing facilities}, International
  journal for numerical methods in engineering, 79 (2009), pp.~1309--1331.

\bibitem{MR4789346}
{\sc Z.~Gharibi and M.~Dehghan}, {\em Analysis of {W}eak {G}alerkin {M}ixed
  {F}inite {E}lement {M}ethod {B}ased on the {V}elocity--{P}seudostress
  {F}ormulation for {N}avier--{S}tokes {E}quation on {P}olygonal {M}eshes}, J.
  Sci. Comput., 101 (2024), p.~Paper No. 12,
  \url{https://doi.org/10.1007/s10915-024-02651-w}.

\bibitem{hernandez2005slepc}
{\sc V.~Hernandez, J.~E. Roman, and V.~Vidal}, {\em {SLEPc}: A scalable and
  flexible toolkit for the solution of eigenvalue problems}, ACM Transactions
  on Mathematical Software (TOMS), 31 (2005), pp.~351--362.

\bibitem{MR2009375}
{\sc R.~Hiptmair}, {\em Finite elements in computational electromagnetism},
  Acta Numer., 11 (2002), pp.~237--339,
  \url{https://doi.org/10.1017/S0962492902000041}.

\bibitem{MR4570534}
{\sc D.~Inzunza, F.~Lepe, and G.~Rivera}, {\em Displacement-pseudostress
  formulation for the linear elasticity spectral problem}, Numer. Methods
  Partial Differential Equations, 39 (2023), pp.~1996--2017,
  \url{https://doi.org/10.1002/num.22955}.

\bibitem{John2016}
{\sc V.~John}, {\em Finite element methods for incompressible flow problems},
  vol.~51 of Springer Series in Computational Mathematics, Springer, Cham,
  2016, \url{https://doi.org/10.1007/978-3-319-45750-5}.

\bibitem{MR0203473}
{\sc T.~Kato}, {\em Perturbation theory for linear operators}, Springer-Verlag,
  Berlin, 1995.
\newblock Reprint of the 1980 edition.

\bibitem{MR4480275}
{\sc D.~Kim, E.-J. Park, and B.~Seo}, {\em Error analysis for the pseudostress
  formulation of unsteady {S}tokes problem}, Numer. Algorithms, 91 (2022),
  pp.~959--996, \url{https://doi.org/10.1007/s11075-022-01288-w}.

\bibitem{MR4077220}
{\sc F.~Lepe and D.~Mora}, {\em Symmetric and nonsymmetric discontinuous
  {G}alerkin methods for a pseudostress formulation of the {S}tokes spectral
  problem}, SIAM J. Sci. Comput., 42 (2020), pp.~A698--A722,
  \url{https://doi.org/10.1137/19M1259535}.

\bibitem{MR4430561}
{\sc F.~Lepe, G.~Rivera, and J.~Vellojin}, {\em Mixed methods for the
  velocity-pressure-pseudostress formulation of the {S}tokes eigenvalue
  problem}, SIAM J. Sci. Comput., 44 (2022), pp.~A1358--A1380,
  \url{https://doi.org/10.1137/21M1402959}.

\bibitem{LRVSISC}
{\sc F.~Lepe, G.~Rivera, and J.~Vellojin}, {\em Mixed methods for the
  velocity-pressure-pseudostress formulation of the {S}tokes eigenvalue
  problem}, SIAM Journal on Scientific Computing, 44 (2022), pp.~A1358--A1380,
  \url{https://doi.org/10.1137/21M1402959}.

\bibitem{MR4471016}
{\sc F.~Lepe, G.~Rivera, and J.~Vellojin}, {\em A posteriori analysis for a
  mixed {FEM} discretization of the linear elasticity spectral problem}, J.
  Sci. Comput., 93 (2022), pp.~Paper No. 10, 25,
  \url{https://doi.org/10.1007/s10915-022-01972-y}.

\bibitem{LEPE2024116959}
{\sc F.~Lepe, G.~Rivera, and J.~Vellojin}, {\em Finite element analysis of the
  {O}seen eigenvalue problem}, Computer Methods in Applied Mechanics and
  Engineering, 425 (2024), p.~116959,
  \url{https://doi.org/https://doi.org/10.1016/j.cma.2024.116959}.

\bibitem{MR4666864}
{\sc F.~Lepe and J.~Vellojin}, {\em A posteriori analysis for a mixed
  formulation of the {S}tokes spectral problem}, Calcolo, 60 (2023), pp.~Paper
  No. 52, 28.

\bibitem{MR3335223}
{\sc S.~Meddahi, D.~Mora, and R.~Rodr\'iguez}, {\em A finite element analysis
  of a pseudostress formulation for the {S}tokes eigenvalue problem}, IMA J.
  Numer. Anal., 35 (2015), pp.~749--766,
  \url{https://doi.org/10.1093/imanum/dru006}.

\bibitem{MR1115239}
{\sc J.~E. Roberts and J.-M. Thomas}, {\em Mixed and hybrid methods}, in
  Handbook of numerical analysis, {V}ol. {II}, Handb. Numer. Anal., II,
  North-Holland, Amsterdam, 1991, pp.~523--639.

\bibitem{scroggs2022basix}
{\sc M.~W. Scroggs, I.~A. Baratta, C.~N. Richardson, and G.~N. Wells}, {\em
  Basix: a runtime finite element basis evaluation library}, Journal of Open
  Source Software, 7 (2022), p.~3982.

\end{thebibliography}
\end{document}